\DeclareSymbolFont{cyrletters}{OT2}{wncyr}{m}{n}
\DeclareMathSymbol{\Sha}{\mathalpha}{cyrletters}{"58}
\theoremstyle{plain}
\newtheorem{thm}{Theorem}[section]
\newtheorem{lem}[thm]{Lemma}
\newtheorem{prop}[thm]{Proposition}
\newtheorem{cor}[thm]{Corollary}
\theoremstyle{definition}
\newtheorem{defn}[thm]{Definition}
\theoremstyle{remark}
\newtheorem{rem}[thm]{Remark}
\newcommand{\FF}{{\mathbb F}}
\newcommand{\ZZ}{{\mathbb Z}}
\newcommand{\QQ}{{\mathbb Q}}
\newcommand{\CC}{{\mathbb C}}
\newcommand{\PP}{{\mathbb P}}
\newcommand{\TT}{{\mathbb T}}
\newcommand{\afk}{\mathfrak{a}}
\newcommand{\dfk}{\mathfrak{d}}
\newcommand{\mfk}{\mathfrak{m}}
\newcommand{\nfk}{\mathfrak{n}}
\newcommand{\pfk}{\mathfrak{p}}
\newcommand{\qfk}{\mathfrak{q}}
\newcommand{\Gcal}{\mathcal{G}}
\newcommand{\Hcal}{\mathcal{H}}
\newcommand{\Scal}{\mathcal{S}}
\newcommand{\Ical}{\mathcal{I}}
\newcommand{\Ccal}{\mathcal{C}}
\newcommand{\sT}{\mathscr{T}}
\newcommand{\ord}{\operatorname{ord}}
\newcommand{\GL}{\operatorname{GL}}
\newcommand{\SL}{\operatorname{SL}}
\newcommand{\PGL}{\operatorname{PGL}}
\DeclareMathOperator{\End}{End}
\newcommand{\Stab}{\operatorname{Stab}}
\newcommand\subfrac[2]{\genfrac{}{}{0pt}{}{#1}{#2}}
\numberwithin{equation}{section}
\begin{document}
\title{Sturm-type bounds for modular forms over function fields}

\author{Cécile Armana}
\address{Laboratoire de Mathématiques de Besançon, Université Bourgogne Franche-Comté, CNRS UMR-6623. 16, route de Gray
25030 Besançon Cedex, France.}
\email{cecile.armana@univ-fcomte.fr}

\author{Fu-Tsun Wei}
\address{Department of Mathematics, National Tsing Hua University, No.\ 101, Section 2, Kuang-Fu Road, Hsinchu City 30013, Taiwan}
\email{ftwei@math.nthu.edu.tw}

\subjclass[2010]{11F25, 11F30, 11F41, 11F52, 11R58}
\keywords{Function Field, Bruhat-Tits Tree, Harmonic Cochain, Drinfeld Modular Form, Hecke operator, Sturm Bound} 

\begin{abstract}
In this paper, we obtain two analogues of the Sturm bound for modular forms in the function field setting. In the case of mixed characteristic, we prove that any harmonic cochain is uniquely determined by an explicit finite number of its first Fourier coefficients where our bound is much smaller than the ones in the literature. A similar bound is derived for generators of the Hecke algebra on harmonic cochains. As an application, we present a computational criterion for checking whether two elliptic curves over the rational function field $\FF_q(\theta)$ with same conductor are isogenous. In the case of equal characteristic, we also prove that any Drinfeld modular form is uniquely determined by an explicit finite number of its first coefficients in the $t$-expansion.
\end{abstract}

\maketitle

\section*{Introduction}

The Sturm bound provides a sufficient condition for classical modular forms to be identically zero.

\begin{thm}\label{thm: classical}
\text{\rm (Sturm \cite{S}, see also \cite[Cor.~2.3.4]{M} and \cite[Chap.~9]{Stein})} Let $k$ and $N$ be positive integers. Given a modular form $f$ of weight $k$ for the congruence subgroup $\Gamma_0(N)$, consider its Fourier expansion $f(z) = \sum_{n=0}^\infty c_n(f)e^{2\pi i n z}$. Then $f$ is identically zero if $$c_n(f) = 0 \quad \text{ for } 0\leq n \leq [\SL_2(\ZZ): \Gamma_0(N)] \cdot \frac{k}{12}.$$
Moreover, if $f$ is a cusp form, then $f$ is identically zero if
$$c_n(f) = 0 \quad \text{ for }  0\leq n \leq [\SL_2(\ZZ):\Gamma_0(N)]\cdot \left(\frac{k}{12} - \frac{1}{N}\right) + \frac{1}{N}.$$
\end{thm}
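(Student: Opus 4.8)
The plan is to reduce the level-$N$ statement to the full modular group $\SL_2(\ZZ)$, where the valence (or $k/12$) formula gives an exact count of zeros, and then to read off the vanishing of $f$ from the vanishing of its first Fourier coefficients. Throughout, write $m = [\SL_2(\ZZ):\Gamma_0(N)]$ and fix coset representatives $\gamma_1 = I, \gamma_2, \dots, \gamma_m$ for $\Gamma_0(N)\backslash \SL_2(\ZZ)$. For a nonzero holomorphic modular form $h$ of weight $\kappa$ on $\SL_2(\ZZ)$ I will use the valence formula in the form
$$\ord_\infty(h) + \frac{1}{2}\ord_i(h) + \frac{1}{3}\ord_\rho(h) + \sum_{P} \ord_P(h) = \frac{\kappa}{12},$$
where $\ord_\infty$ is measured in the local parameter $q = e^{2\pi i z}$, the sum runs over the remaining points of $\SL_2(\ZZ)\backslash \HH$, and $\rho = e^{2\pi i/3}$. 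Since every term on the left is nonnegative, this yields the one inequality I need: $\ord_\infty(h) \le \kappa/12$.

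First I would form the norm
$$g \;=\; \prod_{j=1}^m f|_k\gamma_j,$$
and check that $g$ is a holomorphic modular form of weight $mk$ on $\SL_2(\ZZ)$: right multiplication by any $\delta \in \SL_2(\ZZ)$ sends $\gamma_j\delta$ into $\Gamma_0(N)\gamma_{\sigma(j)}$ for a permutation $\sigma$, so by the $\Gamma_0(N)$-invariance of $f$ the factors are merely permuted and $g|_{mk}\delta = g$; holomorphy on $\HH$ and at the cusp follows from the corresponding properties of the finitely many factors. If $f \ne 0$ then $g \ne 0$, since the factors are nonzero holomorphic functions on the connected domain $\HH$. Consequently $g$ admits a $q$-expansion in integral powers of $q$, and because one factor is $f|_k\gamma_1 = f$ while the others are holomorphic at $\infty$,
$$\ord_\infty(g) \;=\; \sum_{j=1}^m \ord_\infty(f|_k\gamma_j) \;\ge\; \ord_\infty(f).$$

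Now if $c_n(f) = 0$ for all $0 \le n \le mk/12$, then $\ord_\infty(f) \ge mk/12 + 1$, so the valence formula applied to $g$ gives $\ord_\infty(g) \ge mk/12 + 1 > mk/12 \ge \ord_\infty(g)$, a contradiction that forces $g = 0$, hence $f = 0$; this proves the first bound. For the cusp-form refinement I would exploit that \emph{each} translate $f|_k\gamma_j$ vanishes at $\infty$: indeed $f$ vanishes at every cusp of $\Gamma_0(N)$, and $\gamma_j^{-1}\infty$ is such a cusp, of width $h_j$ dividing $N$. Writing $f|_k\gamma_j$ in its local parameter $e^{2\pi i z/h_j}$ shows $\ord_\infty(f|_k\gamma_j) \ge 1/h_j \ge 1/N$ for each of the $m-1$ indices $j \ge 2$, whence
$$\frac{mk}{12} \;\ge\; \ord_\infty(g) \;\ge\; \ord_\infty(f) + \frac{m-1}{N}.$$
If $c_n(f)=0$ for $0 \le n \le m(k/12 - 1/N) + 1/N = mk/12 - (m-1)/N$, then $\ord_\infty(f) \ge mk/12 - (m-1)/N + 1$, and the displayed inequality again produces $mk/12 \ge mk/12 + 1$, forcing $f = 0$.

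The main obstacle is not any single estimate but the bookkeeping required to make the norm construction rigorous: verifying that $g$ is genuinely $\SL_2(\ZZ)$-invariant of weight $mk$ with an honest integral $q$-expansion, and, for the refinement, tracking the cusp widths $h_j$ together with the orders of vanishing in the correct local parameters. One must also dispose of the parity edge case (if $mk$ is odd then $g = 0$ automatically, but then $f = 0$ as well, so the conclusion persists) and confirm that the normalization of the valence formula matches the convention $m = [\SL_2(\ZZ):\Gamma_0(N)] = [\PSL_2(\ZZ):\overline{\Gamma_0(N)}]$ in the statement, which is legitimate since $-I \in \Gamma_0(N)$.
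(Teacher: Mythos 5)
Your proof is correct and follows the standard valence-formula argument that the paper cites for this classical statement and reproduces almost verbatim in its proof of the Drinfeld analogue (Theorem~\ref{thm: SBound.D}): form the norm $\prod_j f|_k\gamma_j$ over coset representatives, reduce to level one, and bound $\ord_\infty$ by $\kappa/12$. The only blemish is cosmetic: the cusp governing the expansion of $f|_k\gamma_j$ at $\infty$ is $\gamma_j\infty$ rather than $\gamma_j^{-1}\infty$, which does not affect the estimate $\ord_\infty(f|_k\gamma_j)\geq 1/h_j\geq 1/N$.
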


Let $m$ denote the dimension of the $\CC$-vector space $M_k(\Gamma_0(N))$ of weight-$k$ modular forms for $\Gamma_0(N)$. Comparing the former bound with $m$, the theorem says that a given modular form in $M_k(\Gamma_0(N))$ is uniquely  determined by only slightly more than its first $m$ Fourier coefficients. Such bounds have many theoretical and computational applications, in particular they are widely used in algorithms for computing with modular forms. 

Let $\TT_k(N)$ be the Hecke algebra acting on the space $S_k(\Gamma_0(N))$ of weight-$k$ cusp forms for $\Gamma_0(N)$. It is well-known that the first Fourier coefficient provides a perfect pairing between $\TT_k(N)$ and $S_k(\Gamma_0(N))$. As a consequence, one can derive from Theorem~\ref{thm: classical} an explicit bound for the number of Hecke operators generating  $\TT_k(N)$ (cf.~\cite{Stein}). Moreover, combined with the modularity theorem for elliptic curves over $\QQ$, the Sturm bound can be used to check efficiently whether two elliptic curves over $\QQ$ are isogenous. Note also that the statement of Theorem~\ref{thm: classical} is the version at the \lq\lq generic prime\rq\rq: the bound actually holds as well at every \lq\lq closed prime\rq\rq\ for arithmetic modular forms and is essential in the study of their congruence relations. 

Recently, Sturm-type bounds for Hilbert modular forms and Siegel modular forms have been the subject of several investigations. The aim of this paper is to give an attempt on studying the generic version of this question for modular forms over function fields, in both cases of mixed and equal characteristic.

\subsection{Mixed characteristic setting}

Let $K := \FF_q(\theta)$ be the rational function field with one variable $\theta$ over a finite field $\FF_q$ with $q$ elements.
Let $K_\infty$ be the completion of $K$ with respect to the infinite place $\infty$. Put $A := \FF_q[\theta]$ and denote by $A_+$ the set of monic polynomials in $A$.

A combinatorial analogue of the complex upper half plane in this setting is the Bruhat-Tits tree $\sT$ associated to $\PGL_2(K_\infty)$.
We are interested in \emph{harmonic cochains}, also called Drinfeld-type automorphic forms, which are functions on the set of the oriented edges of $\sT$ satisfying the so-called \emph{harmonicity property.}
Harmonic cochains, which can be viewed as analogue to classical weight-$2$ modular forms, are objects of great interest in the study of function field arithmetic (for instance cf. \cite{G-R}, \cite{R-T}, \cite{Wei}, \cite{W-Y}).
Moreover let $\Gamma_0(\nfk)$ be the Hecke congruence subgroup of $\GL_2(A)$ for a given $\nfk \in A_+$. The space of $\Gamma_0(\nfk)$-invariant $\CC$-valued harmonic cochains is denoted by $\Hcal(\nfk)$. Every $f$ in $\Hcal(\nfk)$ admits a unique Fourier expansion with coefficients $c_0(f)$ and $(c_\mfk(f))_{\mfk \in A_+}$.

Classical Sturm bounds may be proved using the so-called valence formula for modular forms. Since no such formula is available for harmonic cochains, a more natural approach is to use a fundamental domain of the quotient graph $\Gamma_0(\nfk) \backslash \sT$, as we describe in Theorem~\ref{thm: Quo-G-dec} for instance. Our first bound is stated in terms of the arithmetic quantity $\tau(\nfk)$ introduced in \text{\rm Definition~\ref{defn: d(n)}}.

\begin{thm}\label{thm: 0.3}
Given $\nfk \in A_+$, let $f \in \Hcal(\nfk)$. Then $f$ is identically zero if $c_\mfk(f) = 0$ for all $\mfk \in A_+$ with
$$
\deg \mfk \leq \deg \nfk -1 + 2 \tau(\nfk).
$$
\end{thm}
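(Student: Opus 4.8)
The plan is to deduce the vanishing of $f$ from the vanishing of its values on the edges of a fundamental domain for $\Gamma_0(\nfk)\backslash\sT$, and to show that the prescribed Fourier coefficients already determine all such values. Since $f$ is $\Gamma_0(\nfk)$-invariant and harmonic, it is enough to prove that $f$ vanishes on every edge represented in the fundamental domain. First I would invoke the explicit description of this fundamental domain from Theorem~\ref{thm: Quo-G-dec}, which realizes $\Gamma_0(\nfk)\backslash\sT$ as a finite \emph{core} together with finitely many half-infinite rays, one for each cusp of $\Gamma_0(\nfk)$, i.e.\ each $\Gamma_0(\nfk)$-orbit on $\PP^1(K)$. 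The arithmetic quantity $\tau(\nfk)$ of Definition~\ref{defn: d(n)} should be read as controlling simultaneously the depth at which the rays attach and the size of the core.

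Next I would translate the hypothesis on Fourier coefficients into vanishing along the tree. The expansion is taken at the cusp $\infty$, and by Fourier inversion the value of $f$ on an edge of the standard apartment, or of the horocycles through it, at combinatorial depth $k$ is an explicit finite $\CC$-linear combination of $c_0(f)$ and of the $c_\mfk(f)$ with $\deg\mfk$ bounded in terms of $k$; harmonicity is precisely what makes this correspondence invertible and transports the expansion from one horocycle to the next. Hence $c_\mfk(f)=0$ for $\deg\mfk\le \deg\nfk-1+2\tau(\nfk)$ forces $f$ to vanish along the cusp at $\infty$ down to the corresponding depth. For the remaining cusps I would move each of them to $\infty$ by a suitable element of $\GL_2(K)$, compare its width against $\tau(\nfk)$, and conclude in the same way that $f$ vanishes on every cuspidal ray of the fundamental domain.

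It then remains to propagate this vanishing inward through the finite core, and here harmonicity does the work: at each vertex the values of $f$ on the incident edges sum to zero, so the value on any one edge is determined by the values on the others. Starting from the rays, where $f$ is already known to vanish sufficiently deep, one peels the core inward edge by edge; because the core is finite and its diameter, together with the attachment depths of the rays, is absorbed into $2\tau(\nfk)$, the vanishing reaches every edge of the fundamental domain, whence $f\equiv0$. This same propagation also recovers the constant term, so that no separate hypothesis on $c_0(f)$ is needed: once $f$ vanishes on the rays and the core, $c_0(f)=0$ follows automatically.

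The hard part will be the uniform treatment of the cusps other than $\infty$. The Fourier coefficients only record the expansion at $\infty$, so one must verify that conjugating an arbitrary cusp to $\infty$ never calls for Fourier data of degree exceeding $\deg\nfk-1+2\tau(\nfk)$; making this bookkeeping sharp, rather than merely finite, is exactly what pins down the definition of $\tau(\nfk)$ and what yields a bound smaller than those previously available. A secondary technical point is to confirm that the linear change of variables between edge-values and Fourier coefficients is genuinely invertible throughout the relevant degree range, which once more rests on the harmonicity property.
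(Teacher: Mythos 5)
Your overall strategy --- read the vanishing of $f$ near the cusp at infinity off the Fourier expansion, handle the other cusps by moving them to infinity, and then fill in the finite part of $\Gcal(\nfk)$ by harmonicity --- breaks down at two essential points, and it also misses the mechanism that actually produces the bound $\deg\nfk-1+2\tau(\nfk)$. First, the constant term: the hypothesis says nothing about $c_0(f)$, and since $f$ is not assumed cuspidal you cannot ignore it. The expansion \eqref{eqn: Four-exp} gives $f\left(\begin{smallmatrix}\pi_\infty^r&u\\0&1\end{smallmatrix}\right)=q^{-r+2}\bigl(c_0(f)+\sum c_\mfk(f)\Psi(\mfk u)\bigr)$, so even with all the hypothesized $c_\mfk(f)$ equal to zero these values are $q^{-r+2}c_0(f)$, not zero. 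Your claim that $c_0(f)=0$ ``follows automatically'' once $f$ vanishes is circular, because you need $c_0(f)=0$ to get that vanishing in the first place. The paper disposes of this with a separate argument (Lemma~\ref{lem: const}): a harmonicity relation applied to $f|W_\nfk$ at two explicit edges yields $(q^{-\deg\nfk+2}+q^{-\deg\nfk+1})c_0(f)=0$. Second, and more seriously, ``peeling the core inward edge by edge'' by harmonicity does not work. Harmonicity at a vertex determines the value on one incident edge only when the values on \emph{all} the others are already known, so the propagation stalls on any cycle; since $\Gcal(\nfk)^o$ has genus $g(\Gcal(\nfk))=\dim_\CC\Hcal_0(\nfk)>0$ for $\deg\nfk\geq 3$, vanishing on all the cuspidal rays does not force vanishing on the core --- every nonzero cuspidal harmonic cochain is a counterexample to your propagation step.

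What your outline is missing is the Atkin--Lehner twist, which is the whole point of the theorem. Without it, the edges $\gamma e_0$ that determine $f$ (Lemma~\ref{lem: UD-Lem}) sit at depth up to roughly $2\deg\nfk$ in the horoball at infinity (Lemma~\ref{lem: Key-Lem}\eqref{lem: Key-Lem1}), which only yields the coarse bound of Remark~\ref{rem: otherboundsforH}\eqref{rem: otherboundsforHcoarse}. The paper instead applies the involution $W_\nfk$: by Lemma~\ref{lem: Key-Lem.2} each class $[\gamma e_0]$ is represented by $w_\nfk\left(\begin{smallmatrix}\pi_\infty^{\deg\nfk+2\delta_\nfk(c,d)+\epsilon}&u\\0&1\end{smallmatrix}\right)(\cdots)$, so $(f|W_\nfk)(\gamma e_0)$ is a value of $f$ itself at depth $\deg\nfk+2\delta_\nfk(c,d)+\epsilon$, and the pigeonhole argument of Corollary~\ref{cor: SBound.2} bounds $\delta_\nfk(c,d)$ by $\tau(\nfk)$ using the count of prime factors of $\nfk$. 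This is where $\tau(\nfk)$ enters --- not as a ``width'' or ``attachment depth'' of the rays --- and it replaces both your cusp-by-cusp bookkeeping and your inward propagation: one concludes $f|W_\nfk\equiv 0$ directly from Lemma~\ref{lem: UD-Lem}, hence $f\equiv 0$. As written, your proposal would need to be rebuilt around this twist to reach the stated bound.
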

One may observe that $\tau(\nfk) \leq \deg \nfk -2$ when $\deg \nfk \geq 2$. If $t(\nfk)$ denotes the number of prime factors of $\nfk$, we also have the following special values:
$$
\tau(\nfk) = 
\begin{cases}
0 & \text{ if $0 \leq t(\nfk) \leq q$,} \\
1 & \text{ if $q<t(\nfk)\leq 2q$.}
\end{cases}
$$

\begin{rem}
It seems difficult to tell the precise value of $\tau(\nfk)$ when $t(\nfk) > 2q$.
However, from the numerical data in \eqref{V2} and \eqref{Value of t(m,n) q=3} which were computed using SageMath, we predict that 
\begin{equation}\label{eq: predicttau}
\tau(\nfk)\ ?\!\!\leq \left\lfloor \frac{t(\nfk)-1}{q}\right\rfloor.
\end{equation}
Moreover, when $q$ and $\deg \nfk$ are small, data \eqref{compare bounds} shows that our Sturm-type bound is actually sharp in certain cases.
\end{rem}

\subsubsection{Cuspidal harmonic cochains} 

Let $\Hcal_0(\nfk)$ denote the subspace of cuspidal $\Gamma_0(\nfk)$-invariant $\CC$-valued harmonic cochains, which consists of elements of $\Hcal(\nfk)$ which are finitely supported modulo $\nfk$. The next bound is given in terms of the arithmetic quantity $\ell(\nfk)$ introduced in \text{\rm Definition~\ref{defn: l(n)}}.

\begin{thm}\label{thm: 0.4}
Given $\nfk \in A_+$, let $f \in \Hcal_0(\nfk)$. Then $f$ is identically zero if $c_\mfk(f) = 0$ for all $\mfk \in A_+$ with
\begin{eqnarray}
\deg \mfk &\leq& \deg \nfk - 2 \nonumber \\
&& +
\begin{cases} 0 & \text{ if $\nfk$ is a prime power,}\\
0 & \text{ if $\nfk$ is square-free and $f$ is \lq\lq new\rq\rq,}\\
0 & \text{ if $\nfk = \pfk^2\qfk$ for primes $\pfk,\qfk \in A_+$ with $\deg \qfk =1$ and $f$ is \lq\lq new\rq\rq,} \\
\ell(\nfk) & \text{ otherwise.}
\end{cases} \nonumber 
\end{eqnarray}
\end{thm}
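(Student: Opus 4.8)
The plan is to run, in the cuspidal setting, a sharper version of the fundamental-domain argument behind Theorem~\ref{thm: 0.3}, taking full advantage of the extra vanishing that cuspidality provides. By the decomposition of Theorem~\ref{thm: Quo-G-dec}, the quotient graph $\Gamma_0(\nfk)\backslash\sT$ is a finite \emph{core} together with finitely many half-lines indexed by the cusps $\Gamma_0(\nfk)\backslash\PP^1(K)$. Since $f\in\Hcal_0(\nfk)$ has finite support modulo $\Gamma_0(\nfk)$, it vanishes on every half-line beyond a uniformly bounded depth; this is the feature absent from the general case, and morally it is what replaces the $-1$ by $-2$ and $2\tau(\nfk)$ by $\ell(\nfk)$ when passing from Theorem~\ref{thm: 0.3} to the present bound.

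First I would set up the explicit dictionary, at the cusp $\infty$, between the Fourier coefficients and the edge-values of $f$: the value of $f$ on the edge at depth $k$ along the ray over $\infty$ is a finite linear combination of the $c_\mfk(f)$ with $\deg\mfk$ bounded in terms of $k$ and $\deg\nfk$, the constant term dropping out because $f$ is cuspidal. Inverting this, the hypothesis that $c_\mfk(f)=0$ for $\deg\mfk \le \deg\nfk - 2 + \ell(\nfk)$ forces $f$ to vanish on the ray over $\infty$ up to the matching depth; combined with the cuspidal vanishing far out, $f$ then vanishes identically on that ray and, in particular, at the vertex where it enters the core.

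The heart of the argument is the propagation of this vanishing across the core by harmonicity. At a vertex the harmonicity relation links the $q+1$ incident edge-values, so $f$ can be solved for on one edge once it is known on the remaining ones; beginning simultaneously from the ray over $\infty$ and from the (cuspidally vanishing) far ends of the other rays, I would order the edges of the core so that this local solving sweeps through the whole graph. The combinatorial quantity $\ell(\nfk)$ of Definition~\ref{defn: l(n)} is exactly the measure of how far into the core one must reach before the known boundary vanishing suffices to close up the propagation, which is why it enters the bound. I expect this to be the genuine obstacle: harmonicity is purely local, so forcing the edge-by-edge solving to terminate requires a global ordering of the core tailored to the depths and positions of all the cusps, and controlling that ordering is what both necessitates and pins down $\ell(\nfk)$.

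It remains to remove the correction term in the three listed cases. For $\nfk$ a prime power the core is simple enough that $\ell(\nfk)=0$ directly. For square-free $\nfk$, and for $\nfk=\pfk^2\qfk$ with $\deg\qfk=1$, I would bring in the newform hypothesis, using Atkin--Lehner type symmetries to identify the behaviour of $f$ near the remaining cusps with its behaviour over $\infty$; then the Fourier data at $\infty$ already controls every ray, the propagation closes up immediately, and no extra depth beyond $\deg\nfk-2$ is needed.
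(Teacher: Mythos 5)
Your proposal has the right ingredients in outline (fundamental domain, cuspidality, harmonicity, Atkin--Lehner for newforms), but the central mechanism is not the one that makes the bound $\deg\nfk-2+\ell(\nfk)$ work, and the step you lean on instead would fail. The paper's proof of the generic case (Proposition~\ref{prop: Sbound.1}) does \emph{not} propagate vanishing edge-by-edge through the core: it twists the whole problem by the Atkin--Lehner involution $W_\nfk$. The point of Lemma~\ref{lem: Key-Lem}\eqref{lem: Key-Lem2} and Lemma~\ref{lem: Key-Lem.2} is that \emph{every} determining edge $[\gamma e_0]$ (with $\gamma=\left(\begin{smallmatrix}a&b\\c&d\end{smallmatrix}\right)$) can be represented, modulo $\Gamma_0(\nfk)$, in the form $w_\nfk\left(\begin{smallmatrix}\pi_\infty^{r}&u\\0&1\end{smallmatrix}\right)(\cdots)$ with $r\le\deg\nfk+2\delta_\nfk(c,d)+\epsilon_\nfk(c,d)\le\deg\nfk+\ell(\nfk)$; so the Fourier expansion of $f=(f|W_\nfk)|W_\nfk$ up to degree $\deg\nfk-2+\ell(\nfk)$ kills $f|W_\nfk$ on all of them at once. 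Thus $\ell(\nfk)$ is not ``how far into the core the propagation must reach'': by Remark~\ref{rem: min-dis} it measures how far the vertices $[\gamma v_0]$ sit from $[w_\nfk v_0]$, i.e.\ the reach of the Fourier expansion at the cusp $[1:0]$, and it is computed from the purely arithmetic quantity $\delta_\nfk(c,d)=\min\{\max(\deg x,\deg y):\gcd(cx+dy,\nfk)=1\}$. Your alternative --- read off the coefficients at $\infty$ and then ``order the edges of the core so that local solving sweeps through the whole graph'' --- cannot work in general: harmonicity at a vertex determines one incident edge only when all the others are already known, and the obstruction to completing such a sweep is exactly the genus of the subgraph on which $f$ is not yet known (compare the paper's computation of $b^{\mathrm{true}}(\nfk)$ via $g(\Gcal(\nfk)^o_{(\ell)})$). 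Since $\dim_\CC\Hcal_0(\nfk)=g(\Gcal(\nfk))>0$ in the cases of interest, there is no ordering that closes up; without the $W_\nfk$ twist the Fourier data at $\infty$ only yields the coarse bound $2\deg\nfk-4$ of Proposition~\ref{prop: SBound-0}.

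Two further concrete problems. First, your claim that for a prime power ``the core is simple enough that $\ell(\nfk)=0$ directly'' is false: for $\nfk=\pfk^r$ one has $\delta_\nfk(c,d)=0$ for every cusp but $\epsilon_\nfk(c,d)=1$ whenever $\pfk\mid c$ (e.g.\ at $[0:1]$), so $\ell(\pfk^r)=1$. Removing that last $+1$ requires the separate argument of Section~\ref{subsubsec: prime-power}: when $\pfk\mid c$ one writes $f(\gamma e_0)=-\sum_{\varepsilon\in\FF_q}f(\gamma_{0,\varepsilon}\gamma(\cdots))$ by harmonicity at a single vertex, and each summand is a value of $f|W_\nfk$ at depth exactly $\deg\nfk$. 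Second, in the square-free and $\pfk^2\qfk$ new cases your appeal to ``Atkin--Lehner type symmetries'' is pointing in the right direction, but the actual content is the identity $f|W_{\nfk_0}=(-1)^{t(\nfk_0)}f|T_{\nfk_0}$ for newforms together with $c_\mfk(f|T_{\nfk_0})=c_{\nfk_0\mfk}(f)$ (Lemma~\ref{lem: 5.5}), which converts the Fourier expansions at all the other cusps into coefficients $c_{\nfk_0\mfk}(f)$ of degree at most $\deg\nfk-2$; as stated, your sketch does not supply this computation, and it still rests on the propagation picture above.
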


\begin{rem}\label{rem: smalldeg}
When $\deg \nfk <3$, it is known that $\Hcal_0(\nfk) =\{ 0\}$ by the genus formula for $\Gcal(\nfk)$ (\cite[Th.~2.17]{G-N}). Thus when $\deg \nfk = 3$, every $f \in \Hcal_0(\nfk)$ is \lq\lq new\rq\rq\ and Theorem~\ref{thm: 0.4} says that $f$ is identically zero if $c_\mfk(f) = 0$ for all $\mfk \in A_+$ with $\deg \mfk \leq 1$.
\end{rem}

We point out that $\ell(\nfk)$ and $\tau(\nfk)$ are defined in very different ways. From a computational point of view, it is relatively harder to determine the value $\ell(\nfk)$ than $\tau(\nfk)$.
However, we can show that $\ell(\nfk) \leq 2 \tau(\nfk)+1$ for every $\nfk \in A_+$ (Corollary \ref{cor: SBound.2}), which indicates that 
the bound in Theorem~\ref{thm: 0.4} is better than the one in Theorem~\ref{thm: 0.3}.

\subsubsection{Hecke algebra on harmonic cochains}

Similarly to the classical case, the pairing between the Hecke algebra and the space of $\CC$-valued harmonic cochains coming from the first Fourier coefficient $c_1$ is indeed perfect, cf.\ Lemma~\ref{lem: perfect-pairing}, and the action of the Hecke algebra can be seen actually from the Fourier expansion. Consequently, the previous bounds allow an explicit control on the number of Hecke operators which generate the Hecke algebra.

\begin{cor}\label{cor: 0.5}
${}$
\begin{enumerate}
\item Let $\TT(\nfk)$ be the Hecke algebra acting on $\Hcal(\nfk)$. Then $\TT(\nfk)$ is spanned as a $\CC$-vector space by $T_\mfk$ for all $\mfk \in A_+$ with
$$
\deg \mfk \leq \deg \nfk -1 + 2 \tau(\nfk).
$$
\item Let\ $\TT_0(\nfk)$
be the Hecke algebra acting on $\Hcal_0(\nfk)$.
Then $\TT_0(\nfk)$ is spanned as a $\CC$-vector space by $T_\mfk$ for all $\mfk \in A_+$ with
$$
\deg \mfk \leq 
\deg \nfk - 2 +
\begin{cases} 0 & \text{ if $\nfk$ is a prime power,}\\
\ell(\nfk) & \text{ otherwise.}
\end{cases}
$$
\item Let $\TT_0^{\text{\rm new}}(\nfk)$ be the restriction of $\TT_0(\nfk)$ acting on the \lq\lq new\rq\rq\ subspace of $\Hcal_0(\nfk)$. 
If $\nfk$ is either square-free or $\nfk = \pfk^2 \qfk$ for primes $\pfk,\qfk \in A_+$ with $\deg \qfk =1$, then
$\TT_0^{\text{\rm new}}(\nfk)$ is spanned as a $\CC$-vector space by $T_\mfk$ for all $\mfk \in A_+$ with $\deg \mfk \leq \deg \nfk -2$.
\end{enumerate}
\end{cor}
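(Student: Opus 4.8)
The plan is to deduce all three parts from the corresponding Sturm-type vanishing results by a single duality argument, feeding Theorems~\ref{thm: 0.3} and~\ref{thm: 0.4} through the perfect pairing of Lemma~\ref{lem: perfect-pairing}. The one structural input I will use throughout is that, for every $\mfk \in A_+$ and every harmonic cochain $f$ in the relevant space,
\[
c_1(T_\mfk f) = c_\mfk(f),
\]
which is simply the $\mfk$-th Fourier coefficient of $T_\mfk f$ read off from the explicit action of the Hecke algebra on Fourier expansions recalled before the statement. Consequently the pairing $\langle T,f\rangle := c_1(Tf)$, evaluated on the generators $T_\mfk$, recovers exactly the Fourier coefficients that the Sturm bounds control, and "a bound for the number of Hecke generators'' becomes dual to "a bound for the number of coefficients determining a cochain''.

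For part (1), set $B := \deg\nfk - 1 + 2\tau(\nfk)$ and let $W \subseteq \TT(\nfk)$ be the $\CC$-subspace spanned by the operators $T_\mfk$ with $\deg\mfk \leq B$. Since $c_1$ induces a perfect pairing between $\TT(\nfk)$ and $\Hcal(\nfk)$, both spaces are finite-dimensional of equal dimension, and the orthogonal complement
\[
W^\perp = \{\, f \in \Hcal(\nfk) : c_1(Tf) = 0 \text{ for all } T \in W \,\}
\]
satisfies $\dim W + \dim W^\perp = \dim \Hcal(\nfk)$. By the displayed identity, $f \in W^\perp$ is equivalent to $c_\mfk(f) = 0$ for every $\mfk \in A_+$ with $\deg\mfk \leq B$; Theorem~\ref{thm: 0.3} then forces $f = 0$, so $W^\perp = 0$. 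Hence $\dim W = \dim \Hcal(\nfk) = \dim \TT(\nfk)$, and therefore $W = \TT(\nfk)$, which is the assertion.

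Parts (2) and (3) run along identical lines, replacing $(\TT(\nfk),\Hcal(\nfk))$ by $(\TT_0(\nfk),\Hcal_0(\nfk))$, respectively by $\TT_0^{\mathrm{new}}(\nfk)$ acting on the new subspace $\Hcal_0^{\mathrm{new}}(\nfk)$, and invoking Theorem~\ref{thm: 0.4} in place of Theorem~\ref{thm: 0.3}. For part (2) the only applicable cases of Theorem~\ref{thm: 0.4} on the full cuspidal space are the prime-power case, giving bound $\deg\nfk-2$, and the generic "otherwise'' case, giving $\deg\nfk-2+\ell(\nfk)$; the two remaining zero-cases require the newness hypothesis and so feed only into part (3). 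For part (3) one uses precisely those "new'' cases, which yield the uniform bound $\deg\nfk-2$ whenever $\nfk$ is square-free or of the form $\pfk^2\qfk$ with $\deg\qfk=1$. In each instance a nonzero element of the relevant $W^\perp$ would be a (new) cuspidal cochain with vanishing coefficients up to the stated bound, contradicting Theorem~\ref{thm: 0.4}.

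The step requiring genuine care — and the expected obstacle — is that the perfect pairing of Lemma~\ref{lem: perfect-pairing} must be available not merely on $\Hcal(\nfk)$ but also on $\Hcal_0(\nfk)$ and on $\Hcal_0^{\mathrm{new}}(\nfk)$. This rests on three points: first, Hecke-stability of these subspaces, so that $\TT_0(\nfk)$ and $\TT_0^{\mathrm{new}}(\nfk)$ are well defined as the images of the Hecke algebra in the respective endomorphism rings; second, non-degeneracy of $c_1$ on each subspace, which follows from commutativity of the Hecke algebra together with the coefficient identity, since $c_1(Tg)=0$ for all $g$ gives $c_\mfk(Tf) = c_1(T_\mfk T f) = c_1\bigl(T\,(T_\mfk f)\bigr) = 0$ for all $\mfk$, whence $Tf=0$; and third, for part (3), that the "new'' condition is preserved by every $T_\mfk$ and that $c_1$ remains non-degenerate on the new part. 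Granting these structural facts, the dimension-counting closes each case exactly as in part (1).
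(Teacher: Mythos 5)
Your proposal is correct and follows essentially the same route as the paper: the authors also deduce each part by pairing the span of the relevant $T_\mfk$ against $\Hcal(\nfk)$, $\Hcal_0(\nfk)$, or $\Hcal_0^{\text{new}}(\nfk)$ via $\langle f,T\rangle = c_1(f|T)$, noting that vanishing of the pairing against that span forces $c_\mfk(f)=0$ up to the Sturm bound and hence $f=0$, and then concluding by perfectness of the pairing (their phrasing is an embedding of the harmonic-cochain space into the dual of the span, which is your dimension count). The structural points you flag --- Hecke-stability of $\Hcal_0(\nfk)$ and $\Hcal_0^{\text{new}}(\nfk)$ and perfectness of the restricted pairings --- are exactly the ones the paper records before stating its corollaries.
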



\begin{rem}
Using Theorem~\ref{thm: Quo-G-dec}, we also get the coarse bounds $2\deg \nfk -4$ for $\Hcal_0(\nfk)$ and $\TT_0(\nfk)$, and $\max (2\deg \nfk-3,\deg \nfk -1)$ for $\Hcal(\nfk)$ and $\TT(\nfk)$ (Proposition~\ref{prop: SBound-0}, Remarks \ref{rem: otherboundsforH}\eqref{rem: otherboundsforHcoarse} and \ref{rem: coarseboundforHecke}). The bounds of Theorems \ref{thm: 0.3}, \ref{thm: 0.4} and Corollary \ref{cor: 0.5} are obtained by twisting the fundamental domain by the Atkin-Lehner involution $W_\nfk$; computational data, in particular \eqref{eq: predicttau}, suggest that they are smaller than the coarse bounds.
\end{rem}

\subsubsection{Review of previous Sturm-type bounds for harmonic cochains}

Tan and Rockmore \cite{T-R} proved a Sturm bound for certain general automorphic cusp forms on $\GL_2$ over $K$: for harmonic cochains, one can derive bounds of the form $5 \deg \nfk + 5$ for normalized Hecke eigenforms (\cite[Section 3, p.~128]{T-R}), and $\deg \nfk - 2$ under the further assumption that $\nfk$ is squarefree (\cite[Section 4, p.~131]{T-R}) (note that their level $N$ corresponds here to $\nfk \, \infty$). It can be compared with the square-free case of Theorem~\ref{thm: 0.4} where we only assumed that the harmonic cochain is \lq\lq new\rq\rq.

When $\deg \nfk=3$, it is known since Gekeler \cite[5.8 and 7.1]{Gek1} that any $f \in \Hcal_0(\nfk)$ is identically zero when $c_\mfk(f)=0$ for all $\mfk\in A_+$ with $\deg \mfk \leq 1$. The same bound can be derived for the corresponding cuspidal Hecke algebra (\cite[Theorem 1.4 (iii)]{P-W}). These results are recovered by Theorem~\ref{thm: 0.4} and Corollary~\ref{cor: 0.5} (see Remark~\ref{rem: smalldeg}).

In order to improve on existing bounds, our input is to carefully describe the quotient graph $\Gamma_0(\nfk) \backslash \sT$ and to utilize the harmonicity property. We mention that although Gekeler and Nonnengardt \cite{G-N} have worked on the structure of this graph, no Sturm bound seems to appear explicitly in their paper, although it is possible that some bound can be derived.

\subsubsection{Isogeny between elliptic curves}

 Let $E$ be an elliptic curve over $K$ with split multiplicative reduction at the place $\infty$. Denote by $\nfk \, \infty$ the conductor of $E$ with $\nfk \in A_+$. From the work of Weil, Jacquet-Langlands, Grothendieck, Deligne, Drinfeld and Zarhin, there exists a unique $\Gamma_0(\nfk)$-invariant $\CC$-valued cuspidal \lq\lq 
  new\rq\rq\ harmonic cochain $f_E$ corresponding to the $K$-isogeny class of $E$ (\cite{G-R}). Combined with Theorem~\ref{thm: 0.4} applied to $f_E$, we get the following isogeny criterion.

\begin{cor}\label{cor: 0.6}
Let $E_1$ and $E_2$ be two elliptic curves over $K$ with the same conductor $\nfk \, \infty$ and split multiplicative reduction at $\infty$. Then $E_1$ and $E_2$ are isogenous over $K$ if and only if $a_\pfk(E_1) = a_\pfk(E_2)$ for every prime $\pfk \in A_+$ with
$$
\deg \pfk \leq 
\deg \nfk -2 +
\begin{cases}
0 & \text{ if $\nfk$ is a prime power,} \\
0 & \text{ if $\nfk$ is square-free,}\\
0 & \text{ if $\nfk = \qfk_1^2\qfk_2$ for primes $\qfk_1,\qfk_2 \in A_+$ with $\deg \qfk_2 =1$,} \\
\ell(\nfk) & \text{ otherwise.}
\end{cases}
$$
Here $a_\pfk(E)$ is introduced in \eqref{eqn: ap(E)}.
\end{cor}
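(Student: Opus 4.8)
The plan is to deduce Corollary~\ref{cor: 0.6} by combining the correspondence between isogeny classes of elliptic curves and cuspidal new harmonic cochains with the Sturm-type bound of Theorem~\ref{thm: 0.4}. First I would recall that to each elliptic curve $E/K$ with split multiplicative reduction at $\infty$ and conductor $\nfk\,\infty$, the work of Weil, Jacquet--Langlands, Grothendieck, Deligne, Drinfeld and Zarhin (see \cite{G-R}) attaches a unique cuspidal new harmonic cochain $f_E \in \Hcal_0(\nfk)$ depending only on the $K$-isogeny class of $E$. The key arithmetic input is that the Fourier coefficients of $f_E$ encode the local $L$-factors of $E$: for a prime $\pfk \in A_+$, the coefficient $c_\pfk(f_E)$ is (up to the normalization fixed in \eqref{eqn: ap(E)}) equal to $a_\pfk(E)$, the trace of Frobenius at $\pfk$. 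Thus two elliptic curves $E_1,E_2$ satisfy $a_\pfk(E_1)=a_\pfk(E_2)$ for a given prime $\pfk$ precisely when the corresponding harmonic cochains $f_{E_1}$ and $f_{E_2}$ have the same $\pfk$-th Fourier coefficient.

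Next I would set $f := f_{E_1} - f_{E_2}$. Since $E_1$ and $E_2$ have the same conductor $\nfk\,\infty$, both $f_{E_1}$ and $f_{E_2}$ lie in the same space $\Hcal_0(\nfk)$, and both are new; hence $f$ is again a cuspidal new harmonic cochain in $\Hcal_0(\nfk)$. The forward implication is immediate: if $E_1$ and $E_2$ are $K$-isogenous, they determine the same isogeny class, so $f_{E_1}=f_{E_2}$ and all their Fourier coefficients agree, in particular $a_\pfk(E_1)=a_\pfk(E_2)$ for every prime $\pfk$. For the converse, suppose the coefficient equality holds for all primes $\pfk$ up to the stated degree bound. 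I would then invoke the multiplicativity of Hecke eigenvalues for normalized eigenforms: the coefficient $c_\mfk(f_E)$ at an arbitrary $\mfk \in A_+$ is determined by the values $a_\pfk(E)$ at the primes $\pfk$ dividing $\mfk$ together with the Euler-product recursions. This is where I must be slightly careful, since $f = f_{E_1} - f_{E_2}$ is a difference of eigenforms and need not itself be an eigenform.

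To handle this, rather than comparing $f$ coefficient-by-coefficient at all $\mfk$, I would argue directly at the level of eigenforms. Since $f_{E_1}$ and $f_{E_2}$ are both normalized new eigenforms with $c_1 = 1$, matching their Hecke eigenvalues $a_\pfk$ at every prime $\pfk$ up to the Sturm bound forces, via the primes-generate-the-Hecke-algebra statement of Corollary~\ref{cor: 0.5}(3), the two systems of Hecke eigenvalues to coincide on all of $\TT_0^{\mathrm{new}}(\nfk)$; by the perfect pairing of Lemma~\ref{lem: perfect-pairing} this makes $f_{E_1}=f_{E_2}$ as harmonic cochains. Equivalently, applying the new square-free/$\pfk^2\qfk$ cases of Theorem~\ref{thm: 0.4} to the new cochain $f=f_{E_1}-f_{E_2}$: the multiplicativity of eigenvalues shows $c_\mfk(f)$ vanishes for all $\mfk$ whose prime factors are among those of bounded degree, covering exactly the range required by Theorem~\ref{thm: 0.4}, whence $f\equiv 0$. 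Finally, $f_{E_1}=f_{E_2}$ means $E_1$ and $E_2$ lie in the same isogeny class, i.e.\ are $K$-isogenous. The main obstacle is the bookkeeping in the converse: translating the vanishing hypothesis, stated only at \emph{primes}, into the vanishing of $c_\mfk(f)$ at composite $\mfk$ needed to feed Theorem~\ref{thm: 0.4}, which requires the eigenform multiplicativity relations and the matching of the degree bounds across the four cases on $\nfk$.
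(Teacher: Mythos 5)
Your proposal is correct and follows essentially the same route as the paper: reduce to the converse, use that $f_{E_1},f_{E_2}\in\Hcal_0^{\text{new}}(\nfk)$ are normalized Hecke eigenforms so that equality of the $a_\pfk$ at primes up to the bound propagates via the multiplicativity/recursion relations to $c_\mfk(f_{E_1})=c_\mfk(f_{E_2})$ for all $\mfk$ up to the bound, and then apply the case-by-case Sturm bounds (equivalently Theorem~\ref{thm: 0.4}) to $f_{E_1}-f_{E_2}$ to conclude $f_{E_1}=f_{E_2}$. The bookkeeping point you flag is exactly the one the paper handles implicitly through \eqref{eqn: Modularity}, and your treatment of it is sound.
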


\subsection{Equal characteristic setting}

Let $\CC_\infty$ be the completion of a chosen algebraic closure of $K_\infty$. 
Set $\Omega:= \CC_\infty - K_\infty$, the Drinfeld half plane. Let $k,m$ be non-negative integers with $0\leq m\leq q-2$. Given $\nfk \in A_+$, recall that a Drinfeld modular form $f$ of weight $k$ and type $m$ for the congruence subgroup $\Gamma_0(\nfk)$ admits a so-called \emph{$t$-expansion}:
$$f = \sum_{j=0}^\infty b_j(f)t^{m+(q-1)j},$$
where $t:\Omega\rightarrow \CC_\infty$ is a chosen uniformizer at the cusp infinity. We obtain a Sturm-type bound for Drinfeld modular forms which generalizes Gekeler \cite[Corollary~5.17]{Gek5} in the case $\nfk=1$.

\begin{thm}\label{thm: 0.2}
Given $\nfk \in A_+$, let $f$ be an $\ell$-cuspidal Drinfeld modular form of weight $k$ and type $m$ for $\Gamma_0(\nfk)$ as defined in Section~\ref{sec: SBoundD}. Then $f$ is identically zero if
$$b_j(f) = 0 \quad \text{ for all }\ 0\leq j \leq [\GL_2(A):\Gamma_0(\nfk)] \cdot \left(\frac{k}{q^2-1}-\frac{\ell}{(q-1)q^{\deg \nfk}}\right)+ \frac{\ell - m q^{\deg \nfk}}{(q-1)q^{\deg \nfk}}.$$
\end{thm}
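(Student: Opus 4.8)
The plan is to prove this via the function field analogue of the valence formula, mirroring Gekeler's approach to the case $\nfk=1$ but now carrying it out on the quotient $\Gamma_0(\nfk)\backslash\Omega$. Recall that the classical Sturm bound for weight-$k$ forms comes from equating a count of zeros against the analytic degree $k/12$ of the line bundle; here the analogue replaces $1/12$ by $1/(q^2-1)$, which is the order of vanishing of the discriminant-type form governing the weight. First I would set up the relevant modular curve: compactify $\Gamma_0(\nfk)\backslash\Omega$ to a projective curve $\overline{M}_\nfk$ over $\CC_\infty$, and interpret a Drinfeld modular form $f$ of weight $k$ and type $m$ as a section of a suitable line bundle $\Lcal_k$ on $\overline{M}_\nfk$. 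The $\ell$-cuspidal condition, as defined in the referenced section, should translate into prescribed vanishing of order at least $\ell$ (suitably normalized) at the cusps.

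The key computation is a \emph{degree count}. I would write the rigid-analytic valence formula
\begin{equation*}
\sum_{P\in\Gamma_0(\nfk)\backslash\Omega} \frac{\ord_P(f)}{|\Stab(P)|} \;+\; \sum_{\text{cusps }c} \ord_c(f) \;=\; \deg \Lcal_k \;=\; \frac{k}{q^2-1}\,[\GL_2(A):\Gamma_0(\nfk)],
\end{equation*}
where the normalization of $\deg\Lcal_k$ reflects that the weight-$k$ bundle has total degree proportional to the index $[\GL_2(A):\Gamma_0(\nfk)]$ divided by $q^2-1$. The $t$-expansion coefficients $b_j(f)$ encode the vanishing order at the cusp infinity: if $b_j(f)=0$ for all $j\le J$, then $\ord_\infty(f)$ exceeds a quantity linear in $J$, with the precise relationship dictated by the uniformizer $t$ and the width of the cusp (which carries the $q^{\deg\nfk}$ and $(q-1)$ factors appearing in the stated bound). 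The strategy is then the standard contradiction: if $f\ne 0$, every term on the left is nonnegative, the cuspidal and interior vanishing contributions are bounded below, yet forcing $b_j(f)=0$ up to the stated $J$ would push the infinity-cusp contribution alone above the total degree $\deg\Lcal_k$, which is absurd.

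The bulk of the work is bookkeeping on the \textbf{cusps and elliptic points}. I would need the precise list of cusps of $\Gamma_0(\nfk)$, their widths, and how the $\ell$-cuspidal hypothesis distributes the mandatory $\ell$-th order vanishing among them; the term $-\ell/((q-1)q^{\deg\nfk})$ and the additive correction $(\ell-mq^{\deg\nfk})/((q-1)q^{\deg\nfk})$ must emerge from computing $\ord_\infty(f)$ in terms of $j$ relative to these widths and the type $m$ (the shift by $m$ in the exponent $m+(q-1)j$ of the $t$-expansion produces the $-mq^{\deg\nfk}$). Since $\CC_\infty$ has positive characteristic, I would use the \emph{rigid-analytic} product/valence formula for $\Omega$ rather than a complex-analytic contour argument; Gekeler's machinery (building on the theory of the para-abelian/Tate-uniformized structure) supplies this, and the group-theoretic index and cusp data for $\Gamma_0(\nfk)$ should be available from the structure of $\Gamma_0(\nfk)\backslash\sT$ used earlier in the paper.

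\textbf{The hard part} will be making the cusp contribution precise enough to yield the exact fractional coefficients in the bound, and confirming that the $\ell$-cuspidal condition interacts correctly with the type $m$ so that the full additive term $(\ell-mq^{\deg\nfk})/((q-1)q^{\deg\nfk})$ appears rather than a cruder estimate. A subtlety is that in characteristic $p$ the valence formula may include contributions from supersingular or elliptic points whose stabilizers have order divisible by $p$, so I would need to verify that these either contribute nonnegatively or are absorbed into the index computation without spoiling the inequality; this is exactly where Gekeler's $\nfk=1$ formula must be generalized with care to the congruence subgroup setting.
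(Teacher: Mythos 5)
Your plan hinges on a rigid-analytic valence formula for $\Gamma_0(\nfk)\backslash\Omega$ with the precise degree $\frac{k}{q^2-1}[\GL_2(A):\Gamma_0(\nfk)]$ and an exact accounting of cusp widths and elliptic stabilizers. That formula is the entire content of the theorem, and you neither prove it nor cite a source for it: the only valence formula available in the literature (and the only one the paper invokes, namely Gekeler's \cite[(5.14)]{Gek5}) is for the full group $\GL_2(A)$. Everything you flag as ``the bulk of the work'' and ``the hard part'' --- the cusp bookkeeping that is supposed to produce the coefficients $-\ell/((q-1)q^{\deg\nfk})$ and $(\ell-mq^{\deg\nfk})/((q-1)q^{\deg\nfk})$, and the interaction of the type $m$ with the $t$-expansion exponent $m+(q-1)j$ --- is exactly the part that must be carried out for the statement to follow, and it is deferred rather than done. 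As written, the argument is a programme, not a proof. (A smaller point: your worry about elliptic points with stabilizers of order divisible by $p$ is misplaced for interior points of $\Omega$, whose stabilizers in $\GL_2(A)$ embed in $\FF_{q^2}^\times$ and so have order prime to $p$; the genuinely delicate stabilizers in characteristic $p$ sit at the cusps, via the groups $\Gamma_\infty^{(r)}$ of Theorem~\ref{thm: Weil-dec}, which is precisely where your unproved bookkeeping lives.)

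For comparison, the paper sidesteps the need for a level-$\nfk$ valence formula entirely. It uses the classical norm trick: form $\tilde f=\prod_{i=1}^{\kappa}f|_{k,m}[\gamma_i]$ over coset representatives of $\Gamma_0(\nfk)$ in $\Gamma$, observe that $\tilde f$ is a Drinfeld modular form of weight $\kappa k$ for the full group, use $\ell$-cuspidality to bound from below the $t_\nfk$-order of each factor $f|_{k,m}[\gamma_i]$ (this is where the widths $\nfk_{\gamma_i}$ and the factor $|\nfk|_\infty=q^{\deg\nfk}$ enter, through $t(az)=\sum_i c_i\,t^{|a|_\infty+i}(z)$), deduce the vanishing of $b_j(\tilde f)$ up to the level-one Sturm bound, and then apply Gekeler's valence formula only at level one. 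Since the ring of rigid analytic functions on $\Omega$ is a domain, $\tilde f=0$ forces $f=0$. If you want to salvage your approach you would first have to establish the valence formula for $\Gamma_0(\nfk)$ with all cusp contributions made explicit; the product argument is the standard way to avoid that investment.
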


This bound is essentially similar to the classical Sturm bound  (Theorem~\ref{thm: classical}) and is proved likewise.
However, given $\nfk \in A_+$, $k, \ell \in \ZZ_{\geq 0}$, and an integer $m$ with $0\leq m \leq q-2$, the pairing between the space of $\ell$-cuspidal Drinfeld modular forms of weight $k$ and type $m$ for $\Gamma_0(\nfk)$ and the associated Hecke algebra, given by the first coefficient $b_1$, is not expected to be perfect (cf. Section~\ref{sec: SBoundD2}). Besides it is not obvious to how read off the action of the Hecke algebra on Drinfeld modular forms via their $t$-expansions.
Thus, differently from the cases of classical modular forms and harmonic cochains, the bound of Theorem~\ref{thm: 0.2} does not give directly a bound for generators of the Hecke algebra on Drinfeld modular forms.

\subsection{Content}

This paper is organized as follows. We set up basic notations in Section~\ref{Sec: Pre}. In Section~\ref{sec: Quo-G}, we review the structure of the quotient graph of $\sT$ by congruence subgroups $\Gamma_0(\nfk)$ and in Section~\ref{Sec-Har} the needed properties of harmonic cochains. In Section~\ref{Sec: Sbound}, we first prove Theorem~\ref{thm: 0.4} for cuspidal harmonic cochains in Section~\ref{subsec: Cusp}; Theorem~\ref{thm: 0.3} for harmonic cochains is obtained in Section~\ref{subsec: Non-cusp}.
Section~\ref{sec: App} includes applications of our Sturm-type bounds for harmonic cochains to the Hecke algebra and to isogenies between elliptic curves: Corollary~\ref{cor: 0.5} is shown in Section~\ref{subsec: Hec-Alg} and Section~\ref{subsec: new-space}, and Corollary~\ref{cor: 0.6} is derived in Section~\ref{subsec: Ell}.
Finally, we prove Theorem~\ref{thm: 0.2} for Drinfeld modular forms in Section~\ref{sec: SBoundD2}.

\section{Preliminaries}\label{Sec: Pre}

\subsection{Notations}\label{subsec: Not}

Let $\FF_q$ be a finite field with $q$ elements and $K := \FF_q(\theta)$, the rational function field with one variable $\theta$ over $\FF_q$. Let $A:= \FF_q[\theta]$ be the ring of integers of $K$ and $A_+$ be the set of monic polynomials in $A$. 
The degree valuation on $K$, i.e.\ the valuation corresponding to the infinite place $\infty$ of $K$, is defined by:
    $$\forall a,b \in A \text{ with } b \neq 0,\quad\nu_\infty(a/b) := \deg b - \deg a$$
and the corresponding absolute value is normalized to be:
$$\forall \alpha \in K,\quad \left|\alpha \right|_\infty := q^{-\nu_\infty(\alpha)}. $$
Take $\pi_\infty := \theta^{-1}$, a uniformizer at $\infty$.
Let $K_\infty :=\FF_q (\!(\pi_\infty)\!)$ be the completion of $K$ with respect to $|\cdot|_\infty$,
and set $O_\infty := \FF_q [\![\pi_\infty]\!]$, 
the ring of integers in $K_\infty$.

\subsection{Bruhat-Tits Tree}\label{subsec: Tree}

Let $\sT$ be the Bruhat-Tits tree associated to $\PGL_2(K_\infty)$. Let $V(\sT):=\GL_2(K_\infty)/K_\infty^\times \GL_2(O_\infty)$ be its set of vertices and $E(\sT):=\GL_2(K_\infty)/K_\infty^\times \Ical_\infty$ its set of oriented edges, where $\Ical_\infty$ is the Iwahori subgroup
$$\Ical_\infty:=\left\{ \begin{pmatrix} a&b \\ c & d \end{pmatrix} \in \GL_2(O_\infty)\ \Bigg| \ c  \equiv 0 \bmod \pi_\infty \right\}.$$
For an edge $e$, we denote by $o(e)$ is origin, $t(e)$ its terminus, and $\bar{e}$ the opposite edge. Given $g \in \GL_2(K_\infty)$ let $e_g$ be the coset of $g$ in $E(\sT)$, i.e. the  oriented edge corresponding to $g$ on~$\sT$. More precisely we have 
$$o(e_g) := g \cdot K_\infty^\times \GL_2(O_\infty) \ \in V(\sT)$$
and
\begin{align*}
t(e_g) := g\begin{pmatrix}0 & 1 \\ \pi_\infty & 0 \end{pmatrix} \cdot K_\infty^\times \GL_2(O_\infty)  = g \begin{pmatrix}\theta & 0 \\ 0 & 1 \end{pmatrix} \cdot K_\infty^\times \GL_2(O_\infty)\ \in V(\sT).
\end{align*}
In particular, the opposite edge $\bar{e}_g$ of $e_g$ is represented by $g \begin{pmatrix}0 & 1 \\ \pi_\infty & 0 \end{pmatrix} \in \GL_2(K_\infty)$.

\section{Congruence subgroups \texorpdfstring{$\Gamma_0(\nfk)$}{Gamma0N} and quotient graphs}\label{sec: Quo-G}

Let $\Gamma:=\GL_2(A) \subset \GL_2(K_\infty)$, which acts from the left on $\sT$.
In this section, we shall recall the needed properties of the quotient graphs associated to the congruence subgroups $\Gamma_0(\nfk)$ of $\Gamma$.
Let 
$$\Gamma_\infty:=\left\{\begin{pmatrix} a&b\\ c&d\end{pmatrix} \in \Gamma\ \bigg|\ c = 0\right\}.$$
Recall Weil's decomposition of elements in $\GL_2(K_\infty)$ as follows:

\begin{thm}\label{thm: Weil-dec}
\rm (Cf.\ \cite[3 and 4]{Weil})
\begin{enumerate}
\item Given $g \in \GL_2(K_\infty)$, there exists a unique $r \in \ZZ_{\geq 0}$ such that
$$
g = \gamma \cdot 
\begin{pmatrix}
\theta^r & 0 \\
0 & 1
\end{pmatrix}
\cdot z \cdot \kappa
$$
for some $\gamma \in \Gamma$, $z \in K_\infty^\times$, and $\kappa \in \GL_2(O_\infty)$.
\item
For each $r \in \ZZ_{\geq 0}$, 
let $v_r$ (resp.\ $e_r$) be the vertex (resp.\ oriented edge) of $\sT$ represented by
$\begin{pmatrix}
\theta^r & 0 \\
0 & 1
\end{pmatrix}$.
Then the stabilizer of $v_r$ under $\Gamma$ is
$$\Stab_\Gamma(v_r) = 
\begin{cases}
\GL_2(\FF_q) & \text{ if $r = 0$,} \\
\Gamma_\infty^{(r)} := \left\{\begin{pmatrix}a & b \\ 0 & d \end{pmatrix}\in \Gamma_\infty
\ \bigg| \deg b \leq r \right\} & \text{ if $r > 0$,}
\end{cases}
$$
and the stabilizers of $e_r$ and $\bar{e}_r$ under $\Gamma$ are
$$\forall r \geq 0, \quad\Stab_\Gamma(e_r) = \Stab_\Gamma(\bar{e}_r) = \Gamma_\infty^{(r)}.$$
\end{enumerate}
\end{thm}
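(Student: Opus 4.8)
The plan is to derive both statements from the well-known structure theory of $\GL_2(K_\infty)$ over the complete discretely valued field $K_\infty$, combined with the explicit action of $\Gamma = \GL_2(A)$ on the tree. For part (1), I would begin from the Iwasawa and Cartan decompositions: the Cartan (or elementary divisor) decomposition writes any $g \in \GL_2(K_\infty)$ as $g = \kappa_1 \cdot \mathrm{diag}(\pi_\infty^{a},\pi_\infty^{b}) \cdot \kappa_2$ with $\kappa_1,\kappa_2 \in \GL_2(O_\infty)$ and $a \leq b$, which identifies the $\GL_2(O_\infty)$-double cosets, and hence the vertices of $\sT$ up to the $\GL_2(O_\infty)$-action, with the non-negative integer $r := b - a$. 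Modulo the center $K_\infty^\times$ one may normalize so that the diagonal part becomes $\mathrm{diag}(\theta^{r},1)$ since $\theta = \pi_\infty^{-1}$. The content of part (1) is then to replace the left factor in $\GL_2(O_\infty)$ by a factor in the arithmetic group $\Gamma$; this is exactly the statement that $\Gamma$ acts transitively on the vertices lying over each $r$, i.e.\ that $\Gamma \backslash \sT$ has the half-line $(v_r)_{r \geq 0}$ as a system of representatives. I would establish this via strong approximation for $\SL_2$ (equivalently, the fact that $\GL_2(A)$ together with $\GL_2(O_\infty)$ and the center fill up $\GL_2(K_\infty)$, which is Weil's original point), and uniqueness of $r$ follows because $r$ is an $\GL_2(O_\infty)$-invariant reading off the distance in $\sT$ from the base vertex $v_0$.

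For part (2), the stabilizer computation is a direct matrix calculation once the representatives are fixed. I would compute $\Stab_\Gamma(v_r)$ by asking which $\gamma \in \GL_2(A)$ satisfy
$$
\gamma \begin{pmatrix} \theta^r & 0 \\ 0 & 1 \end{pmatrix} \in \begin{pmatrix} \theta^r & 0 \\ 0 & 1 \end{pmatrix} K_\infty^\times \GL_2(O_\infty),
$$
equivalently which $\gamma$ fix the corresponding $O_\infty$-lattice class up to scaling. Writing $\gamma = \left(\begin{smallmatrix} a & b \\ c & d \end{smallmatrix}\right)$ and conjugating by $\mathrm{diag}(\theta^r,1)$, the integrality conditions at $\infty$ force $c$ to vanish (its entry picks up a factor $\theta^{r}$ forcing a polynomial of negative degree, hence $c=0$) and constrain $\deg b \leq r$, while $a,d \in \FF_q^\times$ because they must be units in both $A$ and $O_\infty$. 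This yields $\Gamma_\infty^{(r)}$ for $r > 0$ and the full $\GL_2(\FF_q)$ for $r=0$, where the lattice is the standard one and every unimodular $\FF_q$-matrix stabilizes it. For the edges, I would note that $e_r$ and $v_r$ have the same stabilizer because the terminus $t(e_r)$ sits at $v_{r+1}$ or $v_{r-1}$ along the spine and the Iwahori condition defining $E(\sT)$ cuts down $\Stab_\Gamma(v_r) \cap \Stab_\Gamma(v_{r\pm 1})$ to precisely $\Gamma_\infty^{(r)}$; since this already equals $\Stab_\Gamma(v_r)$ in the range considered, no orientation is reversed and $\Stab_\Gamma(e_r) = \Stab_\Gamma(\bar e_r) = \Gamma_\infty^{(r)}$.

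The main obstacle is part (1), specifically the passage from $\GL_2(O_\infty)$-double cosets to $\Gamma$-orbits: this is where genuine arithmetic input (strong approximation, or equivalently Weil's adelic argument that the class number of $\GL_2$ over $K$ is one) enters, whereas everything else is formal lattice bookkeeping. In practice I would simply cite \cite{Weil} for transitivity and uniqueness of $r$, and present the stabilizer computation of part (2) in full, since that is the part actually used in the sequel for describing the fundamental domain of $\Gamma_0(\nfk) \backslash \sT$.
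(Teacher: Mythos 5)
The paper does not prove this theorem at all: it is imported verbatim from Weil (the ``Cf.~[3 and 4]'' of \cite{Weil}), so there is no in-paper argument to compare against. Your plan --- cite the arithmetic input for the decomposition in (1) and verify the stabilizers in (2) by direct matrix computation --- is the standard and reasonable route, and your computation of $\Stab_\Gamma(v_r)$ (conjugating $\gamma$ by $\mathrm{diag}(\theta^r,1)$ and reading off the integrality conditions at $\infty$) is correct. Three points need repair, though. First, your uniqueness argument for $r$ does not work as stated: the distance from the base vertex $v_0$ is a $\GL_2(O_\infty)$-invariant but not a $\Gamma$-invariant (indeed Lemma~\ref{lem: Dist} of the paper shows $\gamma v_0$ can be far from $v_0$), so it cannot separate $\Gamma$-orbits. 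Uniqueness of $r$ needs genuine input --- e.g.\ the distinct orders of the stabilizers you compute in (2), or the identification of $\Gamma\backslash\GL_2(K_\infty)/K_\infty^\times\GL_2(O_\infty)$ with rank-two bundles on $\PP^1$ up to twist together with Grothendieck's splitting theorem. Second, the parenthetical claim that ``$\GL_2(A)$ together with $\GL_2(O_\infty)$ and the center fill up $\GL_2(K_\infty)$'' is false as literally written (that would make the quotient graph a single vertex); what strong approximation and class number one actually give is the adelic description of the double coset space, after which the half-line structure still requires the classification of bundles on $\PP^1$. Third, your assertion that $e_r$ and $v_r$ have the same stabilizer fails at $r=0$: one has $\Stab_\Gamma(e_0)=\Stab_\Gamma(v_0)\cap\Stab_\Gamma(v_1)=\Gamma_\infty^{(0)}$, the upper-triangular subgroup of $\GL_2(\FF_q)$, which is strictly smaller than $\Stab_\Gamma(v_0)=\GL_2(\FF_q)$; the stated formula $\Stab_\Gamma(e_r)=\Gamma_\infty^{(r)}$ is nevertheless correct, so this is a slip in the justification rather than in the conclusion. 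Since you end by deferring to \cite{Weil} for part (1), which is exactly what the authors do, the net effect of your proposal matches the paper once these local errors are fixed.
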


Given two vertices $v,v' \in V(\sT)$, we denote by $d(v,v')$ the \emph{distance} between $v$ and $v'$, i.e.\ the number of edges lying in the unique path connecting $v$ and $v'$. 

\begin{lem}\label{lem: Dist}
Given $\gamma = \begin{pmatrix} a&b \\ c&d\end{pmatrix} \in \Gamma$, one has $d(\gamma v_0,v_0) = 2\max(\deg a,\deg b,\deg c,\deg d)$.
\end{lem}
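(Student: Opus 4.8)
The plan is to pass from vertices to lattices and then read off the distance from the elementary divisor (Cartan) decomposition. Recall that the vertices of $\sT$ are the homothety classes $[L]$ of $O_\infty$-lattices $L \subset K_\infty^2$, that $v_0 = [O_\infty^2]$, and that for $g \in \GL_2(K_\infty)$ the vertex $g\cdot v_0$ is the class $[gO_\infty^2]$, where $gO_\infty^2$ is the lattice spanned by the columns of $g$. Hence $\gamma v_0 = [\gamma O_\infty^2]$, and the task is to compute $d([\gamma O_\infty^2],[O_\infty^2])$.

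First I would apply the elementary divisor theorem over the discrete valuation ring $O_\infty$: there are $\kappa_1,\kappa_2 \in \GL_2(O_\infty)$ and integers $n_1 \leq n_2$ with
$$
\gamma = \kappa_1 D \kappa_2, \qquad D := \begin{pmatrix} \pi_\infty^{n_1} & 0 \\ 0 & \pi_\infty^{n_2}\end{pmatrix}.
$$
Since $\kappa_1$ and $\kappa_2$ fix $v_0$ and act as isometries of $\sT$, one has $d(\gamma v_0,v_0) = d(Dv_0, v_0)$, and the standard lattice description of the tree gives $d(Dv_0,v_0) = n_2 - n_1$ (the lattice $\pi_\infty^{n_1}O_\infty \oplus \pi_\infty^{n_2}O_\infty$ is homothetic to $O_\infty \oplus \pi_\infty^{n_2-n_1}O_\infty$, a sublattice of $O_\infty^2$ of index $q^{n_2-n_1}$ lying at the end of a geodesic of that length). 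It then remains only to determine $n_1$ and $n_2$ in terms of $\gamma$.

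These are pinned down by two elementary facts. Taking determinants in $\gamma = \kappa_1 D\kappa_2$ and using $\det\kappa_i \in O_\infty^\times$ gives $n_1 + n_2 = \nu_\infty(\det\gamma)$. Moreover the minimal valuation of the matrix entries is invariant under left and right multiplication by $\GL_2(O_\infty)$ — each entry of $\kappa_1 D\kappa_2$ lies in the $O_\infty$-span of the entries of $D$, and the same argument applied to $D = \kappa_1^{-1}\gamma\kappa_2^{-1}$ gives the reverse inequality — so $n_1 = \min_{i,j}\nu_\infty(\gamma_{ij})$. Specializing to $\gamma = \begin{pmatrix} a&b\\c&d\end{pmatrix} \in \GL_2(A)$, we have $\det\gamma \in A^\times = \FF_q^\times$, hence $\nu_\infty(\det\gamma)=0$ and $n_2 = -n_1$; and $\nu_\infty(x) = -\deg x$ for $x \in A$ gives $n_1 = -\max(\deg a,\deg b,\deg c,\deg d)$. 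Therefore $d(\gamma v_0,v_0) = n_2 - n_1 = 2\max(\deg a,\deg b,\deg c,\deg d)$.

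The determinant identity and the entrywise-valuation bookkeeping are routine; the one point that genuinely needs care is the distance formula $d(Dv_0,v_0) = n_2 - n_1$, which I would either cite from the standard lattice model of the Bruhat--Tits tree or, for self-containment, prove by exhibiting an explicit geodesic chain of lattices of length $n_2 - n_1$ between the two classes and verifying its minimality. As a sanity check, the case $\gamma \in \GL_2(\FF_q)$ yields $n_1 = n_2 = 0$ and distance $0$, consistent with $\Stab_\Gamma(v_0) = \GL_2(\FF_q)$ in Theorem~\ref{thm: Weil-dec}.
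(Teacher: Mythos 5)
Your proof is correct, but it takes a genuinely different route from the paper's. The paper applies the Iwasawa decomposition to replace $\gamma$ by an upper-triangular representative $\gamma\kappa$ (with a case split on $\deg c$ versus $\deg d$) and then invokes an explicit distance formula from Gekeler, $d(v,v_0)=\max(\nu_\infty(\alpha)-\nu_\infty(u),0)+|\min(\nu_\infty(\alpha),\nu_\infty(u))-\nu_\infty(\beta)|$ for $v$ represented by $\left(\begin{smallmatrix}\alpha & u\\ 0&\beta\end{smallmatrix}\right)$, finishing with a short case check. You instead use the Cartan (elementary divisor) decomposition $\gamma=\kappa_1 D\kappa_2$ and read the distance off as $n_2-n_1$, then pin down $n_1,n_2$ via the determinant and the $\GL_2(O_\infty)$-bi-invariance of the minimal entry valuation; both of those identifications are correct (and the zero-entry convention $\nu_\infty(0)=+\infty$ causes no trouble since $\gamma$ is invertible). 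Your approach is cleaner and more conceptual: it avoids the case analysis and the external reference, at the cost of needing the standard lattice-model fact $d(Dv_0,v_0)=n_2-n_1$, which you rightly flag as the one step to cite or prove. The paper's approach has the side benefit that the same Iwasawa-type computations with explicit upper-triangular representatives recur throughout (e.g.\ in Lemma~\ref{lem: Key-Lem}), so the two uses share machinery. Either way the specialization to $\Gamma=\GL_2(A)$, where $\nu_\infty(\det\gamma)=0$ forces $n_2=-n_1=\max(\deg a,\deg b,\deg c,\deg d)$, is exactly right.
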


\begin{proof}
By the Iwasawa decomposition, there exists $\kappa \in \GL_2(O_\infty)$ such that
$$
\gamma \cdot \kappa = 
\begin{cases}
\frac{1}{d}\begin{pmatrix} \det \gamma & bd \\ 0 & d^2 \end{pmatrix} & \text{ if $\deg d \geq \deg c$,} \\
\frac{1}{c}\begin{pmatrix} \det \gamma & ac \\ 0 & c^2 \end{pmatrix} &  \text{ otherwise.}
\end{cases}
$$
Note that for a vertex $v \in V(\sT)$ represented by $\begin{pmatrix} \alpha & u \\ 0&\beta\end{pmatrix} \in \GL_2(K_\infty)$, one can derive from \cite[p.\ 185]{Gek} that
$
d(v,v_0) = 
\max\big(\nu_\infty(\alpha)-\nu_\infty(u),0\big) + \big|\min\big(\nu_\infty(\alpha),\nu_\infty(u)\big)-\nu_\infty(\beta)\big|
$. The result then follows from a straightforward argument.
\end{proof}

Given $\nfk \in A_+$, put
$$
\Gamma_0(\nfk) := \left\{ \begin{pmatrix}a&b \\ c&d \end{pmatrix} \in \Gamma \ \bigg|\ c \equiv 0 \bmod \nfk \right\}.
$$
Let $\Gcal(\nfk) := \Gamma_0(\nfk) \backslash \sT$ be the quotient graph of $\sT$ by $\Gamma_0(\nfk)$. Its set of vertices is $V(\Gcal(\nfk)): = \Gamma_0(\nfk)\backslash V(\sT)$ and its set of oriented edges is $E(\Gcal(\nfk)) := \Gamma_0(\nfk)\backslash E(\sT)$. If $e$ is an edge of $\sT$, we denote by $[e]$ the corresponding edge of $\Gcal(\nfk)$.

By Theorem \ref{thm: Weil-dec},
the quotient graph $\Gcal(1)$ is a half line (cf.\ Figure \ref{figure: N=1}), 
and the vertices (resp.\ oriented edges) of $\Gcal(1)$ are represented by $v_r \in V(\sT)$ (resp.\ $e_r$ and $\bar{e}_r$ in $E(\sT)$) for $r \in \ZZ_{\geq 0}$.

\begin{figure}
\begin{tikzpicture}[->, >=stealth, semithick, node distance=1.5cm, inner sep=.5mm, vertex/.style={circle, fill=black}]

\node[vertex] (0) [label=below:${[v_0]}$]{};
  \node[vertex] (1) [right of=0, label=below:${[v_1]}$] {}; 
  \node[vertex] (2) [right of=1, label=below:${[v_2]}$] {};
  \node[vertex] (3) [right of=2, label=below:${[v_3]}$] {};
  \node[] (4) [right of=3] {};

\path[]
    (0) edge  (1) (1) edge (2) (2) edge (3) (3) edge[dashed] (4);   
\end{tikzpicture}
\caption{Graph of $\Gamma \backslash \sT$}\label{figure: N=1}
\end{figure}

For general $\nfk$, by Theorem \ref{thm: Weil-dec} we know that the vertices and the oriented edges of $\Gcal(\nfk)$ can be respectively represented in $\sT$ by elements in
\begin{equation}\label{eq: Rep}
\{\gamma \, v_r\ |\ r \in \ZZ_{\geq 0} \text{ and } \gamma \in \Gamma_0(\nfk) \backslash \Gamma \}  \quad \text{and} \quad
\{\gamma \, e_r,\ \gamma \, \bar{e}_r\ |\ r \in \ZZ_{\geq 0} \text{ and } \gamma \in \Gamma_0(\nfk) \backslash \Gamma \}.
\end{equation}
Moreover, $\gamma \, v_r$ and $\gamma'\, v_{r'}$ (resp.\ $\gamma e_r$ and $\gamma' e_{r'}$) represent the same vertex (resp.\ edge) in $\Gcal(\nfk)$ if and only if $r = r'$ and 
\begin{eqnarray}\label{eq: stab}
&& \gamma'\Stab_\Gamma(v_r)\gamma^{-1} \cap \Gamma_0(\nfk) \neq \emptyset
\quad (\text{resp.\ }
\gamma'\Stab_\Gamma(e_r)\gamma^{-1} \cap \Gamma_0(\nfk) \neq \emptyset). 
\end{eqnarray}

\begin{rem}\label{rem: Quo-G}
${}$
\begin{enumerate}
\item
For $\gamma,\gamma' \in \Gamma$ and distinct $r,r' \in \ZZ_{\geq 0}$, the edge $[\gamma e_r]$ is always different from $[\gamma'  e_{r'}]$ and $[\gamma' \bar{e}_{r'}]$ in $E(\Gcal(\nfk))$. 
\item If $[\gamma \,e_r] = [\gamma'\, e_r]$ in $E(\Gcal(\nfk))$ for some $r \geq 1$ and $\gamma,\gamma'\in\Gamma$, 
then $[\gamma \,e_{r+n}] =[ \gamma' \,e_{r+n}]$ for all $n \in \ZZ_{\geq 0}$.
\end{enumerate}
\end{rem}

For $\gamma = \begin{pmatrix}a&b\\ c&d\end{pmatrix} \in \Gamma$, we let
$$\nfk_\gamma := \frac{\nfk}{\gcd(c^2,\nfk)},$$
and call it the \emph{width} of $\gamma$.

Let $\PP^1(A/\nfk)$ be the projective line over the ring $A/\nfk$ consisting of elements denoted by $(c:d) \bmod \nfk$. The group $\Gamma$ acts from the right on $\PP^{1}(A/\nfk)$. We call $\Ccal(\nfk):= \PP^1(A/\nfk)/\Gamma_\infty$ the set of \emph{cusps of $\Gcal(\nfk)$}.
The $\Gamma_\infty$-orbit represented by $(c:d) \bmod \nfk$ is denoted by $[c:d]$.
The quotient graph $\Gcal(\nfk)$ can be decomposed as follows:

\begin{thm}\label{thm: Quo-G-dec}
Given $\nfk \in A_+$, the quotient graph $\Gcal(\nfk)$ is the union of a finite graph $\Gcal(\nfk)^o$ and a set of ends $E_s$ indexed by the cusps $s \in \Ccal(\nfk)$.
Here: 
\begin{itemize}
\item The set of vertices of the finite subgraph $\Gcal(\nfk)^o$ is the image of 
$$
\{\gamma \, v_r\ |\ 0 \leq r \leq \deg \nfk_{\gamma} - 1 \text{ and } \gamma \in \Gamma_0(\nfk) \backslash \Gamma \} \subset V(\sT),
$$
and the set of edges of $\Gcal(\nfk)^o$ is the image of 
$$
\{\gamma \, e_r, \ \gamma \, \bar{e}_r \ |\ 0 \leq r \leq \deg \nfk_{\gamma} - 2 \text{ and } \gamma \in \Gamma_0(\nfk) \backslash \Gamma \} \subset E(\sT).
$$
\item For each $s = [c:d] \in \Ccal(\nfk)$, we may assume that $\gcd(c,d,\nfk)=1$ and choose $a, b \in A$ so that $ad - bc = 1$; let 
$\gamma_s := \begin{pmatrix} a&b \\ c&d \end{pmatrix} \in \Gamma$ and $\ell_s := \max(0,\deg \nfk_{\gamma_s}-1)$.
The vertices (resp.\ oriented edges) of the end $E_s$ are represented by
$$
\{\gamma_s \, v_r\ |\ r \geq \ell_s\}
\quad (\text{resp.\ } \{\gamma_s \, e_r, \gamma_s \, \bar{e}_r \ |\ r \geq \ell_s\}).
$$
\end{itemize}
\end{thm}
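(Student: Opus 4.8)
The plan is to combine Weil's decomposition (Theorem~\ref{thm: Weil-dec}) with the explicit description of representatives in \eqref{eq: Rep} and the identification criterion \eqref{eq: stab}, then separate the representatives according to the width $\nfk_\gamma$. By Theorem~\ref{thm: Weil-dec}, every vertex of $\sT$ is $\Gamma$-equivalent to exactly one $v_r$, so every vertex of $\Gcal(\nfk)$ has a representative $\gamma\,v_r$ with $\gamma \in \Gamma_0(\nfk)\backslash\Gamma$ and $r \ge 0$; the same holds for edges via $\gamma\,e_r,\gamma\,\bar e_r$. The key computation is to understand, for a fixed coset $\gamma\Gamma_\infty$ (equivalently a fixed cusp $s = [c:d]$), how the vertices $\gamma\,v_r$ stabilize as $r$ grows. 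Intuitively, for small $r$ the stabilizer $\Stab_\Gamma(v_r) = \Gamma_\infty^{(r)}$ is small enough that the identifications in \eqref{eq: stab} glue only finitely much, producing the finite \emph{core} $\Gcal(\nfk)^o$; once $r$ is large the end $E_s$ becomes a half-line isomorphic to $\Gcal(1)$, because Remark~\ref{rem: Quo-G}(2) shows edge-identifications propagate upward.

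First I would attach to each vertex/edge representative its cusp, i.e.\ the class $[c:d] \in \Ccal(\nfk)$ determined by the bottom row of $\gamma$, and show this class depends only on the corresponding vertex of $\Gcal(\nfk)$ for $r$ large. The main technical step is to compute, for a fixed $\gamma_s$, the threshold in $r$ beyond which the criterion \eqref{eq: stab} forces the edge $[\gamma_s\,e_r]$ to lie on a genuine ray with trivial further identifications. Here one uses that $\Stab_\Gamma(e_r) = \Gamma_\infty^{(r)}$ consists of upper-triangular matrices with $\deg b \le r$, and one conjugates by $\gamma_s$ to test membership in $\Gamma_0(\nfk)$: the condition that the lower-left entry of $\gamma_s\begin{pmatrix}a&b\\0&d\end{pmatrix}\gamma_s^{-1}$ be divisible by $\nfk$ is exactly a congruence controlled by $c^2$, hence by $\gcd(c^2,\nfk)$, which is precisely why the width $\nfk_\gamma = \nfk/\gcd(c^2,\nfk)$ enters. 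One checks that the relevant self-identifications of $[\gamma_s\,v_r]$ cease once $r \ge \deg\nfk_{\gamma_s}$, giving the cutoff $\ell_s = \max(0,\deg\nfk_{\gamma_s}-1)$; the edge bound is shifted by one relative to the vertex bound because an edge $e_r$ involves the pair $(v_r,v_{r+1})$, accounting for the $\deg\nfk_\gamma-1$ versus $\deg\nfk_\gamma-2$ discrepancy in the statement.

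I would then verify the two assertions of the decomposition. For the core, I must show that the displayed finite set of $\gamma\,v_r$ and $\gamma\,e_r$ with $r$ below the width threshold surjects onto $V(\Gcal(\nfk)^o)$ and $E(\Gcal(\nfk)^o)$, and that it is genuinely finite: finiteness follows since $\Gamma_0(\nfk)\backslash\Gamma$ is finite (indexed by $\PP^1(A/\nfk)$) and $r$ is bounded by $\deg\nfk_\gamma - 1 \le \deg\nfk - 1$. For the ends, I would fix a cusp $s$, choose $\gamma_s$ as in the statement (using $\gcd(c,d,\nfk)=1$ to solve $ad-bc=1$), and show that $\{\gamma_s\,v_r : r \ge \ell_s\}$ maps injectively to a ray in $\Gcal(\nfk)$ by checking \eqref{eq: stab} fails for $r \ne r'$ (Remark~\ref{rem: Quo-G}(1)) and for distinct cosets at the same large $r$; Remark~\ref{rem: Quo-G}(2) then guarantees the ray structure is preserved as $r$ increases. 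Finally I would confirm that the core and the ends cover $\Gcal(\nfk)$ without overlap beyond the shared boundary vertex at $r = \ell_s$.

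The main obstacle I anticipate is the bookkeeping in the stabilizer intersection \eqref{eq: stab}: one must translate the abstract nonemptiness condition $\gamma'\Stab_\Gamma(e_r)\gamma^{-1}\cap\Gamma_0(\nfk)\neq\emptyset$ into an explicit divisibility statement, carefully tracking how the degree bound $\deg b \le r$ from $\Gamma_\infty^{(r)}$ interacts with reduction modulo $\nfk$, and verifying that the transition from identification to non-identification happens sharply at the width-determined threshold. Handling the special case $r=0$, where $\Stab_\Gamma(v_0) = \GL_2(\FF_q)$ is larger than $\Gamma_\infty^{(0)}$, and reconciling it with the uniform formulas will also require care; I expect the genuine content of the theorem to reside entirely in pinning down this threshold, with the remaining surjectivity and covering claims being formal consequences of Weil's decomposition.
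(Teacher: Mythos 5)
Your proposal is correct and follows essentially the same route as the paper's proof: both rest on Weil's decomposition, the representatives in \eqref{eq: Rep}, the identification criterion \eqref{eq: stab} together with Remark~\ref{rem: Quo-G}, and the conjugation computation showing that the lower-left entry of $\gamma\beta\gamma^{-1}$ for $\beta \in \Gamma_\infty^{(r)}$ is governed by $c^2$ modulo $\nfk$, which is exactly how the width $\nfk_\gamma = \nfk/\gcd(c^2,\nfk)$ and the threshold $\ell_s$ enter. The paper phrases the conclusion as an equivalence between gluing of the edges $\gamma e_r$, $\gamma' e_{r'}$ at the threshold level and equality of cusps $[c:d]=[c':d']$ in $\PP^1(A/\nfk)/\Gamma_\infty$, but this is the same content as your cusp-labelling plus ray-injectivity argument.
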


\begin{proof}
See \cite[Section~1.8]{G-N} and \cite[Section~3.1]{T-R}, except for the input of the width $\nfk_\gamma$ for $\gamma \in \Gamma$. We recall the argument here for the sake of completeness.
We first identify $\Gamma_0(\nfk)\backslash \Gamma$ with $\PP^1(A/\nfk)$ by sending $\gamma = \begin{pmatrix}a&b \\ c&d\end{pmatrix}$ to $(c:d) \bmod \nfk$.
In particular, one may take the representatives $\gamma = \begin{pmatrix} a&b \\ c&d \end{pmatrix}$ for the right cosets of $\Gamma_0(\nfk)$ in $\Gamma$ 
satisfying $c \mid \nfk$ and $\deg d \leq \deg \nfk -1$.
For two cosets of $\Gamma_0(\nfk)$ represented by $\gamma = \begin{pmatrix}a&b \\ c&d\end{pmatrix}$ and $\gamma' = \begin{pmatrix}a'&b' \\ c'&d'\end{pmatrix}$ respectively, with $c,c' \mid \nfk$ and $\deg d, \deg d' \leq \deg \nfk -1$,
let $r = \max(0,\deg \nfk_{\gamma} -1)$ and $r' = \max(0,\deg \nfk_{\gamma'}-1)$.
Using \eqref{eq: stab}, Theorem~\ref{thm: Weil-dec} and Remark~\ref{rem: Quo-G}, it can be checked that the edges $\gamma \, e_r$ and $\gamma' \, e_{r'}$ of $\sT$ represent the same edge in $\Gcal(\nfk)$ if and only if there exists $\beta \in \Gamma_\infty$ such that 
$$(c':d') \equiv (c:d)\cdot \beta \ \ \bmod \nfk \quad \in \PP^1(A/\nfk).$$ 
In this case, we have $\nfk_\gamma = \nfk_{\gamma'}$, $r = r'$, and the edges $\gamma e_{r+n}$ and $\gamma' e_{r+n}$ represent the same edge in $\Gcal(\nfk)$ for every $n \in \ZZ_{\geq 0}$. Therefore the result follows.
\end{proof}

\begin{rem}
Algorithmic procedures to compute the quotient graph $\Gamma_0(\nfk) \backslash \sT$ given $\nfk \in A_+$ have been provided in \cite{N}, \cite{G-N}, \cite{T-R} and \cite{B}.
\end{rem}

Let $w_\nfk = \begin{pmatrix}0&-1\\\nfk&0\end{pmatrix} \in \GL_2(K)$. We end up this section by the following technical lemma:

\begin{lem}\label{lem: Key-Lem}
Suppose an element $\gamma = \begin{pmatrix}a&b \\ c&d\end{pmatrix} \in \Gamma$ is given.
\begin{enumerate}
\item\label{lem: Key-Lem1} Let $\ell = \max(\deg c, \deg d)$, and put $\epsilon = 1$ if $\deg c \geq \deg d$ and $0$ otherwise.
There exists $u \in K_\infty$ such that
the edge $[\gamma e_0] \in E(\Gcal(\nfk))$ can be represented by
$$ 
\begin{pmatrix}\pi_\infty^{2 \ell + \epsilon} & u \\ 0&1 \end{pmatrix} \begin{pmatrix}0&1 \\ \pi_\infty &0\end{pmatrix}^\epsilon.
$$
\item\label{lem: Key-Lem2}
Take $x,y \in A$ with $\gcd(x,y) = 1=\gcd(\nfk,cx+dy) $. Let $\delta = \max(\deg x, \deg y)$, and put $\epsilon = 0$ if $\deg x > \deg y$ and $1$ otherwise. There exists $u \in K_\infty$ such that the edge $[\gamma e_0] \in E(\Gcal(\nfk))$ can be represented by
$$ 
w_\nfk
\begin{pmatrix}\pi_\infty^{\deg \nfk + 2 \delta+\epsilon} & u \\ 0&1 \end{pmatrix} \begin{pmatrix}0&1\\ \pi_\infty&0\end{pmatrix}^\epsilon.$$
\end{enumerate}
\end{lem}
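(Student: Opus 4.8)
The plan is to compute an explicit representative for the edge $[\gamma e_0]$ in $E(\Gcal(\nfk))$ by applying the Iwasawa decomposition to the matrix $\gamma$ (for part~\eqref{lem: Key-Lem1}) or to $w_\nfk \gamma$ (for part~\eqref{lem: Key-Lem2}), normalizing the result into the upper-triangular shape used to name vertices and edges of $\sT$. Recall that an oriented edge $e_g$ is represented by $g$ modulo $K_\infty^\times \Ical_\infty$, so I am free to right-multiply $\gamma$ by any element of $\GL_2(O_\infty)$ and rescale by $K_\infty^\times$; the only subtlety is that passing through $\Ical_\infty$ rather than $\GL_2(O_\infty)$ distinguishes $e_g$ from $\bar e_g$, which is exactly what the exponent $\epsilon$ on $\begin{pmatrix}0&1\\\pi_\infty&0\end{pmatrix}$ will track.

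For part~\eqref{lem: Key-Lem1}, I would first write $\gamma = \begin{pmatrix}a&b\\c&d\end{pmatrix}$ and apply Iwasawa to its bottom row $(c,d)$: there is a $\kappa \in \GL_2(O_\infty)$ rotating $(c,d)$ to a multiple of $(0,1)$, and the normalizing unit is governed by whichever of $\deg c, \deg d$ is larger, i.e.\ by $\ell = \max(\deg c,\deg d)$. Writing $\nu_\infty$ in terms of degrees (so $\nu_\infty(c) = -\deg c$, etc.), the upper-triangular form produced has its diagonal entries with $\nu_\infty$-values differing by $2\ell$, after clearing the common $K_\infty^\times$-factor; the off-diagonal entry is some $u \in K_\infty$ whose precise value is immaterial to the statement. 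The case split $\deg c \geq \deg d$ versus $\deg c < \deg d$ is precisely the source of $\epsilon$: when $\deg c \geq \deg d$ the edge lands in the class requiring the extra factor $\begin{pmatrix}0&1\\\pi_\infty&0\end{pmatrix}$ to match the chosen orientation convention of Section~\ref{subsec: Tree}, and one reads off the claimed exponent $2\ell + \epsilon$ on $\pi_\infty$. This is essentially the same computation that underlies Lemma~\ref{lem: Dist}, so I would lean on the distance formula quoted there from \cite[p.\ 185]{Gek} to organize the $\nu_\infty$-bookkeeping.

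For part~\eqref{lem: Key-Lem2} I would replace $\gamma$ by $w_\nfk \gamma$ and repeat the argument, now tracking the bottom row of $w_\nfk \gamma = \begin{pmatrix}0&-1\\\nfk&0\end{pmatrix}\begin{pmatrix}a&b\\c&d\end{pmatrix} = \begin{pmatrix}-c&-d\\\nfk a&\nfk b\end{pmatrix}$. The hypothesis $\gcd(\nfk, cx+dy)=1$ with $\gcd(x,y)=1$ is what lets me choose, after right-multiplying by a suitable element of $\Gamma$ fixing the edge class, a representative whose relevant row is controlled by $(x,y)$ rather than by $(c,d)$ directly; this is where $\delta = \max(\deg x,\deg y)$ enters, and the extra $\deg\nfk$ in the exponent $\deg\nfk + 2\delta + \epsilon$ comes from the determinant $\nfk$ of $w_\nfk$ contributing $\nu_\infty(\nfk) = -\deg\nfk$ to the normalization. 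The orientation factor $\epsilon$ flips its defining inequality relative to part~\eqref{lem: Key-Lem1} because conjugating by $w_\nfk$ reverses the role of the two columns.

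I expect the main obstacle to be the second part's reduction step: justifying that, modulo $\Gamma_0(\nfk)$ acting on the left and the stabilizer relations \eqref{eq: stab}, the edge $[w_\nfk\gamma\, e_0]$ can be brought into a form depending only on the coprime pair $(x,y)$ satisfying the gcd condition, rather than on all entries of $\gamma$. Concretely, one must exhibit the element of $\Gamma$ realizing this change of representative and verify the coprimality constraints are preserved; the degree bookkeeping and the determination of $\epsilon$ are then routine but must be checked consistently against the edge-versus-opposite-edge convention fixed in Section~\ref{subsec: Tree}.
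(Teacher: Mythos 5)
Part~\eqref{lem: Key-Lem1} of your proposal is fine and matches the paper: the Iwasawa decomposition applied to the bottom row $(c,d)$, with the case split $\deg c \geq \deg d$ versus $\deg c < \deg d$ producing the exponent $2\ell+\epsilon$ and the orientation factor, is exactly the paper's computation.

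For part~\eqref{lem: Key-Lem2}, however, there is a genuine gap, and you have in fact flagged it yourself without closing it. The statement asserts that $[\gamma e_0]$ --- an equivalence class under \emph{left} multiplication by $\Gamma_0(\nfk)$ and \emph{right} multiplication by $K_\infty^\times \Ical_\infty$ --- admits a representative of the form $w_\nfk\left(\begin{smallmatrix}\pi_\infty^{\deg\nfk+2\delta+\epsilon}&u\\0&1\end{smallmatrix}\right)\left(\begin{smallmatrix}0&1\\\pi_\infty&0\end{smallmatrix}\right)^\epsilon$. Your plan of ``replacing $\gamma$ by $w_\nfk\gamma$ and tracking the bottom row of $w_\nfk\gamma$'' is not an allowed move: left multiplication by $w_\nfk$ does not preserve the edge class, and the exponent $\deg\nfk+2\delta+\epsilon$ does not come from the determinant of $w_\nfk$ acting on $\gamma$ in this way. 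What is actually needed is an explicit $\gamma_0\in\Gamma_0(\nfk)$ such that $w_\nfk^{-1}\gamma_0\gamma$ becomes upper-triangular after right multiplication by $K_\infty^\times\Ical_\infty$, with the correct $\pi_\infty$-exponent governed by $(x,y)$. This is the entire content of part~\eqref{lem: Key-Lem2}, and your proposal defers it as ``the main obstacle'' rather than resolving it.

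The paper's resolution is short but essential: first observe that $\gcd(ax+by,\,cx+dy)$ divides $\gcd(x,y)=1$, so the hypothesis $\gcd(\nfk,cx+dy)=1$ upgrades to $\gcd(\nfk(ax+by),\,cx+dy)=1$; then choose $\alpha,\beta\in A$ with $\alpha\nfk(ax+by)+\beta(cx+dy)=1$ and set
$$\gamma_0:=\begin{pmatrix}-(cx+dy)&ax+by\\ \alpha\nfk&\beta\end{pmatrix}\in\Gamma_0(\nfk).$$
A direct computation (using $\det\gamma=1$) shows that $\gamma_0\gamma$ has top row $(-y,x)$, and factoring it as $w_\nfk$ times an upper-triangular matrix with diagonal $(\nfk^{-1},-x)$ (or $(\nfk^{-1},y)$ in the other case) times an element of $K_\infty^\times\Ical_\infty$ yields the exponent $\deg\nfk+2\delta+\epsilon$: the $\deg\nfk$ comes from the $\nfk^{-1}$ entry and the $2\delta$ from the $x$ (resp.\ $y$) on the diagonal after normalization. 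Without exhibiting $\gamma_0$ and verifying this factorization, the degree bookkeeping you describe has nothing to attach to, so part~\eqref{lem: Key-Lem2} remains unproved in your write-up.
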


\begin{proof}
We may assume $\det \gamma =1$ without loss of generality.
Then \eqref{lem: Key-Lem1} directly follows from the Iwasawa decomposition:
$$
\begin{pmatrix}a&b \\ c&d \end{pmatrix}
=
\begin{cases}
\begin{pmatrix} d^{-2} & b/d \\ 0 & 1 \end{pmatrix}
\begin{pmatrix} d & 0 \\ 0 & d \end{pmatrix} \begin{pmatrix} 1 & 0 \\ d^{-1} c & 1\end{pmatrix} & \text{ if $\deg c < \deg d$,} \\
\begin{pmatrix} c^{-2}\theta^{-1} & a/c \\ 0 & 1 \end{pmatrix} \begin{pmatrix} 0 & \theta \\ 1 & 0 \end{pmatrix} \begin{pmatrix} c&0\\ 0&c\end{pmatrix} \begin{pmatrix} 1 & c^{-1} d \\ 0&-1\end{pmatrix} & \text{ if $\deg c \geq \deg d$,}
\end{cases}
$$
where $\begin{pmatrix} 1 & 0 \\ d^{-1} c & 1\end{pmatrix}$ (resp. $\begin{pmatrix} 1 & c^{-1} d \\ 0&-1\end{pmatrix}$) belongs to $ \Ical_\infty$ in the first (resp. second) case.
For \eqref{lem: Key-Lem2}, 
it is observed that $\text{gcd}(ax+by,cx+dy)$ divides $\text{gcd}(x,y)$, which is equal to $1$. Therefore we have $\gcd(\nfk(ax+by),cx+dy)=1$. Take $\alpha,\beta \in A$ so that
$\alpha \nfk (ax+by) + \beta (cx+dy) = 1$.
Then $$\gamma_0 := \begin{pmatrix} -(cx+dy)& ax+by \\ \alpha \nfk & \beta \end{pmatrix} \in \Gamma_0(\nfk),$$
and $\gamma_0 \gamma$ is equal to
$$
\begin{pmatrix}0&-1\\ \nfk&0\end{pmatrix}
\begin{pmatrix}\nfk^{-1} & \nfk^{-1}(\alpha b \nfk + \beta d) \\ 0& -x\end{pmatrix} \begin{pmatrix} x^{-1} & 0 \\ 0 & 1\end{pmatrix} \begin{pmatrix} 1 & 0 \\ -x^{-1} y & 1 \end{pmatrix}
$$
where $\begin{pmatrix} 1 & 0 \\ -x^{-1} y & 1 \end{pmatrix} \in \Ical_\infty$ if $\deg x > \deg y$, and
$$
\begin{pmatrix}0&-1\\ \nfk&0\end{pmatrix}
\begin{pmatrix}\nfk^{-1} & \nfk^{-1}(\alpha a \nfk + \beta c) \\ 0& y\end{pmatrix} \begin{pmatrix} y^{-1}\theta^{-1} & 0 \\ 0 & 1\end{pmatrix}
\begin{pmatrix} 0&\theta \\ 1&0\end{pmatrix} \begin{pmatrix} 1 & -y^{-1} x \\ 0 & 1 \end{pmatrix}
$$
where $\begin{pmatrix} 1 & -y^{-1} x \\ 0 & 1 \end{pmatrix} \in \Ical_\infty$ if $\deg x \leq \deg y$.
Take
$$
u := \begin{cases}
-(x\nfk)^{-1}(\alpha b\nfk+\beta d) & \text{ if $\deg x > \deg y$,} \\
(y\nfk)^{-1}(\alpha a\nfk + \beta c) & \text{ otherwise.}
\end{cases}
$$
Then the edge $[\gamma_0 \, \gamma\, e_0]$ of $\Gcal(\nfk)$ can be represented by
$$ 
w_\nfk
\begin{pmatrix}\pi_\infty^{\deg \nfk + 2 \delta+\epsilon} & u \\ 0&1 \end{pmatrix}  \begin{pmatrix}0&1\\ \pi_\infty&0\end{pmatrix}^\epsilon.$$
Thus the result holds.
\end{proof}

\section{Harmonic cochains and Fourier expansion}\label{Sec-Har}

We recall the definition and the needed properties of  harmonic cochains on $\sT$.

\subsection{Harmonic cochains}

\begin{defn}
A $\CC$-valued function $f$ on $E(\sT)$ is called a \emph{harmonic cochain} if $f$ satisfies the following \emph{harmonicity property}:
$$\forall e \in E(\sT)\  \ \forall v \in V(\sT), \quad f(e)+f(\bar{e}) = 0 = \sum_{\subfrac{e_v \in E(\sT)}{o(e_v) = v}} f(e_v).$$
If $G$ is a subgroup of $\Gamma$, we say that $f$ is \emph{$G$-invariant} if \[
 \forall \gamma \in G \ \ \forall e \in E(\sT),\quad f(\gamma e ) = f(e).
\]

For $\nfk \in A_+$, let $\Hcal(\nfk)$ be the space of $\Gamma_0(\nfk)$-invariant $\CC$-valued harmonic cochains. An element of $\Hcal(\nfk)$ can be seen as a $\CC$-valued function on $E(\Gcal(\nfk))$.
We call $f$ \emph{cuspidal} if $f$ is finitely supported as a $\CC$-valued function on $E(\Gcal(\nfk))$. The subspace of cuspidal harmonic cochains in $\Hcal(\nfk)$ is denoted by $\Hcal_0(\nfk)$.
\end{defn}

\begin{rem}\label{rem: Har}
Given $\nfk \in A_+$, it is known that:
\begin{enumerate}
\item\label{rem: HarSuppGO} Every $f \in \Hcal_0(\nfk)$ is supported on the finite graph $E(\Gcal(\nfk)^o)$ by harmonicity and Theorem~\ref{thm: Quo-G-dec}.
\item $\dim_\CC \Hcal_0(\nfk)$ is equal to $g(\Gcal(\nfk))$, the genus of the graph $\Gcal(\nfk)$ (cf. \cite[3.2.5]{G-R}, and \cite[Th.~2.17]{G-N} for a formula for this genus).
\item For each cusp $s \in \Ccal(\nfk)$, choose $\gamma_s \in \Gamma$ and $\ell_s \in \ZZ_{\geq 0}$ as in Theorem~\ref{thm: Quo-G-dec}. 
Then we have the following exact sequence (cf.\ \cite[p.~277]{Tei}):
$$
\xymatrix{
0 \ar[r] & \Hcal_0(\nfk) \ar[r] & \Hcal(\nfk) \ar[r]^-{c} & \displaystyle\prod_{[0:1] \neq s \in \Ccal(\nfk)} \CC \ar[r]&  0,
}
$$

where $c(f) := \big(f(\gamma_s e_{\ell_s})\mid [0:1] \neq s \in \Ccal(\nfk)\big)$.
In particular, 
$$\dim_\CC \Hcal(\nfk) = g(\Gcal(\nfk)) + \#(\Ccal(\nfk)) -1.$$
\end{enumerate}
\end{rem}

The following result will be a key-lemma for proving our Sturm-type bounds.
\begin{lem}\label{lem: UD-Lem}
Every harmonic cochain in $\Hcal(\nfk)$ (resp.\ $\Hcal_0(\nfk)$) is uniquely determined by its values at the edges $\gamma e_0$ for all $\gamma \in \Gamma_0(\nfk)\backslash \Gamma$ (resp.\ with $\deg \nfk_\gamma \geq 2$).
\end{lem}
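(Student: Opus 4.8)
The plan is to establish the statement for $\Hcal(\nfk)$ by a single induction that propagates the values of $f$ from high-level edges down to level $0$, and then to obtain the cuspidal refinement from the support condition of Remark~\ref{rem: Har}.

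First I would reduce the problem to controlling the edges $\gamma e_r$. By the representatives in \eqref{eq: Rep}, every oriented edge of $\Gcal(\nfk)$ is of the form $[\gamma e_r]$ or $[\gamma \bar e_r]$ with $r\geq 0$ and $\gamma\in\Gamma_0(\nfk)\backslash\Gamma$; since harmonicity gives $f(\gamma\bar e_r)=-f(\gamma e_r)$, it suffices to show that the values $f(\gamma e_r)$ for all $r\geq 0$ are determined by the values $f(\gamma e_0)$. I would prove this by induction on $r$, the case $r=0$ being the hypothesis.

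The heart of the inductive step is a local description of the $q+1$ oriented edges of $\sT$ issuing from $v_r$ for $r\geq 1$. Using the stabilizers from Theorem~\ref{thm: Weil-dec}, namely $\Stab_\Gamma(v_r)=\Stab_\Gamma(e_r)=\Gamma_\infty^{(r)}$ and $\Stab_\Gamma(\bar e_{r-1})=\Gamma_\infty^{(r-1)}$, one checks that $\Gamma_\infty^{(r)}$ fixes $e_r$ while the orbit of $\bar e_{r-1}$ under $\Gamma_\infty^{(r)}$ has length $[\Gamma_\infty^{(r)}:\Gamma_\infty^{(r-1)}]=q$; since $\sT$ is $(q+1)$-regular, these exhaust the edges at $v_r$. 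Applying the tree automorphism $\gamma$ and then the harmonicity relation at the vertex $\gamma v_r$ yields
$$ f(\gamma e_r)+\sum_{\eta}f(\gamma\eta\bar e_{r-1})=0, $$
where $\eta$ runs over a fixed set of representatives of $\Gamma_\infty^{(r)}/\Gamma_\infty^{(r-1)}$. As $f(\gamma\eta\bar e_{r-1})=-f(\gamma\eta e_{r-1})$ and each $\gamma\eta e_{r-1}$ is a level-$(r-1)$ edge, this gives $f(\gamma e_r)=\sum_\eta f(\gamma\eta e_{r-1})$, expressing a level-$r$ value in terms of level-$(r-1)$ values and completing the induction. As a sanity check, for $\nfk=1$ all $\gamma\eta e_{r-1}$ collapse to $[e_{r-1}]$ and the relation becomes $f(e_r)=q\,f(e_{r-1})$.

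Finally, for $\Hcal_0(\nfk)$ I would invoke Remark~\ref{rem: Har}\eqref{rem: HarSuppGO}: a cuspidal cochain is supported on the finite subgraph $\Gcal(\nfk)^o$. By Theorem~\ref{thm: Quo-G-dec} the edge $[\gamma e_0]$ lies in $\Gcal(\nfk)^o$ precisely when $0\leq \deg\nfk_\gamma-2$, i.e.\ when $\deg\nfk_\gamma\geq 2$; when $\deg\nfk_\gamma\leq 1$ the edge $[\gamma e_0]$ lies on an end (there $\ell_s=0$), so $f(\gamma e_0)=0$ automatically. Hence the level-$0$ values with $\deg\nfk_\gamma\leq 1$ vanish, and the induction above shows that $f$ is already determined by the remaining values $f(\gamma e_0)$ with $\deg\nfk_\gamma\geq 2$. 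I expect the one genuinely delicate point to be the edge/orbit computation at $v_r$ via the stabilizers of Theorem~\ref{thm: Weil-dec}; once that local picture is secured, the rest is a formal induction plus the support statement for cusp forms.
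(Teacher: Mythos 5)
Your argument is correct and is essentially the paper's own proof spelled out in full: the paper derives the $\Hcal(\nfk)$ case by \lq\lq combining \eqref{eq: Rep}, \eqref{eq: stab} and the harmonicity property\rq\rq\ (citing \cite[2.13]{G-N}), which is precisely your induction on $r$ via the orbit computation $\Gamma_\infty^{(r)}\cdot\bar e_{r-1}$ of size $[\Gamma_\infty^{(r)}:\Gamma_\infty^{(r-1)}]=q$, and the cuspidal refinement is handled identically via Theorem~\ref{thm: Quo-G-dec} and Remark~\ref{rem: Har}\eqref{rem: HarSuppGO}. No gaps.
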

\begin{proof}
For $\Hcal(\nfk)$, this can be derived from \cite[2.13]{G-N} or similarly by combining \eqref{eq: Rep}, \eqref{eq: stab} and the harmonicity property. Moreover assume that $\deg \nfk_\gamma <2$. By Theorem~\ref{thm: Quo-G-dec}, the edge $[\gamma e_0]$ does not belong to $E(\Gcal(\nfk)^o)$ hence it belongs to an end of $\Gcal(\nfk)$. Any cuspidal $\Gamma_0(\nfk)$-invariant harmonic cochain vanishes on it by Remark~\ref{rem: Har} \eqref{rem: HarSuppGO}. This proves the result for $\Hcal_0(\nfk)$. See also \cite[Prop.~3.2]{G-N} for a related statement.
\end{proof}

For each divisor $\mfk$ of $\nfk$, we recall the \emph{Atkin-Lehner involution} $W_\mfk$ on $f \in \Hcal(\nfk)$ which is defined by
$$\forall e \in E(\sT),\quad (f|W_{\mfk})(e):= f\left(\begin{pmatrix} s \mfk & t \\ u\nfk & v\mfk \end{pmatrix} e\right),$$
where $s,t,u,v \in A$ with $sv \mfk^2 - ut \nfk = \mfk $. Note that the operator $W_\mfk$ is independent of the chosen $s,t,u,v$. In the particular the involution $W_\nfk$ on $\Hcal(\nfk)$ is defined by
$$\forall e \in E(\sT),\quad (f|W_{\nfk})(e):= f\left(w_\nfk  e\right)= f\left(\begin{pmatrix}0&-1\\\nfk&0\end{pmatrix}e\right).$$

\subsection{Fourier expansion}\label{Sec-Fourier}

Let $\psi: K_\infty \rightarrow \CC^\times$ be the additive character defined by
$$\psi\left(\sum_n a_n \pi_\infty^n\right) := \exp\left(\frac{2 \pi \sqrt{-1}}{p} \text{Trace}_{\FF_q/\FF_p}(a_1)\right),$$
where $p$ denotes the characteristic of $\FF_q$.
In particular, the ring $A$ is self-dual with respect to $\psi$, i.e. $A^\vee := \{x \in K_\infty \mid \forall a \in A, \psi(ax) = 1 \} = A$. 

Let $f$ be a $\Gamma_\infty$-invariant $\CC$-valued harmonic cochain. Viewing $f$ as a $\CC$-valued function on $\GL_2(K_\infty)$,
the Fourier expansion of $f$
is given by (cf.\ \cite[Chapter III]{Weil2}):
$$\forall r \in \ZZ \ \ \forall u \in K_\infty, \quad f\begin{pmatrix} \pi_\infty^r & u \\ 0&1\end{pmatrix} = \sum_{m \in A} f^*(r,m) \psi(mu)$$
where
$$
f^*(r,m) := \int_{A\backslash K_\infty} f\begin{pmatrix} \pi_\infty^r & u \\ 0 & 1\end{pmatrix} \psi(- mu) du,
$$
and the Haar measure $du$ is chosen to be self-dual with respect to $\psi$, i.e.\ $\text{vol}(A\backslash K_\infty,du) = 1$. Let $$c_0(f):= f^*(2,0)$$ and $$\forall \mfk \in A_+,\quad c_\mfk(f):= |\mfk|_\infty \cdot f^*(\deg \mfk+2,\mfk)$$
(this normalization differs from \cite{Gek2}). The harmonicity property implies the following properties on the Fourier coefficients (cf.\ \cite[Section~2]{Gek-Imp}, \cite[Section~2]{R-T}):

\begin{prop}
Let $f$ be a $\Gamma_\infty$-invariant $\CC$-valued harmonic cochain. For $m \in A$ we have 
\begin{enumerate}
    \item $f^*(r,m) = 0$ unless $r \geq \deg m +2$.
    \item $f^*(\deg m+2+\ell, \varepsilon m) = q^{-\ell} f^*(\deg m+2,m)$ for all $\ell \in \ZZ_{\geq 0}$ and $\varepsilon \in \FF_q^\times$.
    \item 
$f$ is identically zero if and only if $c_0(f)=0$ and for every $\mfk \in A_+$, $c_\mfk(f) = 0$.
\end{enumerate}
\end{prop}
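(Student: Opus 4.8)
The plan is to package $f$ into the single function $\phi(r,u):=f\begin{pmatrix}\pi_\infty^r&u\\0&1\end{pmatrix}$ on $\ZZ\times K_\infty$ and to read off all three assertions from the harmonicity property after taking Fourier coefficients in $u$. First I would record two consequences of $\Gamma_\infty$-invariance. Since $\begin{pmatrix}1&b\\0&1\end{pmatrix}\in\Gamma_\infty$ for $b\in A$, the map $u\mapsto\phi(r,u)$ is $A$-periodic, which is what makes the Fourier expansion over $A\backslash K_\infty$ with $m\in A$ legitimate. Since $\begin{pmatrix}a&0\\0&d\end{pmatrix}\in\Gamma_\infty$ for $a,d\in\FF_q^\times$, a short Iwahori normalization gives $\phi(r,\lambda u)=\phi(r,u)$ for every $\lambda\in\FF_q^\times$; substituting $u\mapsto\lambda u$ in the integral defining $f^*(r,m)$ then yields the scaling invariance $f^*(r,m)=f^*(r,\lambda m)$, and in particular $f^*(r,\varepsilon m)=f^*(r,m)$.

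The key geometric input is an explicit description of the $q+1$ oriented edges issuing from the vertex $[r,u]$ represented by $\begin{pmatrix}\pi_\infty^r&u\\0&1\end{pmatrix}$. Using the formulas for $o(e_g)$ and $t(e_g)$ from Section~\ref{subsec: Tree} and the Iwasawa decomposition, I would check that these edges are $e_g$, pointing to $[r-1,u]$, together with the reversals $\bar{e}_{g_c}$ of the edges $e_{g_c}$ attached to $g_c=\begin{pmatrix}\pi_\infty^{r+1}&u+c\pi_\infty^r\\0&1\end{pmatrix}$ for $c\in\FF_q$, pointing to $[r+1,u+c\pi_\infty^r]$. Substituting into the vertex relation $\sum_{o(e)=v}f(e)=0$ and using $f(\bar{e})=-f(e)$ would give the master recursion
$$\phi(r,u)=\sum_{c\in\FF_q}\phi(r+1,\,u+c\pi_\infty^r),\qquad (r,u)\in\ZZ\times K_\infty.$$
Taking the $m$-th Fourier coefficient of both sides, the inner translation produces the character sum $\sum_{c\in\FF_q}\psi\big(c\,m\pi_\infty^r\big)$, which equals $q$ when the coefficient of $\theta^{r-1}$ in $m$ vanishes and $0$ otherwise. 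Hence $f^*(r,m)=q\,f^*(r+1,m)$ when that coefficient of $m$ is zero, and $f^*(r,m)=0$ when it is nonzero.

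Both (1) and (2) then fall out of this one relation. For $r\le\deg m+1$ I would run a downward induction starting at $r=\deg m+1$: there the coefficient of $\theta^{r-1}=\theta^{\deg m}$ in $m$ is its nonzero leading coefficient, so $f^*(\deg m+1,m)=0$, and the recursion propagates the vanishing to all smaller $r$, giving (1). For $r\ge\deg m+2$ the coefficient of $\theta^{r-1}$ in $m$ is always zero, so $f^*(r+1,m)=q^{-1}f^*(r,m)$ throughout that range; iterating gives $f^*(\deg m+2+\ell,m)=q^{-\ell}f^*(\deg m+2,m)$, and feeding in $\deg(\varepsilon m)=\deg m$ together with the scaling invariance $f^*(\,\cdot\,,\varepsilon m)=f^*(\,\cdot\,,m)$ yields (2).

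For (3) only the reverse implication needs work. Assuming $c_0(f)=0$ and $c_\mfk(f)=0$ for all $\mfk\in A_+$, I would write a nonzero $m\in A$ as $m=\varepsilon\mfk$ with $\mfk\in A_+$ monic and $\varepsilon\in\FF_q^\times$; then (1), (2) and the scaling invariance show that every $f^*(r,m)$ with $m\ne0$ is a power of $q$ times $c_\mfk(f)$ and so vanishes, while for $m=0$ the recursion gives $f^*(r,0)=q^{2-r}f^*(2,0)=q^{2-r}c_0(f)=0$. Thus $f^*(r,m)=0$ for all $r,m$, so $\phi\equiv0$ and $f$ vanishes on every edge $e_{[r,u]}$, hence also on every $\bar{e}_{[r,u]}$. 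The final point, and the one I would treat most carefully, is that this forces $f\equiv0$: by the Iwasawa decomposition every vertex of $\sT$ is of the form $[r,u]$, so every oriented edge is some $e_{[r,u]}$ or a reversal thereof, and $f$ is zero on all of them. I expect the only genuinely delicate step to be the edge bookkeeping behind the master recursion; the character sum and the two inductions are then routine.
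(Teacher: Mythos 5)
Your proof is correct, and since the paper offers no proof of this proposition (it simply cites Gekeler's \emph{Improper Eisenstein series} and R\"uck--Tipp), there is nothing in the text to diverge from: your argument --- harmonicity at the vertices represented by $\begin{pmatrix}\pi_\infty^r&u\\0&1\end{pmatrix}$ giving the recursion $\phi(r,u)=\sum_{c\in\FF_q}\phi(r+1,u+c\pi_\infty^r)$, whose Fourier transform produces the character sum $\sum_{c}\psi(cm\pi_\infty^r)$ detecting the coefficient of $\theta^{r-1}$ in $m$ --- is exactly the standard computation carried out in those references. All the delicate points (the identification of the full star at $[r,u]$, the $\FF_q^\times$-scaling invariance from the Iwahori normalization, and the reduction of every oriented edge to an $e_{[r,u]}$ or its reversal) check out.
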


In particular, given a $\Gamma_\infty$-invariant $\CC$-valued harmonic cochain $f$, the Fourier expansion of $f$ can be written as:
\begin{eqnarray}\label{eqn: Four-exp}
\forall r\in\ZZ, \forall u\in K_\infty,\quad
f\begin{pmatrix} \pi_\infty^r & u \\ 0&1\end{pmatrix}
= q^{-r+2}\cdot \left(
c_0(f) + \sum_{\subfrac{\mfk \in A_+}{\deg \mfk +2 \leq r}} c_\mfk(f) \Psi(\mfk u)\right),
\end{eqnarray}
where $\Psi(x) := \sum_{\varepsilon \in \FF_q^{\times}} \psi(\varepsilon x) \in \{-1,q-1\}$.

\begin{rem}\label{rem: cOcusp}
Every $f \in \Hcal_0(\nfk)$ satisfies $c_0(f)=0$. Indeed $f$ is supported on $E(\Gcal(\nfk)^o)$ by Remark~\ref{rem: Har} \eqref{rem: HarSuppGO}, $[e_0] \notin E(\Gcal(\nfk)^o)$ by Theorem~\ref{thm: Quo-G-dec}, and $c_0(f) = q^{-2} f(e_0)$ by \eqref{eqn: Four-exp}.
\end{rem}

\section{Sturm-type bound for harmonic cochains}\label{Sec: Sbound}

The aim of this section is to find a Sturm-type bound for harmonic cochains in $\Hcal(\nfk)$ when a level $\nfk \in A_+$ is given.

\subsection{The cuspidal case}\label{subsec: Cusp}

\subsubsection{The general bound}
By Lemma~\ref{lem: UD-Lem}, any given $f \in \Hcal_0(\nfk)$ is uniquely determined by its values at $\gamma e_0$ for all $\gamma \in \Gamma_0(\nfk) \backslash \Gamma$ with $\deg \nfk_\gamma \geq 2$.
Without loss of generality, we may assume $\gamma = \begin{pmatrix} a & b \\ c&d \end{pmatrix}$ with $c \mid \nfk$ and $\deg d < \deg \nfk$.
By Lemma~\ref{lem: Key-Lem} \eqref{lem: Key-Lem1}, there exists $u \in K_\infty$ such that
$$f(\gamma e_0) = \begin{cases}
f\begin{pmatrix} \pi_\infty^{2\deg d} & u \\ 0&1\end{pmatrix} & \text{ if $\deg c < \deg d$,} \\
- f \begin{pmatrix} \pi_\infty^{2\deg c + 1} & u \\ 0&1\end{pmatrix} & \text{ if $\deg c \geq \deg d$.}
\end{cases}
$$
Since $2 \leq \deg \nfk_\gamma \leq \deg \nfk - \deg c$ and $\deg d < \deg \nfk$, one has that $f$ is uniquely determined by
$$\left\{ f\begin{pmatrix} \pi_\infty^r & u \\ 0& 1\end{pmatrix} |\ 2 \leq r \leq 2\deg \nfk -2 \text{ and } u \in K_\infty \right\}.
$$
From Remark~\ref{rem: cOcusp} and the Fourier expansion \eqref{eqn: Four-exp}, we conclude:

\begin{prop}\label{prop: SBound-0}
Let $\nfk\in A_+$. Then $f \in \Hcal_0(\nfk)$ is identically zero if $c_\mfk(f) = 0$ for all $\mfk \in A_+$ with $\deg \mfk \leq 2\deg \nfk-4$.
\end{prop}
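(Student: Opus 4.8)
The plan is to prove Proposition~\ref{prop: SBound-0} by tracing the reduction steps that the excerpt has already set up and then reading off the Fourier-coefficient consequence. First I would invoke Lemma~\ref{lem: UD-Lem}: any $f \in \Hcal_0(\nfk)$ is uniquely determined by its values at the edges $\gamma e_0$ for $\gamma \in \Gamma_0(\nfk)\backslash\Gamma$ with $\deg \nfk_\gamma \geq 2$. Because $\Gamma_0(\nfk)\backslash\Gamma \cong \PP^1(A/\nfk)$, I may choose the representative $\gamma = \begin{pmatrix} a & b \\ c & d\end{pmatrix}$ with $c \mid \nfk$ and $\deg d < \deg \nfk$, which is the normalization used in the proof of Theorem~\ref{thm: Quo-G-dec}.

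Next I would apply Lemma~\ref{lem: Key-Lem}\eqref{lem: Key-Lem1} with $\ell = \max(\deg c, \deg d)$ to rewrite $f(\gamma e_0)$ in upper-triangular (Fourier) coordinates. In the case $\deg c < \deg d$ this gives the value $f\begin{pmatrix} \pi_\infty^{2\deg d} & u \\ 0 & 1\end{pmatrix}$, and in the case $\deg c \geq \deg d$ it gives $-f\begin{pmatrix} \pi_\infty^{2\deg c+1} & u \\ 0 & 1\end{pmatrix}$ (the sign coming from the $\epsilon=1$ opposite-edge correction $\begin{pmatrix}0&1\\\pi_\infty&0\end{pmatrix}$ and the harmonicity relation $f(\bar e) = -f(e)$). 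The key step is then to bound the exponent $r$ uniformly. Since $\deg \nfk_\gamma \geq 2$ means $\deg\gcd(c^2,\nfk) \leq \deg\nfk - 2$, and since $\deg c \le \deg\gcd(c^2,\nfk)$ when $c\mid \nfk$, one gets $\deg c \leq \deg \nfk - 2$; combined with $\deg d < \deg \nfk$ this yields $r \leq 2\deg\nfk - 2$ in both cases. Thus $f$ is determined by the values $f\begin{pmatrix}\pi_\infty^r & u \\ 0 & 1\end{pmatrix}$ for $2 \leq r \leq 2\deg\nfk - 2$ and $u \in K_\infty$.

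Finally I would feed these constraints into the Fourier expansion \eqref{eqn: Four-exp}. By Remark~\ref{rem: cOcusp} we already know $c_0(f) = 0$, so \eqref{eqn: Four-exp} shows that $f\begin{pmatrix}\pi_\infty^r & u \\ 0 & 1\end{pmatrix}$ depends only on the coefficients $c_\mfk(f)$ with $\deg\mfk + 2 \leq r$. Since $r$ ranges up to $2\deg\nfk - 2$, only the coefficients $c_\mfk(f)$ with $\deg\mfk \leq 2\deg\nfk - 4$ enter. Hence if all these vanish, then $f$ vanishes on the determining set of edges, and by Lemma~\ref{lem: UD-Lem} we conclude $f \equiv 0$.

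The routine parts are the two Iwasawa computations, which are already packaged in Lemma~\ref{lem: Key-Lem}, and the passage through \eqref{eqn: Four-exp}. The only step requiring genuine care is the uniform exponent bound: one must verify that the width condition $\deg \nfk_\gamma \geq 2$ together with the chosen representative ($c \mid \nfk$, $\deg d < \deg \nfk$) forces both $2\deg d$ and $2\deg c + 1$ to be at most $2\deg\nfk - 2$. This is where the hypothesis that $f$ is \emph{cuspidal} is used essentially, since it is precisely the restriction to $\deg\nfk_\gamma \geq 2$ (equivalently, to edges of the finite core $\Gcal(\nfk)^o$) supplied by Lemma~\ref{lem: UD-Lem} and Remark~\ref{rem: Har}\eqref{rem: HarSuppGO} that caps the exponent; without cuspidality the ends contribute arbitrarily large $r$ and no such bound exists.
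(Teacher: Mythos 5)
Your proposal is correct and follows essentially the same route as the paper: Lemma~\ref{lem: UD-Lem} to reduce to the edges $\gamma e_0$ with $\deg\nfk_\gamma \geq 2$, the normalization $c \mid \nfk$, $\deg d < \deg\nfk$, Lemma~\ref{lem: Key-Lem}\eqref{lem: Key-Lem1} to pass to upper-triangular coordinates with exponent $r \leq 2\deg\nfk-2$, and then Remark~\ref{rem: cOcusp} together with the Fourier expansion \eqref{eqn: Four-exp}. Your justification of the exponent bound via $\deg c \leq \deg\gcd(c^2,\nfk) \leq \deg\nfk-2$ is exactly the paper's inequality $2 \leq \deg\nfk_\gamma \leq \deg\nfk - \deg c$ in slightly different notation.
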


However the bound $2\deg \nfk-4$ seems larger than $\log_q \left( \dim_\CC \Hcal_0(\nfk)\right)$ as $\deg \nfk$ increases. 
In the following, we shall derive a smaller bound using the Fourier expansion with respect to the cusp $[1:0] \in \Ccal(\nfk)$.

\begin{defn}\label{defn: l(n)}
For $c,d \in A$ with $\text{gcd}(c,d) = 1$, let 
$$
\delta_\nfk(c,d):= \min \{\max(\deg x, \deg y)\mid \text{gcd}(cx+dy,\nfk) = 1\}.$$
Set $\epsilon_\nfk(c,d) := 0$ if there are $x_0,y_0 \in A$ satisfying 
$$\deg y_0 < \deg x_0 = \delta_\nfk(c,d) \quad \text{ and } \quad   \text{gcd}(cx_0+dy_0,\nfk) =1,$$ 
and $\epsilon_\nfk(c,d) := 1$ otherwise.
We define
$$\ell(\nfk) := \max\big\{2\delta_\nfk(c,d)+\epsilon_\nfk(c,d)\ \big|\ [c:d] \in \Ccal(\nfk)\big\}.$$
\end{defn}

\begin{lem}\label{lem: Key-Lem.2}
Given $\gamma = \begin{pmatrix} a&b \\ c&d\end{pmatrix} \in \Gamma$, there exists $u \in K_\infty$ such that the edge $[\gamma e_0] \in E(\Gcal(\nfk))$ is represented by
$$ w_\nfk \begin{pmatrix} \pi_\infty^{\deg \nfk + 2 \delta_\nfk(c,d) + \epsilon_\nfk(c,d)} & u \\ 0&1\end{pmatrix} \begin{pmatrix} 0 & 1 \\ \pi_\infty & 0 \end{pmatrix}^{\epsilon_\nfk(c,d)}.$$
\end{lem}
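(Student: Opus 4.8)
The plan is to deduce this statement directly from Lemma~\ref{lem: Key-Lem}\eqref{lem: Key-Lem2} by choosing the pair $x,y$ appearing there to be an \emph{optimal} pair realizing the quantities $\delta_\nfk(c,d)$ and $\epsilon_\nfk(c,d)$ of Definition~\ref{defn: l(n)}. First I would record that the set of pairs $(x,y)$ with $\gcd(cx+dy,\nfk)=1$ is nonempty: since $\gcd(c,d)=1$, Bézout produces $x,y\in A$ with $cx+dy=1$, and $\gcd(1,\nfk)=1$. As $\max(\deg x,\deg y)$ then ranges over a nonempty set of nonnegative integers, the minimum $\delta_\nfk(c,d)$ is attained by some pair $(x_0,y_0)$.

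The key preliminary observation is that any minimizing pair is automatically coprime, which is exactly the hypothesis $\gcd(x,y)=1$ needed to invoke Lemma~\ref{lem: Key-Lem}\eqref{lem: Key-Lem2}. Indeed, writing $g:=\gcd(x_0,y_0)$ and $x_0=gx_0'$, $y_0=gy_0'$ with $\gcd(x_0',y_0')=1$, one has $cx_0+dy_0=g(cx_0'+dy_0')$, so $\gcd(cx_0'+dy_0',\nfk)=1$ as well, while $\max(\deg x_0',\deg y_0')=\delta_\nfk(c,d)-\deg g$. By minimality of $\delta_\nfk(c,d)$ this forces $\deg g=0$, i.e.\ $\gcd(x_0,y_0)=1$.

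It then remains to match the exponent $\epsilon$, which I would handle by splitting into two cases. If $\epsilon_\nfk(c,d)=0$, Definition~\ref{defn: l(n)} provides a pair with $\deg y_0<\deg x_0=\delta_\nfk(c,d)$ and $\gcd(cx_0+dy_0,\nfk)=1$; this pair is a minimizer, hence coprime by the previous paragraph, and feeding it into Lemma~\ref{lem: Key-Lem}\eqref{lem: Key-Lem2} yields $\delta=\delta_\nfk(c,d)$ and $\epsilon=0=\epsilon_\nfk(c,d)$. If $\epsilon_\nfk(c,d)=1$, I take any minimizer $(x_0,y_0)$: since no admissible pair satisfies $\deg y_0<\deg x_0=\delta_\nfk(c,d)$, a minimizer cannot have $\deg x_0>\deg y_0$, so $\deg x_0\leq\deg y_0=\delta_\nfk(c,d)$, and Lemma~\ref{lem: Key-Lem}\eqref{lem: Key-Lem2} gives $\delta=\delta_\nfk(c,d)$ and $\epsilon=1=\epsilon_\nfk(c,d)$. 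In both cases the representative produced is precisely the one claimed.

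The only genuine subtlety, and the step I would watch most carefully, is the bookkeeping around $\epsilon$: one must check that the asymmetric condition $\deg y_0<\deg x_0$ defining $\epsilon_\nfk(c,d)$ is consistent with the convention $\epsilon=0 \iff \deg x>\deg y$ in Lemma~\ref{lem: Key-Lem}\eqref{lem: Key-Lem2}, and that when $\epsilon_\nfk(c,d)=1$ every minimizer truly lies in the regime $\deg x\leq\deg y$ rather than being ruled out on a degenerate technicality (e.g.\ a vanishing coordinate, using $\deg 0=-\infty$). Everything else is a direct substitution into the already established Lemma~\ref{lem: Key-Lem}\eqref{lem: Key-Lem2}.
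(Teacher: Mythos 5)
Your proof is correct and takes essentially the same route as the paper's: the paper likewise picks a pair $(x_0,y_0)$ realizing $\delta_\nfk(c,d)$, observes that minimality forces $\gcd(x_0,y_0)=1$, and then invokes Lemma~\ref{lem: Key-Lem}\eqref{lem: Key-Lem2}. You have merely spelled out the coprimality argument and the case analysis matching the exponent $\epsilon$ of Lemma~\ref{lem: Key-Lem}\eqref{lem: Key-Lem2} to $\epsilon_\nfk(c,d)$, both of which the paper leaves implicit.
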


\begin{proof}
Take $x_0,y_0 \in A$ with $\max(\deg x_0,\deg y_0) = \delta_\nfk(c,d)$ and $\text{gcd}(cx_0+dy_0,\nfk) = 1$.
We must have $\text{gcd}(x_0,y_0) = 1$. Then the result follows directly from Lemma~\ref{lem: Key-Lem}.
\end{proof}

\begin{rem}\label{rem: min-dis}
Suppose $\deg \nfk > 0$.
For $\gamma = \begin{pmatrix} a&b \\ c&d\end{pmatrix} \in \Gamma$,
by Lemma~\ref{lem: Dist} we can actually show that 
$\min\big\{d(\gamma_0 \gamma v_0, w_\nfk v_0)\mid \gamma_0 \in \Gamma_0(\nfk)\big\} = \deg \nfk + 2 \delta_\nfk(c,d)-1$. In other words, the minimal distance between the vertices $[\gamma v_0]$ and $[w_\nfk v_0]$ in the quotient graph $\Gcal(\nfk)$ is $\deg \nfk + 2 \delta_\nfk(c,d)-1$.
\end{rem}

We then have:

\begin{prop}\label{prop: Sbound.1}
Let $\nfk \in A_+$. Then $f \in \Hcal_0(\nfk)$ is identically zero if $c_\mfk(f) = 0$ for all $\mfk \in A_+$ with $\deg \mfk \leq \deg \nfk -2 +\ell(\nfk)$. 
\end{prop}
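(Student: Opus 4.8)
The plan is to combine the uniqueness criterion of Lemma~\ref{lem: UD-Lem} with the Fourier expansion \eqref{eqn: Four-exp} \emph{taken at the cusp $[1:0]$}, that is, after applying the Atkin-Lehner involution $W_\nfk$. By Lemma~\ref{lem: UD-Lem}, a given $f \in \Hcal_0(\nfk)$ is uniquely determined by its values $f(\gamma e_0)$ for all $\gamma = \begin{pmatrix} a&b \\ c&d\end{pmatrix} \in \Gamma_0(\nfk)\backslash\Gamma$ with $\deg \nfk_\gamma \geq 2$; in particular it suffices to show that every such value vanishes under the stated hypothesis. The key is that Lemma~\ref{lem: Key-Lem.2} lets me rewrite each edge $[\gamma e_0]$ in the \emph{twisted} form
$$
[\gamma e_0] = w_\nfk \begin{pmatrix} \pi_\infty^{\deg \nfk + 2\delta_\nfk(c,d)+\epsilon_\nfk(c,d)} & u \\ 0 & 1 \end{pmatrix}\begin{pmatrix} 0 & 1 \\ \pi_\infty & 0 \end{pmatrix}^{\epsilon_\nfk(c,d)}
$$
for a suitable $u \in K_\infty$, so that $f(\gamma e_0)$ is expressed through the values of the $\Gamma_\infty$-invariant harmonic cochain $f|W_\nfk$ on standard edges $\begin{pmatrix}\pi_\infty^r & u \\ 0 & 1\end{pmatrix}$.

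First I would set $g := f|W_\nfk$, which is again a $\Gamma_\infty$-invariant $\CC$-valued harmonic cochain, and record that $g \in \Hcal_0(\nfk)$ as well (the Atkin-Lehner involution preserves cuspidality). Writing $r := \deg \nfk + 2\delta_\nfk(c,d) + \epsilon_\nfk(c,d)$ and using $f(w_\nfk e) = g(e)$ together with $f(\bar e) = -f(e)$ to absorb the extra factor $\begin{pmatrix} 0 & 1 \\ \pi_\infty & 0 \end{pmatrix}^{\epsilon_\nfk(c,d)}$ (which flips orientation when $\epsilon_\nfk(c,d)=1$), I obtain, up to sign,
$$
f(\gamma e_0) = \pm\, g\begin{pmatrix} \pi_\infty^{r} & u \\ 0 & 1 \end{pmatrix}.
$$
Thus $f$ is identically zero as soon as $g\begin{pmatrix} \pi_\infty^{r} & u \\ 0 & 1 \end{pmatrix} = 0$ for all $u \in K_\infty$ and all relevant exponents $r$. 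By Definition~\ref{defn: l(n)}, as $\gamma$ ranges over the representatives with $\deg\nfk_\gamma \geq 2$ the quantity $r = \deg\nfk + 2\delta_\nfk(c,d)+\epsilon_\nfk(c,d)$ is bounded above by $\deg\nfk + \ell(\nfk)$; hence it suffices to control $g$ on edges with $r \leq \deg \nfk + \ell(\nfk)$.

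Next I would invoke the Fourier expansion \eqref{eqn: Four-exp} applied to $g$: for a fixed $r$, the function $u \mapsto g\begin{pmatrix}\pi_\infty^r & u \\ 0 & 1\end{pmatrix}$ equals $q^{-r+2}\big(c_0(g) + \sum_{\deg\mfk + 2 \leq r} c_\mfk(g)\Psi(\mfk u)\big)$. Since $g \in \Hcal_0(\nfk)$ we have $c_0(g) = 0$ by Remark~\ref{rem: cOcusp}, and the characters $\Psi(\mfk\,\cdot)$ for distinct $\mfk \in A_+$ are linearly independent; therefore $g$ vanishes on all edges with exponent up to $\deg\nfk + \ell(\nfk)$ precisely when $c_\mfk(g) = 0$ for every $\mfk \in A_+$ with $\deg\mfk \leq \deg\nfk + \ell(\nfk) - 2$. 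Finally I would translate the vanishing of the coefficients $c_\mfk(g) = c_\mfk(f|W_\nfk)$ back into the vanishing of the coefficients $c_\mfk(f)$ of the original form. The main obstacle, and the step I expect to require the most care, is exactly this last comparison: I must verify that the Atkin-Lehner involution carries the Fourier coefficients $c_\mfk(f)$ of degree $\leq \deg\nfk - 2 + \ell(\nfk)$ bijectively (up to nonzero scalars and a reindexing supported on the same degree range) to the coefficients $c_\mfk(g)$ of degree $\leq \deg\nfk + \ell(\nfk) - 2$, so that the hypothesis $c_\mfk(f) = 0$ for $\deg\mfk \leq \deg\nfk - 2 + \ell(\nfk)$ indeed forces all the needed $c_\mfk(g)$ to vanish; here one uses that $W_\nfk$ preserves the degree filtration of the Fourier expansion, which follows from its explicit action together with property~(1) of the Fourier coefficients.
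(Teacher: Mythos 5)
Your assembly of the ingredients has the Atkin--Lehner twist pointing in the wrong direction, and this creates a genuine gap at exactly the step you flag as delicate. You apply Lemma~\ref{lem: UD-Lem} to $f$ itself and use Lemma~\ref{lem: Key-Lem.2} to write $f(\gamma e_0)=\pm\,g\left(\begin{smallmatrix}\pi_\infty^{r}&u\\0&1\end{smallmatrix}\right)$ with $g:=f|W_\nfk$; to conclude you then need $c_\mfk(g)=0$ for all $\deg\mfk\le\deg\nfk+\ell(\nfk)-2$, whereas the hypothesis only controls $c_\mfk(f)$. Your proposed bridge --- that $W_\nfk$ ``preserves the degree filtration of the Fourier expansion'' --- does not follow from property (1) of the Fourier coefficients and is false in general: the coefficients of $f|W_\nfk$ are the Fourier coefficients of $f$ at the other cusp, and there is no termwise relation between the low-degree coefficients at the two cusps (already in the classical setting an oldform $g(Nz)$ has almost all Fourier coefficients zero while its $W_N$-image is essentially $g(z)$, which does not). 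Controlling $c_\mfk(f|W_\nfk)$ by the $c_\mfk(f)$ in a fixed degree range is essentially as hard as the proposition itself, so this step cannot be waved through.

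The repair is to run your own argument on $f':=f|W_\nfk$ instead of on $f$, which is what the paper does. Since $W_\nfk$ is an involution, $f'|W_\nfk=f$, so Lemma~\ref{lem: Key-Lem.2} gives $f'(\gamma e_0)=\pm\,f\left(\begin{smallmatrix}\pi_\infty^{r}&u\\0&1\end{smallmatrix}\right)$ with $r=\deg\nfk+2\delta_\nfk(c,d)+\epsilon_\nfk(c,d)\le\deg\nfk+\ell(\nfk)$; these values vanish directly from the Fourier expansion \eqref{eqn: Four-exp}, Remark~\ref{rem: cOcusp} and the hypothesis on the $c_\mfk(f)$. Then Lemma~\ref{lem: UD-Lem} forces $f'=0$, hence $f=f'|W_\nfk=0$. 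No comparison of Fourier coefficients across the involution is needed; everything else in your outline (the role of $\ell(\nfk)$, the sign from the orientation flip when $\epsilon_\nfk(c,d)=1$, the use of cuspidality to kill $c_0$) is correct.
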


\begin{proof}
Given $f \in \Hcal_0(\nfk)$ satisfying $c_\mfk(f) = 0$ for all $\mfk \in A_+$ with $\deg \mfk \leq \deg \nfk -2 + \ell(\nfk)$,
let $f' := f|W_\nfk$, which belongs to $\Hcal_0(\nfk)$.
From the Fourier expansion \eqref{eqn: Four-exp}, we know that 
$$\forall u \in K_\infty \text{ and } r \leq  \deg \nfk + \ell(\nfk),\quad (f'|W_\nfk)\begin{pmatrix} \pi_\infty^r & u \\ 0 & 1\end{pmatrix} = f \begin{pmatrix} \pi_\infty^r & u \\ 0 & 1\end{pmatrix} =  0.$$
Since $f'$ is uniquely determined by its values at $\gamma e_0$ for $\gamma \in \Gamma_0(\nfk) \backslash \Gamma$ by Lemma~\ref{lem: UD-Lem}, Lemma~\ref{lem: Key-Lem.2} implies that $f' = f|W_\nfk$ is identically zero, and so is $f$.
\end{proof}

Next we shall connect the integer $\ell(\nfk)$ with the number of prime factors of $\nfk$.
Given $m,n \in \ZZ_{\geq 0}$, for each pair $(c,d)$ with $c,d \in A$ and $\text{gcd}(c,d) = 1$, we put
$$\Scal(c,d;m) := \{xc+yd\mid \deg x,\deg y \leq m\}.$$
Let
$$t(c,d;m):=\max\{ \#(\Scal') \mid \Scal' \subset \Scal(c,d;m) \text{ with, for any distinct } \alpha, \beta \in \Scal', \text{gcd}(\alpha,\beta) = 1\}.$$
We define
$$t(m,n):= \min\{t(c,d;m)\mid c,d \in A \text{ with } \text{gcd}(c,d) = 1 \text{ and } m+1 < \max(\deg c,\deg d) < n\}$$
if $n \geq m+3$, and $t(m,n) := + \infty$ otherwise.
Finally, given $\nfk \in A_+$ let $t(\nfk)$ be the number of prime factors of $\nfk$.

\begin{defn}\label{defn: d(n)}
For $\nfk \in A_+$,
put
$$\tau(\nfk):= 
\min\{m \in \ZZ_{\geq 0} \mid t(\nfk)<t(m, \deg \nfk)\}.
$$
\end{defn}

Then:

\begin{cor}\label{cor: SBound.2}
For each $\nfk \in A_+$, we have
$$\ell(\nfk) \leq 2\tau(\nfk)+1.$$
Thus $f \in \Hcal_0(\nfk)$ is identically zero if $c_\mfk(f) = 0$ for $\mfk \in A_+$ with $\deg \mfk \leq \deg \nfk -1 +2\tau(\nfk)$.
\end{cor}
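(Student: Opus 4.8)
The plan is to bound $\ell(\nfk)$ by relating the minimization quantity $\delta_\nfk(c,d)$ directly to the arithmetic threshold $\tau(\nfk)$. Recall from Definition~\ref{defn: l(n)} that $\ell(\nfk) = \max\{2\delta_\nfk(c,d)+\epsilon_\nfk(c,d)\mid [c:d]\in\Ccal(\nfk)\}$, and that $\epsilon_\nfk(c,d)\in\{0,1\}$. Thus it suffices to show that for \emph{every} cusp $[c:d]\in\Ccal(\nfk)$ one has $\delta_\nfk(c,d)\leq\tau(\nfk)$; then $2\delta_\nfk(c,d)+\epsilon_\nfk(c,d)\leq 2\tau(\nfk)+1$, giving the inequality uniformly over all cusps, hence for the maximum.

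To establish $\delta_\nfk(c,d)\leq\tau(\nfk)$, I would argue by contradiction using the combinatorial machinery set up just before the corollary. Set $m:=\tau(\nfk)$. By Definition~\ref{defn: d(n)}, $m$ is the least integer with $t(\nfk)<t(m,\deg\nfk)$. Suppose, for contradiction, that $\delta_\nfk(c,d)>m$ for some representative pair $(c,d)$ with $\gcd(c,d)=1$; we may normalize so that $m+1<\max(\deg c,\deg d)<\deg\nfk$ (the normalization of cusp representatives in Theorem~\ref{thm: Quo-G-dec} gives $\deg$ bounds of this shape, and small-degree cases must be handled separately). The assumption $\delta_\nfk(c,d)>m$ means that \emph{no} element $xc+yd$ with $\deg x,\deg y\leq m$ is coprime to $\nfk$; equivalently, every element of $\Scal(c,d;m)$ shares a prime factor with $\nfk$. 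The key observation is then a pigeonhole/counting argument: take a maximal pairwise-coprime subset $\Scal'\subset\Scal(c,d;m)$ realizing $t(c,d;m)$. Each of its $\#(\Scal')=t(c,d;m)$ elements is divisible by at least one of the $t(\nfk)$ primes dividing $\nfk$, and pairwise coprimality forces distinct elements to use disjoint sets of these primes. This yields $t(c,d;m)\leq t(\nfk)$, and since $t(m,\deg\nfk)=\min_{c,d}t(c,d;m)\leq t(c,d;m)$ over the admissible range, we get $t(m,\deg\nfk)\leq t(\nfk)$, contradicting $t(\nfk)<t(m,\deg\nfk)$.

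The final sentence of the corollary is then immediate: combining $\ell(\nfk)\leq 2\tau(\nfk)+1$ with Proposition~\ref{prop: Sbound.1}, which gives vanishing whenever $c_\mfk(f)=0$ for all $\mfk$ with $\deg\mfk\leq\deg\nfk-2+\ell(\nfk)$, shows that the weaker hypothesis $\deg\mfk\leq\deg\nfk-2+(2\tau(\nfk)+1)=\deg\nfk-1+2\tau(\nfk)$ already forces $f$ to vanish, since enlarging the range of vanishing Fourier coefficients only strengthens the conclusion.

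The step I expect to be the main obstacle is the clean verification that every admissible cusp representative $(c,d)$ falls into the degree range $m+1<\max(\deg c,\deg d)<\deg\nfk$ on which $t(m,\deg\nfk)$ is defined as a genuine minimum (rather than $+\infty$). When $\max(\deg c,\deg d)$ is small---in particular when $\deg\nfk<m+3$, forcing $t(m,\deg\nfk)=+\infty$---the counting argument degenerates and one must instead check $\delta_\nfk(c,d)\leq\tau(\nfk)$ by hand, presumably falling back on the cases $0\le t(\nfk)\le 2q$ tabulated after Theorem~\ref{thm: 0.3} where $\tau(\nfk)$ is known explicitly. Reconciling the boundary behavior of the $t(m,n)=+\infty$ convention with the definition of $\tau(\nfk)$ as a minimum over such thresholds is the delicate bookkeeping that must be carried out carefully.
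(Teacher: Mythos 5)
Your main counting step is correct and is essentially the paper's argument in contrapositive form: for a pair with $m+1<\max(\deg c,\deg d)<\deg\nfk$ (where $m:=\tau(\nfk)$), a pairwise-coprime subset $\Scal'\subset\Scal(c,d;m)$ of size at least $t(m,\deg\nfk)>t(\nfk)$ must, by pigeonhole, contain an element coprime to $\nfk$, which forces $\delta_\nfk(c,d)\leq m$; and the reduction $\ell(\nfk)\leq 2\tau(\nfk)+1$ from $\delta_\nfk(c,d)\leq\tau(\nfk)$ together with $\epsilon_\nfk(c,d)\in\{0,1\}$, followed by the appeal to Proposition~\ref{prop: Sbound.1}, is exactly right.

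However, the case you flag as ``the main obstacle'' --- cusp representatives $(c,d)$ with $\max(\deg c,\deg d)\leq m+1$, including the degenerate situation where $t(m,\deg\nfk)=+\infty$ because no pair lies in the admissible range --- is a genuine gap as you have left it. Your proposed fallback, namely checking these cases ``by hand'' using the tabulated values of $\tau(\nfk)$ for $t(\nfk)\leq 2q$, does not work: knowing the numerical value of $\tau(\nfk)$ in those ranges does not by itself produce $x,y$ of degree at most $\tau(\nfk)$ with $\gcd(cx+dy,\nfk)=1$, and for $t(\nfk)>2q$ there is no table to fall back on at all. The paper's resolution is short and you should supply it: if $\gcd(c,d)=1$ and $\max(\deg c,\deg d)\leq m+1$, the extended Euclidean algorithm in $\FF_q[\theta]$ yields $x,y\in A$ with $cx+dy=1$ and $\deg x<\deg d$, $\deg y<\deg c$, hence $\deg x,\deg y\leq m$; since $1$ is coprime to $\nfk$, this gives $\delta_\nfk(c,d)\leq m$ directly, with no reference to $t(m,\deg\nfk)$. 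Note this case is unavoidable (e.g.\ the cusp $[0:1]$ always has $\max(\deg c,\deg d)=0$), so it cannot be normalized away. With that one observation inserted, your proof coincides with the paper's.
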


\begin{proof}
Given $\gamma = \begin{pmatrix} a&b \\ c&d\end{pmatrix} \in \Gamma$, we may assume that $c \mid \nfk$ and $\deg c, \deg d < \deg \nfk$.
By Proposition~\ref{prop: Sbound.1}, it suffices to show that $\delta_\nfk(c,d) \leq \tau(\nfk)$. Let $m:=\tau(\nfk)$.
If $\max(\deg c,\deg d) \leq m+1$, then there exists $x,y \in A$ with $\deg x, \deg y \leq m$ such that $cx+dy = 1$. Thus $\delta_\nfk(c,d)\leq m$.
Suppose $m+1<\max(\deg c,\deg d)< \deg \nfk$.
Take $\Scal' \subset \Scal(c,d;m)$ with $\#(\Scal') \geq t(m,\deg \nfk)$ and  $\text{gcd}(\alpha,\beta) =1$ for distinct $\alpha,\beta \in \Scal'$.
Then 
$\text{gcd}(\alpha,\nfk)$ and $\text{gcd}(\beta,\nfk)$ must be relatively prime for distinct $\alpha,\beta \in \Scal'$.
Since $t(\nfk)< t(m,\deg \nfk) \leq \#(\Scal')$, the pigeonhole principle ensures that there exists $\alpha_0 \in \Scal'$ such that $\text{gcd}(\alpha_0,\nfk) = 1$.
Writing $\alpha_0$ as $x_0c+y_0d$ where $x_0,y_0 \in A$ with $\deg x_0,\deg y_0 \leq m$,
we then have
\begin{eqnarray}
\delta_\nfk(c,d) &=& 
\min \{\max(\deg x, \deg y)\mid \text{gcd}(cx+dy,\nfk) = 1\} \nonumber \\
&\leq & \max(\deg x_0,\deg y_0) \nonumber \\
&\leq &  m . \nonumber
\end{eqnarray}
\end{proof}

One may observe that $t(n-2,n)=+\infty$ for any $n\geq 2$ hence $\tau(\nfk) \leq \deg \nfk -2$ when $\deg \nfk \geq 2$. Moreover, it can be checked that $t(0,n) = q+1$ for $n \geq 3$. Thus $\tau(\nfk)= 0$ if $t(\nfk)<q+1$. We also have:

\begin{lem}\label{lem: tBound}
We have $t(1,n) \geq 2q+1$ for $n \geq 4$ hence
\[
 \tau(\nfk)=1 \quad\text{when}\quad q<t(\nfk)\leq 2q.
\]
Consequently when $q < t(\nfk) \leq  2q$, $f \in \Hcal_0(\nfk)$ is identically zero if $c_\mfk(f) = 0$ for all $\mfk \in A_+$ with $\deg \mfk \leq \deg \nfk + 1$.
\end{lem}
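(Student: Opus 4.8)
The plan is to first reduce the two assertions about $\tau(\nfk)$ and the resulting vanishing criterion to the single inequality $t(1,n)\ge 2q+1$, and then to prove that inequality.

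\emph{Reduction.} Suppose $q<t(\nfk)\le 2q$. Since there are exactly $q$ monic primes of degree $1$ in $A$ (the $\theta-a$, $a\in\FF_q$), at most $q$ of the $t(\nfk)$ distinct prime factors of $\nfk$ are linear, so $\deg\nfk\ge 2t(\nfk)-q\ge q+2\ge 4$. Now read off $\tau(\nfk)=\min\{m:\ t(\nfk)<t(m,\deg\nfk)\}$. For $m=0$ we have $t(0,\deg\nfk)=q+1$ (as $\deg\nfk\ge 3$), and $t(\nfk)\ge q+1$ shows the defining inequality fails; for $m=1$, once $t(1,\deg\nfk)\ge 2q+1$ is available, $t(\nfk)\le 2q<2q+1$ gives $t(\nfk)<t(1,\deg\nfk)$. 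Hence $\tau(\nfk)=1$, and Corollary~\ref{cor: SBound.2} specialized to $\tau(\nfk)=1$ yields that $f\in\Hcal_0(\nfk)$ vanishes when $c_\mfk(f)=0$ for all $\mfk$ with $\deg\mfk\le\deg\nfk-1+2=\deg\nfk+1$.

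\emph{The bound $t(1,n)\ge 2q+1$.} Fix coprime $c,d$ with $2<\max(\deg c,\deg d)<n$; I must exhibit $2q+1$ pairwise coprime elements of $\Scal(c,d;1)=\{xc+yd:\deg x,\deg y\le 1\}$. The basic device is the determinant estimate: from $y_2(x_1c+y_1d)-y_1(x_2c+y_2d)=(x_1y_2-x_2y_1)c$ and the analogous identity with $d$, together with $\gcd(c,d)=1$, one obtains $\gcd(x_1c+y_1d,\,x_2c+y_2d)\mid x_1y_2-x_2y_1$. First take the $q+1$ constant directions $(1,a)$ for $a\in\FF_q$ and $(0,1)$, that is the elements $c+ad$ and $d$: their pairwise determinants lie in $\FF_q^\times$, so these $q+1$ elements are pairwise coprime (this is the copy of $\PP^1(\FF_q)$ already giving $t(0,\cdot)=q+1$).

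\emph{Adjoining $q$ further elements.} The structural point is that the determinant of a constant pair against a degree-$\le 1$ pair has degree $\le 1$, while that of two degree-$\le 1$ pairs has degree $\le 2$. Hence, by the estimate, any element $xc+yd$ with $\deg x,\deg y\le 1$ that has \emph{no linear factor} is automatically coprime to all $q+1$ constant-direction elements, and two such linear-factor-free elements can share only a prime of degree exactly $2$. The plan is therefore to adjoin $q$ pairs $(x_i,y_i)$ whose values $x_ic+y_id$ are linear-factor-free and pairwise coprime, giving $2q+1$ pairwise coprime elements in total; coprimality with the base and all degree-$1$ prime interactions are then free, and only the degree-$2$ collisions among the extras remain to be ruled out.

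\emph{Main obstacle.} The heart of the matter is exactly this last step: producing $q$ linear-factor-free, pairwise coprime values inside $\Scal(c,d;1)$, uniformly over all admissible $(c,d)$. A rigid family with a single leading term is inadequate (for $c$ without roots in $\FF_q$ it leaves too few linear-factor-free values), so the extra pairs must be selected adaptively from the factorization data of $c$ and $d$, and genuine degree-$2$ determinants do arise among them. I expect the decisive input to be a reduction-modulo-$P$ count: an element $xc+yd$ is divisible by $P$ precisely when $(x:y)$ equals the bad direction $(-d:c)\bmod P$, and two elements collide at $P$ only when $P$ divides their determinant; bounding the number of possible degree-$2$ collisions and then applying pigeonhole (in the spirit of Corollary~\ref{cor: SBound.2}) should allow the selection of $q$ mutually coprime extras. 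Controlling this count is the one place where the hypotheses $\max(\deg c,\deg d)\ge 3$ and the size of $\FF_q$ must be used with care.
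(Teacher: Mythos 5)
Your reduction of the two claims about $\tau(\nfk)$ to the single inequality $t(1,n)\ge 2q+1$ is correct (and slightly more careful than the paper about why $\deg\nfk\ge 4$), and your determinant device $\gcd(x_1c+y_1d,\,x_2c+y_2d)\mid x_1y_2-x_2y_1$ together with the $q+1$ constant directions is exactly the right framework. But the proof stops at the decisive point: you never actually construct the $q$ additional pairwise coprime elements of $\Scal(c,d;1)$. The passage beginning \lq\lq I expect the decisive input to be a reduction-modulo-$P$ count \ldots should allow the selection of $q$ mutually coprime extras\rq\rq\ is a statement of intent, not an argument, and it is precisely the step where all the work lies. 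As written, the proposal also sets itself a harder problem than necessary: by allowing the extra pairs $(x_i,y_i)$ to vary freely you create determinants of degree $2$ and are forced to control \lq\lq degree-$2$ collisions,\rq\rq\ for which you give no actual bound.

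The paper's proof closes exactly this gap by a more rigid choice that you dismiss as inadequate. After normalizing (via a $\GL_2(\FF_q)$ change of the pair and the $\PP^1(\FF_q)$ pigeonhole) so that $d$ has no linear factor and $\theta\mid c$, it takes the auxiliary family $z(\varepsilon_0,\varepsilon_1)=(\varepsilon_0\theta+\varepsilon_1)c+y_0d$ with a \emph{fixed} second coordinate $y_0\in\{1,\theta-1\}$ (chosen according to whether $\theta-1$ divides $c$, so that $y_0$ does not vanish at any $\FF_q$-root of $c$). Fixing $y_0$ forces the determinant of any two members of the family to be $(\varepsilon_0'-\varepsilon_0)\theta+(\varepsilon_1'-\varepsilon_1)$, of degree $\le 1$; hence once a member has no linear factor it is automatically coprime both to the whole family and to all $q+1$ constant directions, and no degree-$2$ analysis is ever needed. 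The remaining count is elementary: for each $\varepsilon\in\FF_q$ the condition $\theta-\varepsilon\mid z(\varepsilon_0,\varepsilon_1)$ is a single affine-linear condition on $(\varepsilon_0,\varepsilon_1)$ of \lq\lq slope\rq\rq\ $\varepsilon$, and these conditions only arise for the at most $q-1$ (resp.\ $q-2$) values of $\varepsilon$ not killed by the roots of $c$; removing them leaves at least $q$ (resp.\ $2(q-1)\ge q$) good pairs. So your claim that \lq\lq a rigid family with a single leading term is inadequate\rq\rq\ is the opposite of what happens: the rigidity is what makes the proof work, and without some such device your outline does not yield $t(1,n)\ge 2q+1$.
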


\begin{proof}
It suffices to find $\Scal' \subset \Scal(c,d;1)$ with $\#(\Scal') \geq 2q+1$ for every pair $(c,d) \in A^2$ with $\text{gcd}(c,d) = 1$ and $\max(\deg c,\deg d) \geq 3$.

Given $c,d \in A$ with $\deg c \geq 3$ and $\text{gcd}(c,d) = 1$, for $\varepsilon \in \PP^1(\FF_q)$ put
$$c_\varepsilon =
\begin{cases} c+\varepsilon d & \text{ if $\varepsilon \in \FF_q$,} \\
d & \text{ if $\varepsilon = \infty$.}
\end{cases}
$$
There exists $\varepsilon' \in \PP^1(\FF_q)$ such that $\theta - \varepsilon \nmid c_{\varepsilon'}$ for every $\varepsilon \in \FF_q$.
Without loss of generality, assume $\varepsilon' = \infty$ and $\theta \mid c$ (i.e.\ $c = c_0$ and $d = c_\infty$).

Suppose $\theta-1 \mid c$ (resp.\ $\theta-1 \nmid c$).
Then for $(\varepsilon_0,\varepsilon_1) \in \FF_q^\times \times \FF_q$ (resp.\ $ \FF_q\times \FF_q$),
let
$$z(\varepsilon_0,\varepsilon_1):= 
(\varepsilon_0\theta+\varepsilon_1)c + \begin{cases}
d & \text{ if $\theta-1 \mid c$,}\\
(\theta-1)d & \text{ if $\theta-1 \nmid c$.}
\end{cases}
$$
one has
$$\gcd\big(z(\varepsilon_0,\varepsilon_1),c\big) = 1 = \gcd\big(z(\varepsilon_0,\varepsilon_1),d\big),$$
and for $\beta \in \FF_q^\times$:
\begin{eqnarray}
&& \gcd\big(z(\varepsilon_0,\varepsilon_1),c_\beta \big) \nonumber = 
\begin{cases}
\gcd\Big(\varepsilon_0\theta + \varepsilon_1-\beta^{-1},c_\beta\Big)
& \text{ if $\theta-1 \mid c$,}\\
\gcd\Big((\varepsilon_0\theta + \varepsilon_1-\beta^{-1}(\theta-1)),c_\beta\Big)
& \text{ if $\theta-1 \nmid c$.}
\end{cases}
\nonumber
\end{eqnarray}
On the other hand, for each $\varepsilon \in \FF_q^\times$ with $\theta -\varepsilon \nmid c$, there exists a unique $\beta_\varepsilon \in \FF_q^\times$ such that $\theta - \varepsilon \mid c_{\beta_\varepsilon}$.
Thus for $\varepsilon \in \FF_q^\times$ with $\theta-\varepsilon \nmid c$, one has
$$\text{gcd}\Big((\varepsilon_0\theta + \varepsilon_1)c+d,c_\beta\Big) = \theta - \varepsilon \quad \text{ if and only if }
\beta = \beta_\varepsilon \ \text{ and }
$$
$$
\begin{cases}
\varepsilon_0\varepsilon + \varepsilon_1 - \beta_\varepsilon^{-1} = 0 & \text{ if $\theta-1 \mid c$,}\\
\varepsilon_0\varepsilon + \varepsilon_1 - \beta_\varepsilon^{-1}(\varepsilon-1) = 0 & \text{ if $\theta-1\nmid c$.}
\end{cases}
$$
In this case, $\varepsilon_1$ is uniquely determined by the choices of $\varepsilon_0$ and $\varepsilon$.
Suppose $\theta -1 \mid c$ (resp.\ $\theta-1 \nmid c$). There are at most $q-2$ (resp.\ $q-1$) choices of $\varepsilon \in \FF_q^\times$ so that $\theta -\varepsilon \nmid c$. 
Thus we obtain that there are at least $(q-1)\cdot 2$ (resp.\ $q$) choices of the pair $(\varepsilon_0,\varepsilon_1) \in \FF_q^\times \times \FF_q$ (resp.\ $\FF_q\times \FF_q$) so that
$$\forall \beta \in \FF_q^\times,\quad \text{gcd}\big(z(\varepsilon(\varepsilon_0,\varepsilon_1),c_\beta\big) = 1.$$

Let $$\Scal_1':=\Big\{z(\varepsilon_0,\varepsilon_1)\ \Big|\ \forall \beta \in \PP^1(\FF_q),\ \text{gcd}\big(z(\varepsilon_0,\varepsilon_1),c_\beta\big) = 1 \Big\}.$$
Then given distinct $z(\varepsilon_0,\varepsilon_1),z(\varepsilon_0',\varepsilon_1') \in \Scal_1'$, one has
\begin{align*}
\text{gcd}\Big(z(\varepsilon_0,\varepsilon_1), z(\varepsilon_0', \varepsilon_1')\Big) \nonumber &= \text{gcd} \Big(z(\varepsilon_0,\varepsilon_1), (\varepsilon_0'-\varepsilon_0)\theta+(\varepsilon_1'-\varepsilon_1)\Big)
\\
&= 1.
\end{align*}
Note that
$$\#\Scal_1' \geq \begin{cases}
2(q-1) & \text{ if $\theta-1\mid c$,}\\
q & \text{ if $\theta-1 \nmid c$.}
\end{cases}
$$
Take $$\Scal_0':=\big\{c_\varepsilon\ \big|\ \varepsilon \in \PP^1(\FF_q)\big\} \quad \text{ and } \quad \Scal' := \Scal_0' \cup \Scal_1'.$$
Then $\Scal' \subset \Scal(c,d;1)$ and 
$\#(\Scal') \geq 2q+1$.
Therefore $t(1,n) \geq 2q+1$ for $n \geq 4$.
\end{proof}

\subsubsection{The case of prime power level}\label{subsubsec: prime-power}

Suppose $\nfk = \pfk^r$ where $r$ is a positive integer and $\pfk \in A_+$ is a prime. In this case we are able to give a better bound than Proposition~\ref{prop: Sbound.1}.
Let $\gamma = \begin{pmatrix} a&b \\ c&d\end{pmatrix} \in \Gamma$. Suppose that $\pfk \nmid c$. Take $\alpha,\beta \in A$ such that $\alpha a \nfk + \beta c = 1$. Then $\gamma_0 = \begin{pmatrix} -c &a \\ \alpha \nfk & \beta \end{pmatrix} \in \Gamma_0(\nfk)$ and
$$
\gamma_0 \,\gamma = \begin{pmatrix} 0 & -\det \gamma \\ 1 & \alpha b \nfk + \beta d \end{pmatrix} = \begin{pmatrix} 0&-1 \\ \nfk&0 \end{pmatrix} \begin{pmatrix} \nfk^{-1} & \nfk^{-1}(\alpha b \nfk + \beta d) \\ 0 & - \det \gamma\end{pmatrix} .$$
Thus for $f \in \Hcal_0(\nfk)$, one has
$$f(\gamma e_0) = (f|W_\nfk)\begin{pmatrix} \pi_\infty^{\deg \nfk} & -\det \gamma ^{-1} \nfk^{-1}(\alpha b \nfk + \beta d) \\ 0 & 1\end{pmatrix}.$$
If $\pfk \mid c$, then $\text{gcd}(\varepsilon c+d,\nfk) = 1$ for every $\varepsilon \in \FF_q$.
Similarly, we take $\alpha_{\varepsilon}, \beta_{\varepsilon} \in A$ such that $\alpha_{\varepsilon}(\varepsilon a + b) \nfk + \beta_\varepsilon (\varepsilon c + d) = 1$.
Then $\gamma_{0,\varepsilon} = \begin{pmatrix} -(\varepsilon c + d) & \varepsilon a + b \\ 
\alpha_\varepsilon \nfk & \beta_\varepsilon \end{pmatrix} \in \Gamma_0(\nfk)$ and we have
$$
\gamma_{0,\varepsilon}\, \gamma \begin{pmatrix} \varepsilon & 1 \\ 1&0\end{pmatrix} = \begin{pmatrix} 0 & \det \gamma \\ 1 & \alpha_{\varepsilon} a \nfk + \beta_{\varepsilon} c \end{pmatrix} = \begin{pmatrix} 0&-1 \\ \nfk&0 \end{pmatrix} \begin{pmatrix} \nfk^{-1} & \nfk^{-1}(\alpha_\varepsilon b \nfk + \beta_\varepsilon c) \\ 0 & \det \gamma\end{pmatrix} .$$

For $f \in \Hcal_0(\nfk)$, the harmonicity property implies
$$f(\gamma e_0) = -\sum_{\varepsilon \in \FF_q} 
f\left(\gamma_{0,\varepsilon} \gamma \begin{pmatrix} \varepsilon & 1 \\ 1 & 0 \end{pmatrix}\right)
= -\sum_{\varepsilon \in \FF_q}
(f|W_\nfk)\begin{pmatrix} 
\nfk^{-1} & \nfk^{-1}(\alpha_\varepsilon b \nfk + \beta_\varepsilon c) \\ 0 & \det \gamma\end{pmatrix}.$$
Following a similar argument as in Proposition~\ref{prop: Sbound.1}, we get:

\begin{cor}\label{cor: SBound.3}
Suppose $\nfk = \pfk^r$ where $r$ is a positive integer and $\pfk \in A_+$ is a prime.
Then $f \in \Hcal_0(\nfk)$ is identically zero if $c_\mfk(f) = 0$ for all $\mfk \in A_+$ with $\deg \mfk \leq \deg \nfk - 2$.
\end{cor}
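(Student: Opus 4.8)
The plan is to imitate the proof of Proposition~\ref{prop: Sbound.1}, exploiting that the two identities derived just above express $f(\gamma e_0)$ through the Atkin--Lehner twist $f|W_\nfk$ evaluated at horoball matrices whose upper-left entry has valuation exactly $\deg\nfk$, rather than $\deg\nfk + 2\delta_\nfk(c,d) + \epsilon_\nfk(c,d)$ as in Lemma~\ref{lem: Key-Lem.2}. First I would record the consequence of the hypothesis: given $f \in \Hcal_0(\nfk)$ with $c_\mfk(f) = 0$ for all $\mfk$ of degree $\leq \deg\nfk - 2$, Remark~\ref{rem: cOcusp} gives $c_0(f) = 0$, so the Fourier expansion \eqref{eqn: Four-exp} yields
$$f\begin{pmatrix}\pi_\infty^r & u \\ 0 & 1\end{pmatrix} = 0 \qquad \text{for all } u \in K_\infty \text{ and all } r \leq \deg\nfk.$$

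Next I would set $f' := f|W_\nfk \in \Hcal_0(\nfk)$ and, since $W_\nfk$ is an involution (its square is the scalar $-\nfk$, acting trivially on $E(\sT)$), use $f'|W_\nfk = f$. By Lemma~\ref{lem: UD-Lem} it suffices to show $f'(\gamma e_0) = 0$ for every $\gamma = \begin{pmatrix}a&b\\c&d\end{pmatrix}\in\Gamma_0(\nfk)\backslash\Gamma$. Applying the first identity above to $f'$ in the case $\pfk\nmid c$ gives $f'(\gamma e_0) = (f'|W_\nfk)\begin{pmatrix}\pi_\infty^{\deg\nfk}&u\\0&1\end{pmatrix} = f\begin{pmatrix}\pi_\infty^{\deg\nfk}&u\\0&1\end{pmatrix}$, which vanishes by the displayed Fourier vanishing (here $r = \deg\nfk$). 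In the case $\pfk\mid c$, applying the harmonicity identity to $f'$ gives
$$f'(\gamma e_0) = -\sum_{\varepsilon\in\FF_q} f\begin{pmatrix}\nfk^{-1} & u_\varepsilon \\ 0 & \det\gamma\end{pmatrix};$$
here I would normalize each edge by scaling with the central element $\det\gamma^{-1}\in\FF_q^\times$ and absorbing the resulting diagonal unit $\nfk^{-1}\det\gamma^{-1}\pi_\infty^{-\deg\nfk}\in O_\infty^\times$ into $\Ical_\infty$, so that $\begin{pmatrix}\nfk^{-1}&u_\varepsilon\\0&\det\gamma\end{pmatrix}$ represents the same edge of $\sT$ as $\begin{pmatrix}\pi_\infty^{\deg\nfk}&u'_\varepsilon\\0&1\end{pmatrix}$ for some $u'_\varepsilon\in K_\infty$; the displayed vanishing again forces each summand to be $0$. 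Hence $f'(\gamma e_0) = 0$ for all $\gamma$, so $f' = 0$ and therefore $f = f'|W_\nfk = 0$.

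The heart of the matter, and the only place where primality of $\pfk$ is used, is the case $\pfk\mid c$. For a prime power one always has $\delta_\nfk(c,d) = 0$, but a cusp with $\pfk\mid c$ forces $\epsilon_\nfk(c,d) = 1$, so the general Proposition~\ref{prop: Sbound.1} would only give the weaker bound $\deg\nfk - 1$. The gain of one unit comes precisely from replacing the single edge of Lemma~\ref{lem: Key-Lem.2} (which would sit at level $\deg\nfk + 1$) by the harmonic relation summing the $q$ neighbouring edges $\gamma\begin{pmatrix}\varepsilon&1\\1&0\end{pmatrix}e_0$, each of which descends to level $\deg\nfk$. The main bookkeeping obstacle is checking that, after the $\Gamma_0(\nfk)$-reduction by $\gamma_{0,\varepsilon}$ and the Iwasawa factorisation recorded above, the relevant diagonal entry genuinely has valuation $\deg\nfk$ and the off-diagonal unit can be discarded modulo $\Ical_\infty$; granting that, the argument runs parallel to Proposition~\ref{prop: Sbound.1}.
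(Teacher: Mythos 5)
Your proposal is correct and takes essentially the same route as the paper: the two identities you use (a single $\Gamma_0(\nfk)$-translation moving $\gamma e_0$ to $w_\nfk$ times a horoball matrix at level $\deg\nfk$ when $\gcd(c,\nfk)=1$, and the harmonicity relation at $\gamma v_0$ distributing the remaining case over the $q$ neighbours $\varepsilon c+d$, all coprime to $\pfk^r$) are exactly the ones derived in Section~\ref{subsubsec: prime-power}, and the paper's ``similar argument as in Proposition~\ref{prop: Sbound.1}'' is precisely the twist $f\mapsto f|W_\nfk$ combined with the Fourier vanishing up to level $\deg\nfk$ that you spell out. Your closing remark correctly identifies why this beats the general bound $\deg\nfk-2+\ell(\nfk)=\deg\nfk-1$: the harmonicity step trades the single edge at level $\deg\nfk+\epsilon_\nfk(c,d)=\deg\nfk+1$ for $q$ edges at level $\deg\nfk$.
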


%

\subsubsection{Computational data}
It seems difficult to give a precise formula for $t(m,n)$ when $m>0$ in general. However we are able to compute the actual value using SageMath in the following cases:

\begin{eqnarray}\label{V2}
\begin{tabular}{|c|ccccccc|}
\multicolumn{8}{c}{Value of $t(m,n)$ \ ($q=2$)} \\
\hline 
\diaghead{\theadfont aaaaaaaaaa}%
{$m$}{$n$}  & 4 & 5 & 6 & 7 & 8 & 9 & 10\\
\hline
       1 & 5 & 5 & 5 & 5 & 5 & 5 & 5 \\
       2 & $\infty$ & 11 & 11 & 10 & 9 & 9 & 8 \\
       3 & $\infty$ & $\infty$ & 33 & 30 & 27 & 23 & 23 \\
\hline
\end{tabular}
\end{eqnarray}

\begin{eqnarray} \label{Value of t(m,n) q=3}
\begin{tabular}{|c|ccccc|}
\multicolumn{6}{c}{Value of $t(m,n)$ \ ($q=3$)} \\
\hline 
\diaghead{\theadfont aaaaaaaaaa}%
{$m$}{$n$} & 4 & 5 & 6 & 7 & 8\\
\hline
       1 & 12 & 10 & 10 & 10 & 10\\
       2 & $\infty$ & 64 & 55 & 48 & 43\\
\hline
\end{tabular}
\end{eqnarray}
From these tables, we predict the following lower bound for $t(m,n)$:
$$\text{ for every $n \geq m+3$,} \quad t(m,n) \ ?\!\!\geq (m+1)q+1.$$
If so, then we would get for any $\nfk \in A_+$:
\begin{equation}\label{eq-predict}
\deg \nfk -1 + 2 \tau(\nfk) \ ?\!\! \leq \deg \nfk  -1 + 2 \left\lfloor\frac{t(\nfk)-1}{q}\right\rfloor=: b'(\nfk).
\end{equation}
which is much smaller than the bound $2 \deg \nfk -4$ in Proposition~\ref{prop: SBound-0} when $\deg \nfk$ is large.
It is observed that $t(\nfk) \leq 2q$ when $\deg \nfk \leq 10$ (except for $q = 2$ and $\deg \nfk = 10$), therefore by Lemma~\ref{lem: tBound} and the fact $t(2,10) = 8$, the inequality~\eqref{eq-predict} indeed holds at least for $\nfk \in A_+$ with $\deg \nfk \leq 10$.

\bigskip
The quantity $b'(\nfk)$ is much easier to compute than $\tau(\nfk)$. We now numerically compare it to the optimal Sturm bound for $\Gamma_0(\nfk)$-invariant cuspidal harmonic cochains, which is :
$$b^{\text{true}}(\nfk):=\min\Big\{ b \in \ZZ_{\geq 0} \ \Big|\ \text{$f \in \Hcal_0(\nfk)$ is identically zero if $c_\mfk(f) = 0$ for any $\deg \mfk \leq b$}\Big\}.
$$
First let us explain how we compute $b^{\text{true}}(\nfk)$ through genera of finite subgraphs of $\Gcal(\nfk)^o$. Put $\Hcal_0(\nfk)_{(\ell)}: = \{ f \in \Hcal_0(\nfk) \mid \forall \mfk \in A_+, \deg \mfk \leq \ell, c_{\mfk}(f)=0 \}$. Given $\ell \in \ZZ_+$ and $u\in K_\infty$, let 
\[
e(\ell,u):= \begin{pmatrix} \pi^{\ell+2} & u \\ 0 & 1 \end{pmatrix} e_0 \in E(\sT).
\]
The Fourier expansion \eqref{eqn: Four-exp} shows that, given $\nfk \in A_+$, $f \in \Hcal_0(\nfk)$ and $\ell \in \ZZ_+$, one has :
\[
f \in \Hcal_0(\nfk)_{(\ell)} \text{  if and only if }\quad \forall\, 0 \leq \ell' \leq \ell, \forall\, u \in \pi_\infty O_\infty / \pi_\infty^{\ell'+2} O_\infty, \quad f(e(\ell',u))=0.
\]
Let $\Gcal(\nfk)^o_{(\ell)}$ be the subgraph of $\Gcal(\nfk)^o$ obtained by removing the edges of the form $[e(\ell',u)]$ and $[\bar{e}(\ell',u)]$ for each $0 \leq \ell'\leq \ell$ and $u \in \pi_\infty O_\infty / \pi_\infty^{\ell'+2} O_\infty$. From these observations we get:
\begin{lem}
\begin{enumerate}
    \item Let $f \in \Hcal_0(\nfk)$. Then $f \in \Hcal_0(\nfk)_{(\ell)}$ if and only $f$ is supported on the edges of  $\Gcal(\nfk)^o_{(\ell)}$.
    \item $\dim_\CC \Hcal_0(\nfk)_{(\ell)}$ is equal to the genus   $g(\Gcal(\nfk)^o_{(\ell)})$ of $\Gcal(\nfk)^o_{(\ell)}$.
    \item $b^{\mathrm{true}}(\nfk) = \min \{ \ell \in \ZZ_+ \mid g(\Gcal(\nfk)^o_{(\ell)})=0 \}$.
\end{enumerate}
\end{lem}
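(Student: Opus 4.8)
The plan is to prove the three parts in sequence, with part~(1) feeding into part~(2) and part~(2) into part~(3); the only substantial content is the homological identification carried out in part~(2).

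For part~(1), I would start from the Fourier-theoretic criterion recorded just before the lemma (a consequence of the expansion \eqref{eqn: Four-exp}): a cochain $f\in\Hcal_0(\nfk)$ lies in $\Hcal_0(\nfk)_{(\ell)}$ if and only if $f(e(\ell',u))=0$ for all $0\le \ell'\le \ell$ and all $u\in\pi_\infty O_\infty/\pi_\infty^{\ell'+2}O_\infty$. By the harmonicity relation $f(\bar e)=-f(e)$, this is equivalent to $f$ vanishing on every oriented edge $[e(\ell',u)]$ and $[\bar e(\ell',u)]$ in that range. Since every $f\in\Hcal_0(\nfk)$ is already supported on $E(\Gcal(\nfk)^o)$ by Remark~\ref{rem: Har}\eqref{rem: HarSuppGO}, requiring in addition that $f$ kill exactly these edges is the same as requiring $f$ to be supported on $E(\Gcal(\nfk)^o_{(\ell)})$; the removed edges that happen to lie in an end of $\Gcal(\nfk)$ impose no new constraint, as $f$ vanishes on the ends anyway. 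This yields the stated equivalence.

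For part~(2), part~(1) identifies $\Hcal_0(\nfk)_{(\ell)}$ with the space of $\Gamma_0(\nfk)$-invariant harmonic cochains supported on the finite graph $\Gcal(\nfk)^o_{(\ell)}$. I would then match this space with the space of $\CC$-valued $1$-cycles on that graph, i.e. functions $\phi$ on its oriented edges with $\phi(\bar e)=-\phi(e)$ and $\sum_{o(e)=v}\phi(e)=0$ at each vertex $v$. The key observation is that for a cochain supported on the subgraph the tree harmonicity collapses to this cycle condition: at a vertex of the subgraph, the $q+1$ tree-edges that map outside the subgraph carry the value $0$, so the full vertex sum reduces to the sum over surviving edges, whereas at a vertex lying outside the subgraph the relation is vacuous. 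Conversely every $1$-cycle extends by zero to a $\Gamma_0(\nfk)$-invariant harmonic cochain on $\sT$. The resulting linear isomorphism $\Hcal_0(\nfk)_{(\ell)}\cong H_1(\Gcal(\nfk)^o_{(\ell)},\CC)$ has dimension equal to the cycle rank, that is the genus $g(\Gcal(\nfk)^o_{(\ell)})$. This is exactly the argument already giving $\dim_\CC\Hcal_0(\nfk)=g(\Gcal(\nfk))$ in Remark~\ref{rem: Har}, now applied to the subgraph. For part~(3), I would note that the subspaces are nested, $\Hcal_0(\nfk)_{(\ell+1)}\subseteq\Hcal_0(\nfk)_{(\ell)}$, so the set of $\ell$ with $\Hcal_0(\nfk)_{(\ell)}=\{0\}$ is upward closed, and $b^{\mathrm{true}}(\nfk)$ is by definition its least element. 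By part~(2) the vanishing $\Hcal_0(\nfk)_{(\ell)}=\{0\}$ is equivalent to $g(\Gcal(\nfk)^o_{(\ell)})=0$, whence $b^{\mathrm{true}}(\nfk)=\min\{\ell\in\ZZ_+\mid g(\Gcal(\nfk)^o_{(\ell)})=0\}$.

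The hard part will be part~(2): I must make sure that passing to the quotient graph does not disturb the cochain/cycle correspondence. In particular one should check that no surviving edge of $\Gcal(\nfk)^o_{(\ell)}$ is inverted by an element of $\Gamma_0(\nfk)$ (which would force the cochain to vanish there), and that the boundary vertices created by deleting edges still impose exactly the linear conditions encoded in $H_1$. Since the analogous identification for the full graph is already in force via Remark~\ref{rem: Har}, the same bookkeeping transfers verbatim to the subgraph, so this step is a matter of care rather than a genuinely new difficulty.
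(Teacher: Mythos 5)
Your proposal is correct and follows essentially the route the paper intends: the paper states this lemma without proof, as an immediate consequence of the Fourier-expansion criterion recorded just before it and of the identification of finitely supported harmonic cochains with the cycle space of the (sub)graph, which is exactly what you spell out, and your parts (1) and (3) are exactly the paper's implicit reductions. The one point to handle with a little care in part (2) is that descending the tree harmonicity to the quotient graph a priori yields a stabilizer-weighted vertex condition; rescaling by $\#\Stab_{\Gamma_0(\nfk)}(e)$ converts it into the ordinary cycle condition (and there is no edge inversion since $\det\gamma\in\FF_q^\times$ preserves vertex types), so the dimension is indeed the cycle rank $g(\Gcal(\nfk)^o_{(\ell)})$, exactly as in the cited computation for the full graph.
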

With the help of SageMath, we compute values of this genus. For $n \in \ZZ$ with $n \geq 3$ we put
$$b^{\text{true}}(n):= \max\{b^{\text{true}}(\nfk)\mid \deg \nfk = n\}, \quad
b'(n) := \max\{b'(\nfk) \mid \deg \nfk = n\}.$$
Using this method we have obtained the following data which show that our predicted bound $b'(n)$ actually reaches the sharp bound $b^{\text{true}}(n)$ in certain cases.
\begin{eqnarray} \label{compare bounds}
\begin{tabular}{|c|c|c|}
\multicolumn{3}{c}{$q=2$} \\
\hline 
$n$ & $b^{\text{true}}(n)$ & $b'(n)$ \\
\hline
3 & 1 & 2 \\
4 & 3 & 5\\
5 & 5 & 6\\
6 & 6 & 7 \\
7 & 8 & 8 \\
8 & 9 & 9 \\
9 & 10 & 10 \\
10 & 11 & 13\\
\hline
\end{tabular}
\quad \quad 
\begin{tabular}{|c|c|c|}
\multicolumn{3}{c}{$q=3$} \\
\hline 
$n$ & $b^{\text{true}}(n)$& $b'(n)$ \\
\hline
3 & 1 & 2\\
4 & 3 & 3 \\
5 & 4 & 6 \\
6 & 6 & 7 \\
7 & 8 & 8 \\
8 & 9 & 9 \\
9 & 10 & 10 \\
10 & 11 & 11 \\
\hline
\end{tabular}
\end{eqnarray}


\subsection{The non-cuspidal case}\label{subsec: Non-cusp}

Suppose $\nfk \in A_+$ is given.
For $f \in \Hcal(\nfk)$,
we first show that the constant coefficient $c_0(f)$ in the Fourier expansion \eqref{eqn: Four-exp} is uniquely determined by $c_\mfk(f)$ for finitely many $\mfk \in A_+$:

\begin{lem}\label{lem: const}
Suppose $\nfk \in A_+$ is given.
For $f \in \Hcal(\nfk)$ with $c_\mfk(f) = 0$ for all $\deg \mfk < \deg \nfk$, we must have $c_0(f) = 0$.
\end{lem}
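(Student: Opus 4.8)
The plan is to reduce the statement to a single edge‑value and then exploit the Atkin–Lehner involution together with the explicit horocyclic representatives of Lemma~\ref{lem: Key-Lem}. By Remark~\ref{rem: cOcusp} we have $c_0(f)=q^{-2}f(e_0)$, and by the Fourier expansion \eqref{eqn: Four-exp} the hypothesis $c_\mfk(f)=0$ for $\deg\mfk<\deg\nfk$ makes the expansion collapse at every low horocyclic level: for $r\le\deg\nfk$ and all $u\in K_\infty$ one gets $f\bigl(\begin{smallmatrix}\pi_\infty^{r}&u\\0&1\end{smallmatrix}\bigr)=q^{-r+2}c_0(f)$, independent of $u$. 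Thus $f$ is completely pinned down near the cusp $[0:1]$ by the single scalar $c_0(f)$, and the task is exactly to show that the datum $f(e_0)$ at that cusp vanishes.

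The mechanism I would use is that $W_\nfk$ interchanges the two principal ends $[0:1]\leftrightarrow[1:0]$, trading the constant term for the behaviour of $f$ at horocyclic level $\approx\deg\nfk$. Concretely, applying Lemma~\ref{lem: Key-Lem}\eqref{lem: Key-Lem2} to $\gamma=I$ (the representative of the cusp $[0:1]$, with $\delta_\nfk(0,1)=0$ and $\epsilon_\nfk(0,1)=1$) expresses $[e_0]$ as $w_\nfk\bigl(\begin{smallmatrix}\pi_\infty^{\deg\nfk+1}&u\\0&1\end{smallmatrix}\bigr)\bigl(\begin{smallmatrix}0&1\\\pi_\infty&0\end{smallmatrix}\bigr)$, so that $q^{2}c_0(f)=f(e_0)=-(f|W_\nfk)\bigl(\begin{smallmatrix}\pi_\infty^{\deg\nfk+1}&u\\0&1\end{smallmatrix}\bigr)$. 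Reading the right-hand side through the Fourier expansion of $f|W_\nfk$ involves only $c_0(f|W_\nfk)$ and the coefficients $c_\mfk(f|W_\nfk)$ with $\deg\mfk\le\deg\nfk-1$. The key observation to establish is that, under the hypothesis, each of these low coefficients of $f|W_\nfk$ is itself an average of values of $f$ on edges lying at controlled horocyclic level on the cusp $[1:0]$ side, where the collapse of the first paragraph applies; in particular $c_0(f|W_\nfk)=-q^{-1-\deg\nfk}c_0(f)$ and the remaining low coefficients are forced multiples of $c_0(f)$. Feeding this back yields a self-referential identity $c_0(f)=(\text{power of }q)\cdot c_0(f)$; since the exponent is nonzero (it reflects the asymmetry of the widths $1$ and $\nfk$ of the two cusps), the scalar differs from $1$ and $c_0(f)=0$ follows.

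The hard part will be the bookkeeping of degrees: I must verify that no Fourier mode $c_\mfk(f)$ with $\deg\mfk\ge\deg\nfk$ sneaks into the $W_\nfk$-transformed values at level $\deg\nfk+1$, so that the weak hypothesis genuinely suffices, and that the final power of $q$ is not $q^{0}$. For this I would use Theorem~\ref{thm: Quo-G-dec} to check that at horocyclic level $\le\deg\nfk$ only the two principal ends and the finite core intervene, the other cusps $s$ sitting strictly deeper by Lemma~\ref{lem: Key-Lem} and hence not contributing. As an alternative, cleaner route that avoids computing coefficients of $f|W_\nfk$, one can instead combine the exact sequence of Remark~\ref{rem: Har}\eqref{rem: Har}: since $c_0$ vanishes on $\Hcal_0(\nfk)$ it factors as a linear functional of the cusp data $\bigl(f(\gamma_s e_{\ell_s})\bigr)_{s\neq[0:1]}$, and one shows each such datum is, under the hypothesis, a determined multiple of $c_0(f)$, producing the same linear equation $\lambda\,c_0(f)=0$ with $\lambda\neq0$.
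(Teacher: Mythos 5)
Your reductions $c_0(f)=q^{-2}f(e_0)$ and $f(e_0)=-(f|W_\nfk)\bigl(\begin{smallmatrix}\pi_\infty^{\deg\nfk+1}&u\\0&1\end{smallmatrix}\bigr)$ are correct, but the step you defer as ``bookkeeping'' is exactly where the argument breaks, and it does not merely require care --- it fails. To read that value off the Fourier expansion of $f|W_\nfk$ you need $c_0(f|W_\nfk)$ and all $c_\mfk(f|W_\nfk)$ with $\deg\mfk\le\deg\nfk-1$, and these are \emph{not} forced multiples of $c_0(f)$ by the hypothesis. Indeed, $c_\mfk(f|W_\nfk)$ is an integral of $f$ over the edges $w_\nfk\bigl(\begin{smallmatrix}\pi_\infty^{s}&u\\0&1\end{smallmatrix}\bigr)e_0$ with $s=\deg\mfk+2$ and $u$ running over $\pi_\infty O_\infty$. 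Writing such an edge as $\pm\bigl(\begin{smallmatrix}\pi_\infty^{r'}&u'\\0&1\end{smallmatrix}\bigr)e_0$ via the Iwasawa decomposition, one finds $r'=\deg\nfk+s-2\nu_\infty(u)$ on the region $1\le\nu_\infty(u)\le s-1$ (and $r'=\deg\nfk-s+1$ on $\nu_\infty(u)\ge s$). For $\nu_\infty(u)=1$ this gives $r'=\deg\nfk+\deg\mfk$, which exceeds $\deg\nfk+1$ as soon as $\deg\mfk\ge2$; at those levels the expansion \eqref{eqn: Four-exp} involves the uncontrolled coefficients $c_{\mfk'}(f)$ with $\deg\mfk'\ge\deg\nfk$, so the ``collapse'' you establish in your first paragraph does not apply and the modes you hoped to exclude do sneak in. These high-level edges are genuinely not $\Gamma_0(\nfk)$-reducible to low level in general --- otherwise the true Sturm bound would always be $\deg\nfk-1$, contradicting the tables in \eqref{compare bounds}. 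The same objection hits your alternative route via the cusp data $f(\gamma_s e_{\ell_s})$: by Lemma~\ref{lem: Key-Lem}\eqref{lem: Key-Lem1} those edges sit at level up to $2\deg\nfk-1$. In the end only $c_0(f|W_\nfk)$ and $c_\mfk(f|W_\nfk)$ with $\deg\mfk\le1$ are pinned down by the hypothesis, so your scheme closes only for $\deg\nfk\le2$.

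The paper's proof avoids the Fourier expansion of $f|W_\nfk$ entirely: it applies the single harmonicity relation $g(e)+g(\bar e)=0$ to $g=f|W_\nfk$ at the one edge represented by $\bigl(\begin{smallmatrix}0&1\\1&1\end{smallmatrix}\bigr)$. After multiplying by suitable elements of $\Gamma_0(\nfk)$, both terms become values of $f$ itself at upper-triangular matrices of levels $\deg\nfk$ and $\deg\nfk+1$, where your ``collapse'' does apply, yielding $q^{-\deg\nfk+2}c_0(f)+q^{-\deg\nfk+1}c_0(f)=0$ and hence $c_0(f)=0$. That is the ingredient your write-up is missing: a relation whose every term lands in the controlled range $r\le\deg\nfk+1$, rather than the full expansion of $f|W_\nfk$, whose low-degree coefficients see the deep part of the tree.
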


\begin{proof}
For $f{|W_\nfk} \in \Hcal(\nfk)$, the harmonicity property gives:
\begin{eqnarray}\label{eqn: non-cusp}
0 &=& (f|W_\nfk)\begin{pmatrix}0&1 \\ 1&1\end{pmatrix} + (f|W_\nfk)\left(\begin{pmatrix} 0&1 \\ 1&1\end{pmatrix}\begin{pmatrix} 0&\theta \\ 1 &0\end{pmatrix}\right) \nonumber \\
&=& f \begin{pmatrix} 1& 1 \\ 0 & \nfk \end{pmatrix}
+ f \left(\begin{pmatrix} 1 & 0 \\ \nfk & 1\end{pmatrix} \begin{pmatrix} 0&-1 \\ \nfk&0\end{pmatrix}
\begin{pmatrix} 1&0 \\ 1 & \theta \end{pmatrix}\right) 
\nonumber \\
&=& f\begin{pmatrix} \pi_\infty^{\deg \nfk} & -\nfk^{-1} \\ 0&1\end{pmatrix} + f\begin{pmatrix} \pi_\infty^{\deg \nfk + 1} & \nfk^{-1} \\ 0&1\end{pmatrix}.
\end{eqnarray}
Suppose $c_\mfk(f) = 0$ for all $\mfk \in A_+$ with $\deg \mfk < \deg \nfk$.
Then the Fourier expansion of $f$ \eqref{eqn: Four-exp} implies 
$$f\begin{pmatrix} \pi_\infty^{\deg \nfk} & -\nfk^{-1} \\ 0&1\end{pmatrix} = q^{-\deg \nfk +2} c_0(f) \quad \text{and} \quad f\begin{pmatrix} \pi_\infty^{\deg \nfk + 1} & \nfk^{-1} \\ 0&1\end{pmatrix}
= q^{-\deg \nfk + 1} c_0(f).
$$
From \eqref{eqn: non-cusp}, we get $c_0(f) = 0$.
\end{proof}

Therefore:

\begin{prop}\label{prop: SBound.non-cusp}
Let $\nfk \in A_+$. Then $f \in \Hcal(\nfk)$ is identically zero if $c_\mfk(f) = 0$ for $\mfk \in A_+$ with $\deg \mfk \leq \deg \nfk - 1 + 2 \tau(\nfk)$.
\end{prop}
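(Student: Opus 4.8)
The plan is to run the same Atkin--Lehner argument used for the cuspidal bound (Proposition~\ref{prop: Sbound.1} together with Corollary~\ref{cor: SBound.2}), inserting one preliminary step to dispose of the constant Fourier coefficient, which in the non-cuspidal setting is no longer automatically zero. Concretely, given $f \in \Hcal(\nfk)$ with $c_\mfk(f) = 0$ for all $\mfk \in A_+$ of degree at most $\deg \nfk - 1 + 2\tau(\nfk)$, I would reduce everything to the assertion that the companion cochain $f|W_\nfk$ vanishes on all edges $\gamma e_0$, and then read those values off the Fourier expansion of $f$ along the standard end.

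First I would observe that, since $\tau(\nfk) \geq 0$, the hypothesis already forces $c_\mfk(f) = 0$ for every $\mfk$ with $\deg \mfk < \deg \nfk$; Lemma~\ref{lem: const} then yields $c_0(f) = 0$. With the constant term gone, the Fourier expansion \eqref{eqn: Four-exp} gives
\[
f\begin{pmatrix} \pi_\infty^r & u \\ 0 & 1 \end{pmatrix} = 0 \quad \text{for all } u \in K_\infty \text{ and all } r \leq \deg \nfk + 1 + 2\tau(\nfk),
\]
because the only surviving terms involve coefficients $c_\mfk(f)$ with $\deg \mfk \leq r - 2 \leq \deg \nfk - 1 + 2\tau(\nfk)$, all of which vanish.

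Next I would set $f' := f|W_\nfk \in \Hcal(\nfk)$ and invoke Lemma~\ref{lem: UD-Lem} in its full (non-cuspidal) form, so that over \emph{all} $\gamma \in \Gamma_0(\nfk)\backslash\Gamma$ the vanishing of $f'$ is reduced to showing $f'(\gamma e_0) = 0$; this will suffice since $f = f'|W_\nfk$, using that $w_\nfk^2 = -\nfk \cdot \mathrm{id}$ acts trivially on $\sT$, whence $W_\nfk^2 = \mathrm{id}$. Fixing a representative $\gamma = \begin{pmatrix} a & b \\ c & d \end{pmatrix}$ with $c \mid \nfk$ and $\deg c, \deg d < \deg \nfk$, Lemma~\ref{lem: Key-Lem.2} represents $[\gamma e_0]$ by $w_\nfk \begin{pmatrix} \pi_\infty^D & u \\ 0 & 1 \end{pmatrix}\begin{pmatrix} 0 & 1 \\ \pi_\infty & 0\end{pmatrix}^{\epsilon_\nfk(c,d)}$ with $D = \deg \nfk + 2\delta_\nfk(c,d) + \epsilon_\nfk(c,d)$. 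The estimate $\delta_\nfk(c,d) \leq \tau(\nfk)$ established in the proof of Corollary~\ref{cor: SBound.2} --- which is a statement about $\delta_\nfk$ and $\tau$ alone, and so applies to every coset, including those lying on the ends --- gives $D \leq \deg \nfk + 1 + 2\tau(\nfk)$. Unwinding the definition of $W_\nfk$ turns $f'(\gamma e_0)$ into a value of $f'|W_\nfk = f$ along the standard end at level $D$ (with a sign coming from harmonicity when $\epsilon_\nfk(c,d) = 1$, since $\begin{pmatrix} 0 & 1 \\ \pi_\infty & 0 \end{pmatrix}$ represents the opposite edge), and this value is $0$ by the display above. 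Hence $f' \equiv 0$, and therefore $f \equiv 0$.

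The only genuinely new point compared with the cuspidal case is the constant coefficient $c_0(f)$, and I expect it to be the main thing to get right: Lemma~\ref{lem: const} is tailored precisely to kill it from the vanishing of the low-degree $c_\mfk(f)$, and it is what lets the standard-end values in \eqref{eqn: Four-exp} vanish all the way up to level $\deg \nfk + 1 + 2\tau(\nfk)$. One should also verify that the cosets $\gamma$ with $\deg \nfk_\gamma < 2$ --- which sit on the ends and were excluded in the cuspidal argument --- are genuinely covered here; this is fine, since the bound $\delta_\nfk(c,d) \leq \tau(\nfk)$ holds for them as well. Finally, the maximal level $D - 2 = \deg \nfk - 1 + 2\tau(\nfk)$ occurring in the argument is exactly the degree cutoff in the statement, so the bound is the sharpest this method delivers.
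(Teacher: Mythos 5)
Your proposal is correct and follows essentially the same route as the paper: kill $c_0(f)$ via Lemma~\ref{lem: const}, deduce vanishing of $f$ along the standard end up to level $\deg \nfk + 1 + 2\tau(\nfk)$ from \eqref{eqn: Four-exp}, then use Lemma~\ref{lem: Key-Lem.2} together with the bound $\delta_\nfk(c,d) \leq \tau(\nfk)$ from Corollary~\ref{cor: SBound.2} and the full form of Lemma~\ref{lem: UD-Lem} to conclude that $f|W_\nfk$, and hence $f$, vanishes. Your explicit checks (the involutive property of $W_\nfk$, and that the cosets with $\deg \nfk_\gamma < 2$ are covered) are points the paper leaves implicit but are handled correctly.
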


\begin{proof}
Let $f \in \Hcal(\nfk)$ with $c_\mfk(f) = 0$ for all $\mfk \in A_+$ such that $\deg \mfk \leq \deg \nfk - 1 + 2 \tau(\nfk)$. By Lemma~\ref{lem: const} we have $c_0(f) = 0$, which shows by the Fourier expansion \eqref{eqn: Four-exp} that
$$\forall r \leq \deg \nfk + 1 + 2 \tau(\nfk),\ \forall u \in K_\infty,\quad f\begin{pmatrix} \pi_\infty^r & u \\ 0&1\end{pmatrix} = 0. 
$$

Let $f':= f|W_\nfk$. Then Lemma~\ref{lem: Key-Lem.2} and Corollary~\ref{cor: SBound.2} implies that
$f'(\gamma e_0) = 0$ for all $\gamma \in \Gamma$.
Therefore $f'$ is identically zero by Lemma~\ref{lem: UD-Lem}, and so is $f$.
\end{proof}

\begin{rem}\label{rem: otherboundsforH}
\hspace{2em}
\begin{enumerate}
 \item As directly seen from the proof, the bound in Proposition~\ref{prop: SBound.non-cusp} can be improved to $\max(\deg \nfk-2+\ell(\nfk),\deg \nfk-1)$.
 \item\label{rem: otherboundsforHcoarse} Similarly to Proposition~\ref{prop: SBound-0}, by combining Lemma~\ref{lem: const} and Lemma \ref{lem: Key-Lem}\eqref{lem: Key-Lem1}, we may obtain for $\Hcal(\nfk)$ the coarse bound $\max (2 \deg \nfk -3,\deg \nfk -1)$ which does not involve $\tau(\nfk)$ nor $\ell(\nfk)$.
\end{enumerate}
\end{rem}

\section{Applications}\label{sec: App}

\subsection{Generators of the Hecke algebra}\label{subsec: Hec-Alg}

Suppose $\nfk \in A_+$ is given.
For $\mfk \in A_+$, the \emph{$\mfk$-th Hecke operator $T_\mfk$ on $\Hcal(\nfk)$} is defined by:
$$\forall e \in E(\sT),\quad(f|T_\mfk)(e) := \sum f\left(\begin{pmatrix} \afk&b \\ 0&\dfk\end{pmatrix}e\right),
$$
where the sum is over $\afk,\dfk \in A_+$, $b \in A$ with $\afk\dfk = \mfk$, $\text{gcd}(\afk,\nfk) = 1$, and $\deg b < \deg \dfk$.
It is known that $T_\mfk$ and $T_{\mfk'}$ commute with each other if $\text{gcd}(\mfk,\mfk') = 1$, and for any prime $\pfk \in A_+$ and $r \in \ZZ_{\geq 0}$, one has
$$T_{\pfk^{r+2}} = T_{\pfk^{r+1}} T_\pfk - \mu_\nfk(\pfk)\cdot |\pfk|_\infty \cdot T_{\pfk^r},$$
where $\mu_\nfk(\pfk) := 1$ if $\pfk \nmid \nfk$ and $0$ otherwise.
Moreover, for $f \in \Hcal(\nfk)$, it can be checked that
$$\forall \mfk \in A_+,\quad c_1(f|T_\mfk) = c_\mfk(f).$$
Let $\TT(\nfk):=\CC[T_\mfk\, |\, \mfk \in A_+] \subset \End_\CC\big(\Hcal(\nfk)\big)$, the Hecke algebra on $\Hcal(\nfk)$.

\begin{lem}\label{lem: perfect-pairing}
We have the following perfect pairing:
$$
\begin{tabular}{crclcc}
$\langle \cdot,\cdot\rangle \ :$ & $\Hcal(\nfk)$ &$\times$ &  $\TT(\nfk)$ & $\longrightarrow$ & $\CC$ \\
& $(\ f$ &, & $T\ )$ & $\longmapsto$ & $c_1(f|T)$
\end{tabular}
$$
which satisfies $\langle f|T,T'\rangle  = \langle f,TT'\rangle $ for all $f \in \Hcal(\nfk)$ and $T,T' \in \TT(\nfk)$.
\end{lem}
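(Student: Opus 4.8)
The plan is to derive everything from three ingredients already in place: the identity $c_\mfk(f)=c_1(f|T_\mfk)$ for $\mfk\in A_+$, the commutativity of $\TT(\nfk)$, and the Fourier vanishing criterion together with Lemma~\ref{lem: const}. First I would note that $\TT(\nfk)$ is commutative: the recursion $T_{\pfk^{r+2}}=T_{\pfk^{r+1}}T_\pfk-\mu_\nfk(\pfk)|\pfk|_\infty T_{\pfk^r}$ exhibits each $T_{\pfk^r}$ as a polynomial in $T_\pfk$, while $T_\mfk=\prod_i T_{\pfk_i^{r_i}}$ for the prime factorization $\mfk=\prod_i\pfk_i^{r_i}$, combined with the stated commutation of $T_\mfk$ and $T_{\mfk'}$ for coprime arguments, shows that all generators commute. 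Since $\Hcal(\nfk)$ is finite-dimensional by Remark~\ref{rem: Har}(3) and $\TT(\nfk)\subset\End_\CC(\Hcal(\nfk))$, it will suffice to verify that the pairing is nondegenerate on each side; injectivity of both induced maps then forces equal dimensions and hence perfectness. The compatibility $\langle f|T,T'\rangle=\langle f,TT'\rangle$ is formal: as the Hecke action is a right action, $(f|T)|T'=f|(TT')$, and applying $c_1$ gives the claim with no use of commutativity.

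For left nondegeneracy, I would suppose $\langle f,T\rangle=0$ for every $T\in\TT(\nfk)$. Specializing to $T=T_\mfk$ and invoking $c_\mfk(f)=c_1(f|T_\mfk)$ yields $c_\mfk(f)=0$ for all $\mfk\in A_+$. The subtle point here is that the pairing only sees the coefficients indexed by $A_+$ and never directly controls the constant term $c_0(f)$. This is precisely where Lemma~\ref{lem: const} enters: the vanishing of $c_\mfk(f)$ for $\deg\mfk<\deg\nfk$ already forces $c_0(f)=0$. With all Fourier coefficients vanishing, the Fourier vanishing criterion gives $f=0$.

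For right nondegeneracy, I would fix $T\in\TT(\nfk)$ with $\langle f,T\rangle=0$ for all $f$, fix an arbitrary $f$, and set $g:=f|T$, aiming to show $g=0$ by checking $c_\mfk(g)=0$ for all $\mfk\in A_+$. Using the identity and commutativity of $\TT(\nfk)$,
$$c_\mfk(g)=c_1\big((f|T)|T_\mfk\big)=c_1\big((f|T_\mfk)|T\big)=\langle f|T_\mfk,T\rangle=0,$$
where the last equality is the hypothesis applied to $f|T_\mfk\in\Hcal(\nfk)$. As in the previous paragraph, Lemma~\ref{lem: const} and the Fourier criterion then give $g=f|T=0$; since $f$ was arbitrary, $T$ is the zero operator, i.e.\ $T=0$ in $\TT(\nfk)$. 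The argument is short, and I expect the only genuine obstacle to be conceptual rather than computational: because $c_1$-pairing is blind to $c_0$, each nondegeneracy step must route the constant term through Lemma~\ref{lem: const} rather than appealing to a Sturm bound that presupposes it.
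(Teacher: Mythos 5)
Your proposal is correct and is essentially the paper's intended argument: the paper's proof is a one-line instruction to adapt Gekeler's cuspidal-case proof (nondegeneracy on both sides via $c_\mfk(f)=c_1(f|T_\mfk)$, commutativity of the Hecke operators, and the Fourier vanishing criterion) by using Lemma~\ref{lem: const} to recover the constant term $c_0$, which is exactly what you have written out in detail.
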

\begin{proof}
Adapt Gekeler's proof \cite[Theorem 3.17]{Gek2} in the cuspidal case by using Lemma~\ref{lem: const} and the Fourier expansion~\eqref{eqn: Four-exp}.
\end{proof}
%

From this perfect pairing, Proposition~\ref{prop: SBound.non-cusp} provides a bound for the number of Hecke operators generating $\TT(\nfk)$:

\begin{cor}\label{cor: Hec-gen}
The Hecke algebra $\TT(\nfk)$ is spanned as a $\CC$-vector space by $T_\mfk$ for $\mfk \in A_+$ with $\deg \mfk \leq \deg \nfk - 1 + 2 \tau(\nfk)$.
\end{cor}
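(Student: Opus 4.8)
The plan is to leverage the perfect pairing established in Lemma~\ref{lem: perfect-pairing} together with the Sturm-type vanishing bound of Proposition~\ref{prop: SBound.non-cusp}. The key structural fact is that the pairing $\langle f, T \rangle = c_1(f|T)$ satisfies $c_1(f|T_\mfk) = c_\mfk(f)$, which lets us translate a statement about Fourier coefficients of harmonic cochains into a statement about the span of Hecke operators. So first I would set $B := \deg \nfk - 1 + 2\tau(\nfk)$ and consider the subspace $\TT'(\nfk) \subseteq \TT(\nfk)$ spanned by $\{T_\mfk \mid \mfk \in A_+,\ \deg \mfk \leq B\}$. The goal is to show $\TT'(\nfk) = \TT(\nfk)$.

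The standard approach is an orthogonal-complement argument. Consider the annihilator of $\TT'(\nfk)$ in $\Hcal(\nfk)$ under the pairing, namely
\[
V := \{ f \in \Hcal(\nfk) \mid \langle f, T_\mfk \rangle = 0 \text{ for all } \mfk \in A_+ \text{ with } \deg \mfk \leq B \}.
\]
By the identity $\langle f, T_\mfk\rangle = c_1(f|T_\mfk) = c_\mfk(f)$, an element $f$ lies in $V$ precisely when $c_\mfk(f) = 0$ for all $\mfk \in A_+$ with $\deg \mfk \leq B$. By Proposition~\ref{prop: SBound.non-cusp}, this forces $f = 0$, so $V = \{0\}$. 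Since the pairing is perfect (so it identifies $\TT(\nfk)$ with the full dual $\Hcal(\nfk)^\vee$), the vanishing of the annihilator of $\TT'(\nfk)$ means that $\TT'(\nfk)$ already separates all of $\Hcal(\nfk)$, and a finite-dimensional duality argument then yields $\TT'(\nfk) = \TT(\nfk)$.

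To make the last implication precise I would argue as follows. Perfectness gives an isomorphism $\TT(\nfk) \xrightarrow{\sim} \Hcal(\nfk)^\vee$ sending $T$ to the functional $f \mapsto \langle f, T\rangle$; in particular $\dim_\CC \TT(\nfk) = \dim_\CC \Hcal(\nfk)$, both finite by Remark~\ref{rem: Har}. Under this isomorphism $\TT'(\nfk)$ maps to the subspace of functionals spanned by $\{f \mapsto c_\mfk(f) \mid \deg \mfk \leq B\}$, whose common kernel is exactly $V = \{0\}$. A subspace of the dual of a finite-dimensional space with trivial common kernel must be the entire dual; hence $\TT'(\nfk) = \TT(\nfk)$, which is the assertion of the corollary.

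The step requiring the most care is the clean passage from "the listed operators have trivial joint annihilator" to "the listed operators span the whole algebra." This is where finite-dimensionality and perfectness of the pairing are both essential: without perfectness one would only get that $\TT'(\nfk)$ is weak-$*$ dense in some sense, not literally equal to $\TT(\nfk)$. I expect this to be routine linear algebra once the ingredients are assembled, so the genuine mathematical content is entirely imported from Lemma~\ref{lem: perfect-pairing} and Proposition~\ref{prop: SBound.non-cusp}; the corollary is essentially a formal dualization of the vanishing bound.
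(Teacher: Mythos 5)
Your proposal is correct and follows essentially the same route as the paper: the paper's proof also defines the subspace $\TT'$ spanned by the listed $T_\mfk$, uses $\langle f, T_\mfk\rangle = c_\mfk(f)$ together with Proposition~\ref{prop: SBound.non-cusp} to see that the pairing embeds $\Hcal(\nfk)$ into the dual of $\TT'$, and concludes $\TT' = \TT(\nfk)$ from perfectness by finite-dimensional duality. Your write-up merely makes the final linear-algebra step more explicit.
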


\begin{proof}
Let $\TT'$ be the $\CC$-subspace spanned by $T_\mfk$ for $\mfk \in A_+$ with $\deg\mfk \leq \deg \nfk - 1 + 2 \tau(\nfk)$.
Then Proposition~\ref{prop: SBound.non-cusp} shows that the pairing $\langle \cdot,\cdot \rangle$ gives an embedding map from $\Hcal(\nfk)$ to the dual space of $\TT'$.
This implies $\TT' = \TT(\nfk)$ from the perfectness of $\langle \cdot,\cdot\rangle$.
\end{proof}


Note that $\Hcal_0(\nfk)$ is invariant by $\TT(\nfk)$.
Let $\TT_0(\nfk)$ be the image of $\TT(\nfk)$ in $\End_\CC\big(\Hcal_0(\nfk)\big)$ under the restriction map.
The pairing $\langle \cdot,\cdot\rangle$ restricted to $\Hcal_0(\nfk) \times \TT_0(\nfk)$ is still perfect.
Similarly, by Proposition~\ref{prop: Sbound.1} and Corollary~\ref{cor: SBound.3}, we obtain the following result for the cuspidal Hecke algebra:

\begin{cor}
The cuspidal Hecke algebra $\TT_0(\nfk)$ is spanned as a $\CC$-vector space by $T_\mfk$ for $\mfk \in A_+$ with $\deg \mfk \leq \deg \nfk -2 + \ell(\nfk)$, where $\ell(\nfk)$ is defined in \text{\rm Definition~\ref{defn: l(n)}}.
Moreover, if $\nfk$ is a prime power, then $\TT_0(\nfk)$ is spanned by $T_\mfk$ for $\mfk \in A_+$ with $\deg \mfk \leq \deg \nfk - 2$.
\end{cor}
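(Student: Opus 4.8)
The plan is to mirror the proof of Corollary~\ref{cor: Hec-gen} almost verbatim, replacing the non-cuspidal Sturm bound by its cuspidal counterparts and working with the restricted perfect pairing on $\Hcal_0(\nfk) \times \TT_0(\nfk)$. Recall from the discussion preceding the statement that this restricted pairing is still perfect and that the identity $c_1(f|T_\mfk) = c_\mfk(f)$ continues to hold for every $f \in \Hcal_0(\nfk)$, since it is a formal consequence of the Fourier expansion~\eqref{eqn: Four-exp} and is insensitive to cuspidality.

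First I would treat the general bound. Let $\TT_0'$ denote the $\CC$-subspace of $\TT_0(\nfk)$ spanned by the operators $T_\mfk$ with $\deg \mfk \leq \deg \nfk - 2 + \ell(\nfk)$. The key step is to observe that, by the identity above, the condition $\langle f, T_\mfk \rangle = 0$ for all such $\mfk$ is precisely the vanishing $c_\mfk(f) = 0$ for every $\mfk \in A_+$ with $\deg \mfk \leq \deg \nfk - 2 + \ell(\nfk)$. Proposition~\ref{prop: Sbound.1} then forces $f = 0$. Hence the restriction of the pairing to $\Hcal_0(\nfk) \times \TT_0'$ has trivial left kernel, giving an injection $\Hcal_0(\nfk) \hookrightarrow (\TT_0')^\vee$ and thus $\dim_\CC \TT_0' \geq \dim_\CC \Hcal_0(\nfk)$.

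To conclude I would invoke the perfectness of the full pairing on $\Hcal_0(\nfk) \times \TT_0(\nfk)$, which yields $\dim_\CC \TT_0(\nfk) = \dim_\CC \Hcal_0(\nfk)$. Combined with the trivial inclusion $\TT_0' \subseteq \TT_0(\nfk)$ and the dimension inequality just obtained, this forces $\TT_0' = \TT_0(\nfk)$, establishing the first assertion. For the prime-power case the argument is identical, except that the sharper Sturm bound of Corollary~\ref{cor: SBound.3} is used in place of Proposition~\ref{prop: Sbound.1}, so that one may take $\TT_0'$ spanned by the $T_\mfk$ with $\deg \mfk \leq \deg \nfk - 2$.

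I do not anticipate a genuine obstacle, as every ingredient is already in place; the only point requiring care is the verification that the perfect pairing genuinely descends to $\Hcal_0(\nfk) \times \TT_0(\nfk)$ (equivalently, that $\Hcal_0(\nfk)$ is Hecke-stable and that the restricted operators $T_\mfk$ still satisfy $c_1(f|T_\mfk) = c_\mfk(f)$ on cuspidal forms). This is exactly what is recorded in the paragraph preceding the statement, so the proof reduces to assembling these facts in the dimension-counting argument above.
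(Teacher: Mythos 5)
Your proposal is correct and follows essentially the same route as the paper: the paper likewise restricts the perfect pairing $\langle f, T\rangle = c_1(f|T)$ to $\Hcal_0(\nfk)\times\TT_0(\nfk)$ and combines it with Proposition~\ref{prop: Sbound.1} (general case) and Corollary~\ref{cor: SBound.3} (prime-power case), exactly mirroring the argument of Corollary~\ref{cor: Hec-gen}. Your explicit dimension count is just an unwinding of the paper's one-line deduction that an embedding $\Hcal_0(\nfk)\hookrightarrow(\TT_0')^\vee$ together with perfectness forces $\TT_0'=\TT_0(\nfk)$.
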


\begin{rem}\label{rem: coarseboundforHecke}
The coarse bounds of Proposition~\ref{prop: SBound-0} and Remark \ref{rem: otherboundsforH}\eqref{rem: otherboundsforHcoarse} for $\Hcal_0(\nfk)$ and $\Hcal(\nfk)$ translate directly into the same bounds for the Hecke algebras $\TT_0(\nfk)$ and $\TT(\nfk)$, respectively.
\end{rem}

\subsection{The case of the \lq\lq new\rq\rq\ subspace}\label{subsec: new-space}

Given $f_1,f_2 \in \Hcal_0(\nfk)$, recall the \emph{Petersson inner product}:
$$\langle f_1, f_2 \rangle_{\text{Pet}} := \sum_{[e] \in E(\Gcal(\nfk))} \frac{f_1(e)\overline{f_2(e)}}{\#\text{Stab}_{\Gamma_0(\nfk)}(e)}$$
where $\overline{\cdot}$ denotes here the complex conjugation. A cuspidal harmonic cochain is called \emph{old} if it is a $\CC$-linear combination of the following type of harmonic cochains:
$$\forall e \in E(\sT),\quad f_{\mfk'}(e) := f\left(\begin{pmatrix}1 & 0 \\ 0 & \mfk' \end{pmatrix} e\right)$$
where $f \in \Hcal_0(\mfk)$ with $\mfk, \mfk' \in A_+$, $(\mfk\cdot \mfk') \mid \nfk$ and $\mfk \neq \nfk$.
Let 
$$\Hcal_0^{\text{new}}(\nfk):=\big\{f \in \Hcal_0(\nfk) \ \big| \text{for all old } f' \in \Hcal_0(\nfk), \ \langle f, f'\rangle_{\text{Pet}} = 0 \big\}.$$

We then have:
\begin{lem}\label{lem: Key-Lem.3}
Given $\nfk \in A_+$, suppose $\nfk$ is square-free.
Then $f \in \Hcal_0(\nfk)$ is identically zero if, for every $u \in K_\infty$ and $\mfk \in A_+$ with $\mfk \mid \nfk$ and $\deg \mfk \leq \deg \nfk-2$,
\begin{eqnarray}\label{eqn: sqr-free}
(f|W_\mfk)\begin{pmatrix} \pi_\infty^{\deg \nfk-\deg \mfk} & u \\ 0 & 1\end{pmatrix} &=& 0.
\end{eqnarray}
\end{lem}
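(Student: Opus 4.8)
=== PROOF PROPOSAL ===

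\textbf{Overview of the strategy.} The plan is to show that, when $\nfk$ is square-free, the vanishing condition \eqref{eqn: sqr-free} for all divisors $\mfk \mid \nfk$ with $\deg\mfk \leq \deg\nfk - 2$ already forces $f$ to vanish on every edge $\gamma e_0$ with $\gamma \in \Gamma_0(\nfk)\backslash\Gamma$; by Lemma~\ref{lem: UD-Lem} this implies $f \equiv 0$. The natural tool is Lemma~\ref{lem: Key-Lem.2}, which represents each $[\gamma e_0]$ as a translate by $w_\nfk$ of an edge of the form $\begin{pmatrix}\pi_\infty^{N} & u \\ 0 & 1\end{pmatrix}\begin{pmatrix}0&1\\\pi_\infty&0\end{pmatrix}^{\epsilon}$ with $N = \deg\nfk + 2\delta_\nfk(c,d) + \epsilon_\nfk(c,d)$. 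However, since here we are only given vanishing of the $W_\mfk$-twists for \emph{divisors} $\mfk$ of $\nfk$ rather than of $W_\nfk$ on a large initial segment of $r$, the key is to relate $W_\nfk$ evaluated at $\gamma e_0$ to $W_\mfk$ for a suitable $\mfk \mid \nfk$ depending on $\gamma$.

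\textbf{Key steps.} First I would fix $\gamma = \begin{pmatrix}a&b\\c&d\end{pmatrix}$ with $c \mid \nfk$ and $\deg d < \deg\nfk$, and set $\mfk := \nfk/\gcd(c,\nfk) = \nfk_\gamma$ when $\nfk$ is square-free (so $\mfk \mid \nfk$). The idea is that the Atkin--Lehner involution $W_\mfk$ mediates between the cusp $[c:d]$ and the cusp at infinity: using the factorization of $W_\nfk = W_\mfk \cdot W_{\nfk/\mfk}$ valid in the square-free case (the involutions commute and multiply according to coprime divisors), I would rewrite $f|W_\nfk$ applied near the edge $\gamma e_0$ in terms of $f|W_\mfk$ applied at an edge of the form $\begin{pmatrix}\pi_\infty^{\deg\nfk - \deg\mfk}&u\\0&1\end{pmatrix}$. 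Concretely, I would produce an explicit $\gamma_0 \in \Gamma_0(\nfk)$ (of the type constructed in Lemma~\ref{lem: Key-Lem}\eqref{lem: Key-Lem2}) so that $\gamma_0\gamma$ factors through $w_\mfk$ times an upper-triangular matrix whose diagonal exponent is exactly $\deg\nfk - \deg\mfk$; the self-duality of $A$ and the Fourier expansion \eqref{eqn: Four-exp} then let me conclude that $f(\gamma e_0)$ is a linear combination of values $(f|W_\mfk)\begin{pmatrix}\pi_\infty^{\deg\nfk-\deg\mfk}&u\\0&1\end{pmatrix}$, all of which vanish by hypothesis \eqref{eqn: sqr-free}, provided $\deg\mfk \leq \deg\nfk - 2$.

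\textbf{Handling the boundary cases.} The constraint $\deg\mfk \leq \deg\nfk - 2$ excludes the divisors $\mfk = \nfk$ and the codimension-one divisors with $\deg\mfk = \deg\nfk - 1$; but since $f \in \Hcal_0(\nfk)$ is cuspidal, the edges $[\gamma e_0]$ with $\deg\nfk_\gamma = \nfk_\gamma \in \{0,1\}$ (equivalently $\deg\mfk \in \{\deg\nfk, \deg\nfk-1\}$ when $\nfk$ is square-free, though one must check the degree bookkeeping) lie in the ends of $\Gcal(\nfk)$ and $f$ vanishes there automatically by Remark~\ref{rem: Har}\eqref{rem: HarSuppGO}. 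Thus only divisors with $\deg\mfk \leq \deg\nfk-2$ contribute nontrivial edges, and these are exactly the ones covered by \eqref{eqn: sqr-free}. Combining, $f$ vanishes on all $\gamma e_0$, and Lemma~\ref{lem: UD-Lem} gives $f \equiv 0$.

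\textbf{Main obstacle.} The hard part will be the explicit matrix bookkeeping that ties the width $\nfk_\gamma$ to the correct divisor $\mfk$ and shows that the diagonal exponent landing in the $W_\mfk$-twist is precisely $\deg\nfk - \deg\mfk$, rather than something shifted by the quantities $\delta_\nfk(c,d)$ or $\epsilon_\nfk(c,d)$. The square-freeness of $\nfk$ should be what forces $\gcd(c^2,\nfk) = \gcd(c,\nfk)$, collapsing the width computation and making the Atkin--Lehner decomposition $W_\nfk = W_\mfk W_{\nfk/\mfk}$ into commuting coprime factors; verifying that this collapse makes the exponent come out exactly right — and that no stray $\delta$ or $\epsilon$ correction survives — is the delicate point I would expect to spend the most care on.
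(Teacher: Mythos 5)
Your overall strategy is exactly the paper's: normalize $\gamma=\left(\begin{smallmatrix}a&b\\ c&d\end{smallmatrix}\right)$ with $c\mid\nfk$, use square-freeness to split $W_\nfk$ into commuting coprime Atkin--Lehner factors, convert $(f|W_\nfk)(\gamma e_0)$ into a value of some $f|W_\mfk$ on a standard horocyclic edge covered by \eqref{eqn: sqr-free}, and conclude with Lemma~\ref{lem: UD-Lem}. But the ``degree bookkeeping'' you flag as the delicate point is precisely where your proposal, read literally, goes wrong: you set $\mfk:=\nfk/\gcd(c,\nfk)=\nfk_\gamma$, whereas the divisor for which the hypothesis must be invoked is the \emph{complementary} one, $\mfk:=\gcd(c,\nfk)=c$. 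The correct identity (the paper's computation) is: with $\mfk=c$, $\mfk'=\nfk/\mfk$ and $\alpha a\mfk'+\beta\mfk=1$, the matrix $\left(\begin{smallmatrix}\alpha\mfk'&\beta\\ -\nfk&a\mfk'\end{smallmatrix}\right)$ represents $W_{\mfk'}$ and carries $\gamma$ to $\left(\begin{smallmatrix}1&\alpha b\mfk'+\beta d\\ 0&\mfk'\det\gamma\end{smallmatrix}\right)$, so that $(f|W_\nfk)(\gamma e_0)=\bigl((f|W_\mfk)|W_{\mfk'}\bigr)(\gamma e_0)=(f|W_\mfk)\left(\begin{smallmatrix}\pi_\infty^{\deg\nfk-\deg\mfk}&u\\ 0&1\end{smallmatrix}\right)$; note that it is the involution attached to the divisor \emph{not} equal to the lower-left entry that moves $\gamma e_0$ into standard form, leaving the hypothesis to be applied to $W_c$.

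With your assignment the argument fails on the edges that matter most: for $\deg c\leq 1$ (e.g.\ $c=1$, width $\nfk_\gamma=\nfk$) one has $\deg\nfk_\gamma\geq\deg\nfk-1\geq 2$, so these edges are \emph{not} killed by cuspidality and must be controlled, yet your $\mfk=\nfk_\gamma$ has $\deg\mfk\geq\deg\nfk-1$ and is excluded from \eqref{eqn: sqr-free}; moreover there is no direct identity expressing $(f|W_\nfk)(\gamma e_0)$ through $f|W_{\nfk/c}$ at exponent $\deg c$. Your boundary-case paragraph silently uses the correct convention ($\deg\nfk_\gamma\leq 1$ iff $\deg\mfk\geq\deg\nfk-1$ only when $\mfk=\gcd(c,\nfk)$), so the two halves of your write-up use opposite identifications. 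Once $\mfk$ and $\nfk/\mfk$ are swapped in the key step, your argument closes and coincides with the paper's proof (which, incidentally, applies Lemma~\ref{lem: UD-Lem} directly to $f'=f|W_\nfk\in\Hcal_0(\nfk)$ to restrict to $\deg\nfk_\gamma\geq 2$, rather than invoking Remark~\ref{rem: Har} separately -- an equivalent way of handling the boundary divisors).
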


\begin{proof}
Given a coset $\Gamma_0(\nfk) \gamma \in \Gamma_0(\nfk) \backslash \Gamma$, we may assume that the representative
$\gamma$ is of the form $ \begin{pmatrix} a&b \\ \mfk&d\end{pmatrix}$ with $\mfk \mid \nfk$.
Let $\mfk' := \nfk/\mfk$. Since $\nfk$ is square-free, there exist $\alpha, \beta \in A$ with $\alpha a\mfk'+\beta \mfk = 1$.
For $f \in \Hcal_0(\nfk)$, one has
\begin{eqnarray}
(f|W_{\mfk'})(\gamma) &=& f\left(\begin{pmatrix} \alpha \mfk' & \beta \\
-\nfk & a \mfk' \end{pmatrix} \begin{pmatrix} a&b \\ \mfk&d\end{pmatrix}\right) \nonumber \\
&=& f\begin{pmatrix} 1 & \alpha b \mfk' + \beta d \\ 0 & \mfk'\det \gamma  \end{pmatrix}. \nonumber 
\end{eqnarray}

Suppose $f \in \Hcal_0(\nfk)$ satisfies \eqref{eqn: sqr-free}.
Let $f' := f|W_\nfk$. 
For $\gamma = \begin{pmatrix} a&b \\ \mfk&d\end{pmatrix} \in \Gamma$ with $\mfk\mid \nfk$ and $\deg \mfk \leq \deg \nfk-2$, we then have
$$
f'(\gamma) = \Big(\big(f|W_\mfk\big)|W_{\mfk'}\Big)(\gamma) = (f|W_\mfk)\begin{pmatrix}
\pi_\infty^{\deg \mfk'} & (\mfk'\det \gamma)^{-1}(\alpha b \mfk' + \beta d)\\ 0&1 
\end{pmatrix} = 0.
$$
Therefore $f'$ is identically zero by Lemma~\ref{lem: UD-Lem}, and so is $f$.
\end{proof}

Given $\nfk \in A_+$ and a prime factor $\pfk$ of $\nfk$, suppose $\pfk^2 \nmid \nfk$.
Let $f \in \Hcal_0^{\text{new}}(\nfk)$.
It is known that $f|(T_\pfk+W_\pfk) = 0$.
Thus for a square-free factor $\nfk_0$ of $\nfk$ which is coprime to $\nfk/\nfk_0$, one has
$$ f|W_{\nfk_0} = (-1)^{t(\nfk_0)}\cdot f|T_{\nfk_0},$$
where $t(\nfk_0)$ is the number of prime factors of $\nfk_0$.
Moreover, we have:

\begin{lem}\label{lem: 5.5}
Given $\nfk_0, \nfk \in A_+$ with $\nfk_0 \mid \nfk$, put $\nfk_0' := \nfk/\nfk_0$.
Suppose $\nfk_0$ is square-free and coprime to $\nfk_0'$.
For each $u \in K_\infty$, the following identity holds:
$$
(f|W_{\nfk_0})\begin{pmatrix} \pi_\infty^{\deg \nfk_0'} & u \\ 0 & 1\end{pmatrix}
= (-1)^{t(\nfk_0)} q^{-\deg \nfk_0' + 2} 
\sum_{\subfrac{\mfk \in A_+}{\deg \mfk + 2 \leq \deg \nfk_0'}}
c_{\nfk_0 \mfk}(f) \Psi(\mfk u).
$$
\end{lem}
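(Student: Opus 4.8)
The plan is to reduce the statement to a Fourier-coefficient computation for $f|T_{\nfk_0}$ and then exploit the relation between $W_{\nfk_0}$ and $T_{\nfk_0}$ recorded just before the lemma. Since $\nfk_0$ is square-free and coprime to $\nfk_0'=\nfk/\nfk_0$, every prime $\pfk\mid\nfk_0$ satisfies $\pfk^2\nmid\nfk$; multiplying the identities $f|(T_\pfk+W_\pfk)=0$ over the primes dividing $\nfk_0$ gives $f|W_{\nfk_0}=(-1)^{t(\nfk_0)}\,f|T_{\nfk_0}$. So after inserting the sign it suffices to prove
$$(f|T_{\nfk_0})\begin{pmatrix} \pi_\infty^{\deg \nfk_0'} & u \\ 0 & 1 \end{pmatrix} = q^{-\deg \nfk_0'+2} \sum_{\subfrac{\mfk \in A_+}{\deg \mfk + 2 \leq \deg \nfk_0'}} c_{\nfk_0 \mfk}(f)\, \Psi(\mfk u).$$
First I would simplify $T_{\nfk_0}$ itself: in the defining sum the index $\afk$ divides $\nfk_0$, hence divides $\nfk$, so the constraint $\gcd(\afk,\nfk)=1$ forces $\afk=1$ and $\dfk=\nfk_0$. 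Thus $T_{\nfk_0}$ collapses to the single coset sum
$$(f|T_{\nfk_0})\begin{pmatrix} \pi_\infty^{\deg \nfk_0'} & u \\ 0 & 1 \end{pmatrix} = \sum_{\deg b < \deg \nfk_0} f\begin{pmatrix} \pi_\infty^{\deg \nfk_0'} & u+b \\ 0 & \nfk_0 \end{pmatrix}.$$

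Next I would normalize each matrix into the standard shape $\begin{pmatrix}\pi_\infty^{\deg\nfk} & \ast\\ 0&1\end{pmatrix}$. Writing $\nfk_0=\pi_\infty^{-\deg\nfk_0}w$ with $w\in O_\infty^\times$, dividing by the scalar $\nfk_0$ and right-multiplying by $\begin{pmatrix} w&0\\0&1\end{pmatrix}\in\Ical_\infty$, and using that $f$ is a function on $E(\sT)=\GL_2(K_\infty)/K_\infty^\times\Ical_\infty$ together with $\deg\nfk=\deg\nfk_0+\deg\nfk_0'$, I get
$$f\begin{pmatrix} \pi_\infty^{\deg \nfk_0'} & u+b \\ 0 & \nfk_0 \end{pmatrix} = f\begin{pmatrix} \pi_\infty^{\deg \nfk} & (u+b)/\nfk_0 \\ 0 & 1 \end{pmatrix}.$$
Applying the Fourier expansion \eqref{eqn: Four-exp} with $r=\deg\nfk$, and recalling $c_0(f)=0$ for cuspidal $f$ by Remark~\ref{rem: cOcusp}, turns the right-hand side into $q^{-\deg\nfk+2}\sum_{\mfk}c_\mfk(f)\,\Psi(\mfk(u+b)/\nfk_0)$, summed over $\mfk\in A_+$ with $\deg\mfk+2\le\deg\nfk$.

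The heart of the argument is then the inner sum over $b$. After interchanging the two sums and expanding $\Psi(x)=\sum_{\varepsilon\in\FF_q^\times}\psi(\varepsilon x)$, I am reduced to evaluating $\sum_{\deg b<\deg\nfk_0}\psi(\varepsilon\mfk b/\nfk_0)$. As $b$ ranges over the degree-$<\deg\nfk_0$ representatives it ranges over $A/\nfk_0 A$; since $\psi$ is trivial on $A$, the map $\bar b\mapsto\psi(\varepsilon\mfk b/\nfk_0)$ is a well-defined additive character of $A/\nfk_0 A$, and by self-duality of $A$ it is trivial exactly when $\nfk_0\mid\mfk$. Orthogonality then gives $q^{\deg\nfk_0}$ when $\nfk_0\mid\mfk$ and $0$ otherwise, so only the terms $\mfk=\nfk_0\mfk'$ with $\mfk'\in A_+$ survive, each contributing $q^{\deg\nfk_0}\,\Psi(\mfk' u)$. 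The degree constraint $\deg\mfk+2\le\deg\nfk$ becomes $\deg\mfk'+2\le\deg\nfk_0'$, and collecting $q^{-\deg\nfk+2}\cdot q^{\deg\nfk_0}=q^{-\deg\nfk_0'+2}$ yields the claimed formula (after renaming $\mfk'$ to $\mfk$).

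I expect the main obstacle to be the character-sum evaluation together with the careful handling of the Iwahori cosets. One must verify that dividing by $\nfk_0$ and clearing the unit $w$ genuinely preserves the \emph{edge} of $\sT$, not merely its origin, and that orthogonality is applied to a genuinely well-defined character on $A/\nfk_0 A$ — which is exactly where the self-duality of $A$ and the triviality of $\psi$ on $A$ enter. The degree bookkeeping and the tracking of the three powers of $q$ are routine but must be carried out precisely to land on the stated normalization.
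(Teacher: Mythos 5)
Your proof is correct, and it shares the paper's opening move --- reducing $W_{\nfk_0}$ to $(-1)^{t(\nfk_0)}T_{\nfk_0}$ via the newform relation $f|(T_\pfk+W_\pfk)=0$ --- but it computes the Fourier coefficients of $f|T_{\nfk_0}$ by a genuinely different method. The paper never unfolds the Hecke operator: it simply applies the expansion \eqref{eqn: Four-exp} to the harmonic cochain $f|T_{\nfk_0}$ at $r=\deg\nfk_0'$ and then evaluates $c_\mfk(f|T_{\nfk_0}) = c_1(f|T_{\nfk_0}T_\mfk) = c_1(f|T_{\nfk_0\mfk}) = c_{\nfk_0\mfk}(f)$, using the identity $T_{\nfk_0}T_\mfk = T_{\nfk_0\mfk}$ (valid because $\nfk_0\mid\nfk$ and $\nfk_0$ is square-free, so the recursion for $T_{\pfk^{r+2}}$ degenerates) together with the pairing formula $c_1(g|T_\mfk)=c_\mfk(g)$. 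You instead collapse $T_{\nfk_0}$ to the single coset sum over $b$ with $\deg b<\deg\nfk_0$ (correctly, since $\gcd(\afk,\nfk)=1$ forces $\afk=1$), normalize the matrices into Iwahori-standard form, and kill the non-multiples of $\nfk_0$ by orthogonality of characters on $A/\nfk_0 A$; your bookkeeping of the powers of $q$ ($q^{-\deg\nfk+2}\cdot q^{\deg\nfk_0}=q^{-\deg\nfk_0'+2}$) and of the degree constraint is right. In effect you re-derive from scratch the standard relation between $c_\mfk(f|T_{\nfk_0})$ and $c_{\nfk_0\mfk}(f)$ that the paper imports abstractly; your route is longer and requires the careful edge-level normalization and well-definedness checks you flag, but it is self-contained and makes the role of the self-duality of $A$ explicit, whereas the paper's argument is shorter and leans on the Hecke-algebra formalism already set up in Section~\ref{subsec: Hec-Alg}.
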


\begin{proof}
The previous discussion tells us that
\begin{eqnarray}
(f|W_{\nfk_0})\begin{pmatrix} \pi_\infty^{\deg \nfk_0'} & u \\ 0&1\end{pmatrix}
&=& (-1)^{t(\nfk_0)} (f|T_{\nfk_0})\begin{pmatrix} \pi_\infty^{\deg \nfk_0'} & u \\ 0&1\end{pmatrix} \nonumber \\
&=& (-1)^{t(\nfk_0)} q^{-\deg \nfk_0'+2}
\sum_{\subfrac{\mfk \in A_+}{\deg \mfk +2\leq \deg \nfk_0'}} \!\!\! c_\mfk(f|T_{\nfk_0}) \, \Psi(\mfk u). \nonumber
\end{eqnarray}
Note that as $\nfk_0 \mid \nfk$, one has
$$\forall \mfk \in A_+, \quad T_{\nfk_0}T_\mfk = T_{\nfk_0 \mfk}.
$$
Thus for $\mfk \in A_+$ with $\deg \mfk+2 \leq \deg \nfk_0'$, we get
$$c_\mfk(f|T_{\nfk_0}) =  c_1(f|T_{\nfk_0}T_\mfk) = c_1(f|T_{\nfk_0 \mfk}) =  c_{\nfk_0\mfk}(f),$$
Therefore the proof is complete.
\end{proof}


When $\nfk$ is square-free, the previous two lemmas give us a smaller bound for $f \in \Hcal_0^{\text{new}}(\nfk)$ than Proposition~\ref{prop: Sbound.1}.

\begin{prop}\label{prop: SBound.newspace}
Given $\nfk \in A_+$, suppose $\nfk$ is square-free. Then $f \in \Hcal_0^{\text{\rm new}}(\nfk)$ is identically zero if $c_\mfk(f) = 0$ for every $\mfk \in A_+$ with $\deg \mfk \leq \deg \nfk -2$.
\end{prop}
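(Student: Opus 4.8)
The plan is to reduce the statement to the vanishing criterion of Lemma~\ref{lem: Key-Lem.3} and to evaluate the twisted values appearing there by means of Lemma~\ref{lem: 5.5}. Since $\nfk$ is square-free, every divisor $\mfk \mid \nfk$ is itself square-free and coprime to $\nfk/\mfk$, so both lemmas are applicable with $\nfk_0 = \mfk$. Accordingly, I would fix $f \in \Hcal_0^{\text{new}}(\nfk)$ with $c_\mfk(f) = 0$ for all $\mfk \in A_+$ of degree $\leq \deg \nfk - 2$, and set out to verify the hypothesis \eqref{eqn: sqr-free} of Lemma~\ref{lem: Key-Lem.3}; note that this criterion only requires information at divisors of degree $\leq \deg \nfk - 2$, which is exactly the range where $f$ is assumed to have vanishing Fourier coefficients.

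Concretely, I would take an arbitrary divisor $\mfk \mid \nfk$ with $\deg \mfk \leq \deg \nfk - 2$ and put $\mfk' := \nfk/\mfk$, so that $\deg \nfk - \deg \mfk = \deg \mfk'$. Applying Lemma~\ref{lem: 5.5} with $\nfk_0 = \mfk$ (which relies on $f$ being new, through the relation $f|W_\mfk = (-1)^{t(\mfk)} f|T_\mfk$) yields
$$(f|W_\mfk)\begin{pmatrix} \pi_\infty^{\deg \mfk'} & u \\ 0 & 1 \end{pmatrix} = (-1)^{t(\mfk)} q^{-\deg \mfk' + 2} \sum_{\substack{\tilde{\mfk} \in A_+ \\ \deg \tilde{\mfk} + 2 \leq \deg \mfk'}} c_{\mfk \tilde{\mfk}}(f)\, \Psi(\tilde{\mfk} u).$$
The decisive observation is a degree count: each index $\tilde{\mfk}$ occurring in the sum satisfies $\deg \tilde{\mfk} \leq \deg \mfk' - 2$, whence $\deg(\mfk \tilde{\mfk}) = \deg \mfk + \deg \tilde{\mfk} \leq \deg \mfk + \deg \mfk' - 2 = \deg \nfk - 2$. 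Therefore $c_{\mfk \tilde{\mfk}}(f) = 0$ for every term by hypothesis, and the entire sum vanishes, so the left-hand side is zero for all $u \in K_\infty$. This is precisely condition \eqref{eqn: sqr-free} for the chosen $\mfk$.

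Running over all divisors $\mfk \mid \nfk$ with $\deg \mfk \leq \deg \nfk - 2$, the hypothesis of Lemma~\ref{lem: Key-Lem.3} is met, and I would conclude that $f$ is identically zero. I do not expect a genuine obstacle here: the argument is a clean assembly of the two preceding lemmas, and the only point demanding care is that the truncation $\deg \tilde{\mfk} + 2 \leq \deg \mfk'$ supplied by Lemma~\ref{lem: 5.5} forces every Fourier coefficient $c_{\mfk \tilde{\mfk}}(f)$ appearing in the expansion to fall within the assumed vanishing range $\deg \leq \deg \nfk - 2$. Matching these index ranges is exactly what makes the newform hypothesis pay off, improving on the general cuspidal bound of Proposition~\ref{prop: Sbound.1}.
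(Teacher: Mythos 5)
Your proposal is correct and follows essentially the same route as the paper: apply Lemma~\ref{lem: 5.5} with $\nfk_0=\mfk$ for each divisor $\mfk\mid\nfk$ of degree at most $\deg\nfk-2$, observe that every coefficient $c_{\mfk\tilde{\mfk}}(f)$ in the resulting sum has index of degree at most $\deg\nfk-2$ and hence vanishes, and conclude via Lemma~\ref{lem: Key-Lem.3}. The only cosmetic difference is that the paper first disposes of the case $\deg\nfk<3$ via Remark~\ref{rem: smalldeg}, where the statement is vacuous anyway.
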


\begin{proof}
It suffices to assume that $\deg \nfk \geq 3$ by Remark~\ref{rem: smalldeg}.
Given $\nfk_0 \in A_+$ with $\nfk_0 \mid \nfk$ and $\deg \nfk_0 \leq \deg \nfk -2$, let $\nfk_0' := \nfk/\nfk_0$. For $u \in K_\infty$, by Lemma~\ref{lem: 5.5} one has
$$
(f|W_{\nfk_0})\begin{pmatrix} \pi_\infty^{\deg \nfk_0'} & u \\ 0&1\end{pmatrix}\ =\ 
(-1)^{t(\nfk_0)} q^{-\deg \nfk_0'+2}
\sum_{\subfrac{\mfk \in A_+}{\deg \mfk +2\leq \deg \nfk_0'}} \!\!\! c_{\nfk_0\mfk}(f) \, \Psi(\mfk u) \ =\ 0.
$$
The result then follows from Lemma~\ref{lem: Key-Lem.3}.
\end{proof}


When $\nfk$ is not square-free, we may also go a bit further in the following case:
\begin{lem}\label{lem: p2q}
Let $\nfk = \pfk^2 \qfk$ for two primes $\pfk, \qfk \in A_+$ with $\deg \qfk = 1$.
Given $f \in \Hcal_0^{\text{\rm new}}(\nfk)$, we have that $f$ is identically zero if $c_\mfk(f) = 0$ for every $\mfk \in A_+$ with $\deg \mfk \leq \deg \nfk -2$.
\end{lem}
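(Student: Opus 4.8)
The plan is to mimic the square-free argument of Proposition~\ref{prop: SBound.newspace}, using the involution $W_\nfk$ together with the geometric reduction of Lemma~\ref{lem: UD-Lem}, while exploiting the hypothesis $\deg\qfk=1$ to cut down the cosets that actually intervene. Writing $\nfk=\pfk^2\qfk$, I would first record the widths $\nfk_\gamma=\nfk/\gcd(c^2,\nfk)$ of the representatives $\gamma=\begin{pmatrix}a&b\\c&d\end{pmatrix}$ with $c\mid\nfk$: one checks that $\deg\nfk_\gamma\geq 2$ only for $c=1$ (width $\nfk$) and $c=\qfk$ (width $\pfk^2$, of degree $2\deg\pfk=\deg\nfk-1$), whereas $c=\pfk,\pfk^2$ give width $\qfk$ of degree $\deg\qfk=1$, and $c=\pfk\qfk,\pfk^2\qfk$ give width $1$. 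It is exactly here that $\deg\qfk=1$ enters: it forces the two ``non-square-free'' cosets $c=\pfk,\pfk^2$ out of the range relevant to Lemma~\ref{lem: UD-Lem}. Setting $f':=f|W_\nfk\in\Hcal_0(\nfk)$, which vanishes identically if and only if $f$ does, it therefore suffices to prove $f'(\gamma e_0)=0$ for representatives with $c\in\{1,\qfk\}$.

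For the cosets with $c=1$ I would argue directly: taking $\mfk=1$, $\mfk'=\nfk$ in the computation carried out in the proof of Lemma~\ref{lem: Key-Lem.3} gives $f'(\gamma e_0)=(f|W_\nfk)(\gamma e_0)=f\begin{pmatrix}\pi_\infty^{\deg\nfk}&u\\0&1\end{pmatrix}$ for a suitable $u\in K_\infty$. Since $f$ is cuspidal, $c_0(f)=0$, and the Fourier expansion \eqref{eqn: Four-exp} writes this value as a combination of $c_\mfk(f)$ with $\deg\mfk\leq\deg\nfk-2$, all of which vanish by hypothesis.

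For the cosets with $c=\qfk$ I would use the factorization $W_\nfk=W_\qfk W_{\pfk^2}$ into coprime Atkin--Lehner involutions (both genuine involutions, since $\gcd(\qfk,\pfk^2)=1$; note $W_\pfk$ alone is not available). Because $\det\gamma\in\FF_q^\times$ forces $\gcd(a,\qfk)=1$, one has $\gcd(a\pfk^2,\qfk)=1$, so the computation in the proof of Lemma~\ref{lem: Key-Lem.3} applies verbatim with $\mfk=\qfk$, $\mfk'=\pfk^2$ and yields $f'(\gamma e_0)=\bigl((f|W_\qfk)|W_{\pfk^2}\bigr)(\gamma e_0)=(f|W_\qfk)\begin{pmatrix}\pi_\infty^{2\deg\pfk}&u\\0&1\end{pmatrix}$ for some $u$, where $2\deg\pfk=\deg\nfk-1$. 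Now $\qfk$ is square-free and coprime to $\pfk^2=\nfk/\qfk$, so Lemma~\ref{lem: 5.5} (with $\nfk_0=\qfk$, $\nfk_0'=\pfk^2$) rewrites this as a nonzero constant times $\sum_{\deg\mfk+2\leq 2\deg\pfk}c_{\qfk\mfk}(f)\Psi(\mfk u)$. Every index satisfies $\deg(\qfk\mfk)=\deg\mfk+1\leq 2\deg\pfk-1=\deg\nfk-2$, so each $c_{\qfk\mfk}(f)$ vanishes by hypothesis and $f'(\gamma e_0)=0$.

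Having shown $f'(\gamma e_0)=0$ on all representatives with $\deg\nfk_\gamma\geq 2$, Lemma~\ref{lem: UD-Lem} yields $f'=f|W_\nfk=0$, whence $f=0$; one may assume $\deg\nfk\geq 3$ by Remark~\ref{rem: smalldeg}, so that $2\deg\pfk=\deg\nfk-1\geq 2$ and the above Fourier sums are well-posed. The step I expect to demand the most care is the bookkeeping in the $c=\qfk$ case: one must verify that the $W_{\pfk^2}$-untwist legitimately lowers the relevant power from $\deg\nfk$ to $\deg\nfk-1$ (this is precisely what keeps the Fourier indices within the hypothesised range $\deg\leq\deg\nfk-2$, so that, in contrast to a naive use of $W_\nfk$, no top coefficient of degree $\deg\nfk-1$ is needed), and that the condition $\gcd(a\pfk^2,\qfk)=1$ required for that untwist holds for every $c=\qfk$ representative --- a condition that would fail for the $c=\pfk,\pfk^2$ cosets and is avoided only because $\deg\qfk=1$ removes them from consideration.
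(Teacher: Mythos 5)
Your proposal is correct and follows essentially the same route as the paper: reduce via Lemma~\ref{lem: UD-Lem} to the cosets of width degree $\geq 2$ (which are exactly $c\in\{1,\qfk\}$ because $\deg\qfk=1$), untwist by the coprime Atkin--Lehner factor as in the proof of Lemma~\ref{lem: Key-Lem.3}, and conclude with Lemma~\ref{lem: 5.5} and the Fourier expansion. The only cosmetic difference is that the paper first invokes Corollary~\ref{cor: SBound.3} to dispose of the case $\pfk=\qfk$, which your factorization $W_\nfk=W_\qfk W_{\pfk^2}$ tacitly assumes away by asserting $\gcd(\qfk,\pfk^2)=1$.
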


\begin{proof}
From Corollary~\ref{cor: SBound.3}, we may assume that $\pfk$ and $\qfk$ are distinct.
Let $f \in \Hcal_0^{\text{new}}(\nfk)$ with $c_\mfk(f) = 0$ for every $\mfk \in A_+$ with $\deg \mfk \leq \deg \nfk -2$.
By Lemma~\ref{lem: UD-Lem}, it suffices to show that $(f|W_\nfk)(\gamma) = 0$
for $\gamma \in \Gamma_0(\nfk)\backslash \Gamma$ with $\deg \nfk_\gamma \geq 2$.
We may take $\gamma$ to be of the form
$$
\gamma = \begin{pmatrix} a& b \\ \nfk_0 & d\end{pmatrix}
\quad \text{ with $\nfk_0 \mid \nfk$ and $\deg \nfk_\gamma \geq 2$.}
$$
Then $\nfk_0 = 1$ or $\qfk$.
Put $\nfk_0' := \nfk/\nfk_0$. Then $\gcd(\nfk_0,\nfk_0') = 1$.
Applying the argument in Lemma~\ref{lem: Key-Lem.3}, one has
$$
(f|W_\nfk)(\gamma)
=(f|W_{\nfk_0})\begin{pmatrix}\pi_\infty^{\deg \nfk_0'} & u \\ 0&1\end{pmatrix} \quad
\text{ for some $u \in K_\infty$.}
$$
Since $c_\mfk(f) = 0$ for every $\mfk \in A_+$ with $\deg \mfk \leq \deg \nfk - 2$, Lemma~\ref{lem: 5.5} implies
$$
(f|W_\mfk)\begin{pmatrix}\pi_\infty^{\deg \mfk'} & u \\ 0&1\end{pmatrix} = 0.
$$
Therefore the result holds.
\end{proof}

Note that the subspace $\Hcal_0^{\text{new}}(\nfk)$ is invariant by $\TT_0(\nfk)$.
Let $\TT_0^{\text{new}}(\nfk)$ be the image of $\TT_0(\nfk)$ in $\End_\CC\big(\Hcal_0^{\text{new}}(\nfk)\big)$ under the restriction map.
Then the pairing $\langle \cdot,\cdot\rangle$ restricted to $\Hcal_0^{\text{new}}(\nfk) \times \TT_0^{\text{new}}(\nfk)$ is still perfect.
Consequently, we obtain:

\begin{cor}\label{cor: Hec-Alg.newspace}
Given $\nfk \in A_+$, suppose $\nfk$ is either square-free or $\nfk = \pfk^2 \qfk$ for two primes $\pfk,\qfk \in A_+$ with $\deg \qfk = 1$.
The Hecke algebra $\TT_0^{\text{\rm new}}(\nfk)$ is spanned as a $\CC$-vector space by $T_\mfk$ for $\mfk \in A_+$ with $\deg \mfk \leq \deg \nfk-2$.
\end{cor}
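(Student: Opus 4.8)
The plan is to deduce the corollary from the vanishing results already proved for the ``new'' subspace---Proposition~\ref{prop: SBound.newspace} in the square-free case and Lemma~\ref{lem: p2q} in the case $\nfk = \pfk^2\qfk$ with $\deg\qfk = 1$---by the same duality argument used in Corollary~\ref{cor: Hec-gen}, now applied to the perfect pairing $\langle\cdot,\cdot\rangle$ restricted to $\Hcal_0^{\text{new}}(\nfk) \times \TT_0^{\text{new}}(\nfk)$.

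First I would record the two ingredients that drive the argument: the identity $c_\mfk(f) = c_1(f|T_\mfk) = \langle f, T_\mfk\rangle$ for every $f \in \Hcal_0(\nfk)$ and $\mfk \in A_+$, and the fact, noted just before the statement, that the restricted pairing on $\Hcal_0^{\text{new}}(\nfk) \times \TT_0^{\text{new}}(\nfk)$ is still perfect. Let $\TT'$ denote the $\CC$-span inside $\TT_0^{\text{new}}(\nfk)$ of the images of $T_\mfk$ for all $\mfk \in A_+$ with $\deg\mfk \leq \deg\nfk - 2$; these images lie in $\TT_0^{\text{new}}(\nfk)$ because that algebra is by definition the image of $\TT_0(\nfk)$, which is generated by the operators $T_\mfk$.

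The core step is then to consider the linear map $\Hcal_0^{\text{new}}(\nfk) \to (\TT')^\vee$ sending $f$ to $T \mapsto \langle f, T\rangle$. Its kernel consists exactly of those $f \in \Hcal_0^{\text{new}}(\nfk)$ with $c_\mfk(f) = 0$ for all $\mfk$ with $\deg\mfk \leq \deg\nfk - 2$; by Proposition~\ref{prop: SBound.newspace} or Lemma~\ref{lem: p2q}, according to the shape of $\nfk$, this forces $f = 0$, so the map is injective. Hence $\dim_\CC \Hcal_0^{\text{new}}(\nfk) \leq \dim_\CC \TT'$, while perfectness of $\langle\cdot,\cdot\rangle$ gives $\dim_\CC \TT_0^{\text{new}}(\nfk) = \dim_\CC \Hcal_0^{\text{new}}(\nfk)$. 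Since $\TT' \subseteq \TT_0^{\text{new}}(\nfk)$, all these dimensions must coincide, forcing $\TT' = \TT_0^{\text{new}}(\nfk)$, which is the assertion.

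I do not expect a genuine obstacle, since all the arithmetic content has already been absorbed into the preceding vanishing statements; the argument here is a formal transcription of the proof of Corollary~\ref{cor: Hec-gen}. The only points requiring a line of care are confirming that $\Hcal_0^{\text{new}}(\nfk)$ is stable under the relevant $T_\mfk$, so that their restrictions genuinely lie in $\TT_0^{\text{new}}(\nfk)$, and that the restricted pairing remains perfect---both of which are recorded immediately before the statement.
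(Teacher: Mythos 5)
Your proposal is correct and matches the paper's intended argument: the paper derives this corollary directly from the perfectness of the pairing restricted to $\Hcal_0^{\text{new}}(\nfk) \times \TT_0^{\text{new}}(\nfk)$ together with Proposition~\ref{prop: SBound.newspace} and Lemma~\ref{lem: p2q}, via the same duality argument as in Corollary~\ref{cor: Hec-gen}. Nothing is missing.
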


\subsection{Isogeny between elliptic curves}\label{subsec: Ell}

Let $E$ be an elliptic curve over $K$ which has split multiplicative reduction at the place $\infty$.
For each prime $\pfk \in A_+$, let
\begin{eqnarray}\label{eqn: ap(E)}
&& a_\pfk(E) := \begin{cases}
|\pfk|_\infty + 1 - \#\overline{E}(\FF_\pfk) & \text{ if $E$ has good reduction at $\pfk$,} \\
1 & \text{ if $E$ has split multiplicative reduction at $\pfk$,} \\
-1 & \text{ if $E$ has non-split multiplicative reduction at $\pfk$,} \\
0 & \text{ if $E$ has additive reduction at $\pfk$.}
\end{cases}
\end{eqnarray}
Here $\FF_\pfk := A/\pfk$ and $\overline{E}$ denotes the reduction of $E$ at $\pfk$.
Let $\nfk\, \infty$ be the conductor of $E$ for some $\nfk \in A_+$.
From the work of Weil, Jacquet-Langlands, Grothendieck, Deligne, Drinfeld and Zarhin, there exists a unique $f_E \in \Hcal_0^{\text{new}}(\nfk)$ such that 
\begin{eqnarray}\label{eqn: Modularity}
\begin{cases}
    c_1(f_E) = 1; \\
    f_E|T_\mfk = c_\mfk(f_E)\, f_E \text{ for every $\mfk \in A_+$;} \\
    c_\pfk(f_E) = a_\pfk(E) \text{ for every prime $\pfk \in A_+$.}
\end{cases}
\end{eqnarray}
Moreover $f_E$ only depends on the $K$-isogeny class of $E$ (\cite{G-R}). Using our Sturm-type bound, we are able to determine effectively when two such given elliptic curves over $K$ are isogenous.



\begin{proof}[Proof of Corollary~\ref{cor: 0.6}]
We only have to prove the converse statement. Let $f_{E_1}$, $f_{E_2}$ be the harmonic cochains in $\Hcal_0^{\text{new}}(\nfk)$ corresponding to $E_1$, $E_2$ respectively, and such that $a_\pfk (E_1) = a_\pfk(E_2)$ for any prime $\pfk$ as in the statement of the corollary. Then \eqref{eqn: Modularity} shows that $c_\mfk(f_{E_1}) = c_\mfk(f_{E_2})$ for every $\mfk \in A_+$ with $\deg \mfk \leq \deg \nfk - 2$ if $\nfk$ is either a prime power or square-free or $\nfk = \pfk^2 \qfk$ for primes $\pfk,\qfk \in A_+$ with $\deg \qfk =1$, and $\deg \mfk \leq \deg \nfk - 2 + \ell(\nfk)$ otherwise.
By Corollary~\ref{cor: SBound.3}, Proposition \ref{prop: SBound.newspace}, Lemma \ref{lem: p2q} and Proposition \ref{prop: Sbound.1} according to the several cases, we get $f_{E_1} = f_{E_2}$, therefore $E_1$ and $E_2$ are isogenous over $K$.
\end{proof}

\section{Sturm-type bound for Drinfeld modular forms}

In this section, we study an analogous problem for Drinfeld modular forms. 

\subsection{Drinfeld modular forms}\label{sec: SBoundD}

Here we recall the definition of Drinfeld modular forms and the basic properties to be used. For further details we refer to \cite[V.3]{Gek3} and \cite[Section 2]{G-R}.

Let $\CC_\infty$ be the completion of a chosen algebraic closure of $K_\infty$. The \emph{Drinfeld half plane} is $\Omega := \CC_\infty - K_\infty$, which has a rigid analytic structure and is equipped with a left action of $\GL_2(K_\infty)$ via  fractional linear transformations.
Given non-negative integers $k$ and $m$ with $0\leq m \leq q-2$, for a rigid holomorphic function $f: \Omega \rightarrow \CC_\infty$ we set
$$\quad
\forall \gamma = \begin{pmatrix}a&b \\ c&d\end{pmatrix} \in \GL_2(K_\infty),\ \forall z \in \Omega,\quad (f\big|_{k,m} [\gamma])(z) := (\det \gamma)^m (cz+d)^{-k}f\left(\frac{az+b}{cz+d}\right).$$

\begin{defn}\label{defn: D-module}
Let $\nfk \in A_+$.
A \emph{Drinfeld modular form of weight $k$ and type $m$ for $\Gamma_0(\nfk)$} is a rigid holomorphic function $f: \Omega \rightarrow \CC_\infty$ satisfying:
\begin{enumerate}
    \item\label{defn: fmd1}
    for all $\gamma \in \Gamma_0(\nfk)$, $f\big|_{k,m}[\gamma] = f ;
    $
    \item\label{defn: fmd2} $f$ is holomorphic at all cusps of $\Gamma_0(\nfk)$.
\end{enumerate}
We denote by $M_{k,m}(\nfk)$ the $\CC_\infty$-vector space of Drinfeld modular forms of weight $k$ and type~$m$ for $\Gamma_0(\nfk)$.
\end{defn}

To state condition \eqref{defn: fmd2} more precisely, we recall the $t$-expansions of Drinfeld modular forms at the cusps of $\Gamma_0(\nfk)$ as follows.
Let $t$ be given by $$\forall z \in \Omega,\quad t(z) := \sum_{a \in A} \frac{1}{z-a},$$
which is a holomorphic function on $\Omega$ satisfying $t(z+a) = t(z)$ for every $a \in A$.
Then $t(z)$ is a uniformizer at the cusp infinity.
Given $f \in M_{k,m}(\nfk)$,
condition \eqref{defn: fmd1} implies that 
$f(z+a) = f(z)$ for every $a \in A$. Thus $f$ can be written for any $z\in\Omega$ with $|t(z)|_\infty$ small enough, as
$$f(z) = \sum_{n \in \ZZ}a_n(f) \ t^n(z),$$
where $\{a_n(f)\in \CC_\infty \mid n \in \ZZ\}$ is uniquely determined by $f$. To shorten notation, we will omit the condition that $|t(z)|_\infty$ is small enough in what follows.
In general, for every $\gamma \in \Gamma$, one has
$$\forall a \in A, \quad (f|_{k,m}[\gamma])(z+\nfk_{\gamma} a) = (f|_{k,m}[\gamma])(z), $$
where $\nfk_\gamma$ is the width of $\gamma$, introduced after Remark~\ref{rem: Quo-G}.
Thus we may write
$$\forall z \in \Omega,\quad (f|_{k,m}[\gamma])(z) = \sum_{n \in \ZZ}a_n^\gamma(f) \ t\left(\frac{z}{\nfk_\gamma}\right) .$$
Condition \ref{defn: fmd2} says that $a_n^\gamma(f) = 0$ for every $n <0$ and $\gamma \in \Gamma$.

\begin{rem}\label{rem: Coefficients}
In fact, condition \eqref{defn: fmd1} for the matrix $\gamma = \begin{pmatrix}\varepsilon & 0 \\ 0&\varepsilon\end{pmatrix}$ with $\varepsilon \in \FF_q^\times$ tells that $M_{k,m}(\nfk) = 0$ unless $k \equiv 2m \bmod q-1$.
Moreover, since $t(\varepsilon z) = \varepsilon^{-1} t(z)$ for any $\varepsilon \in \FF_q^\times$,
choosing $\gamma = \begin{pmatrix} \varepsilon & 0 \\ 0&1\end{pmatrix}$ in \eqref{defn: fmd1} gives that
$$ a_n(f) = 0 \quad \text{ unless $n = m + (q-1)j$ with $j \in \ZZ_{\geq 0}$.}$$
As a consequence, we put $b_j(f):= a_{m+(q-1)j}(f)$ for $j \in \ZZ_{\geq 0}$ so that the $t$-expansion of $f$ is
$$f(z) = \sum_{j=0}^{\infty} b_j(f)\ t^{m+(q-1)j}(z).$$
\end{rem}

For $\ell \geq 0$, we are interested in 
$$M_{k,m}^{(\ell)}(\nfk) := \big\{f \in M_{k,m}(\nfk)\ \big|\ \forall \gamma\in\Gamma, \forall n< \ell,\ a_n^{\gamma}(f) = 0 \big\}.$$
It is called the space of \emph{$\ell$-cuspidal Drinfeld modular forms of weight $k$ and type $m$ for $\Gamma_0(\nfk)$}.

\subsection{Sturm-type bound for Drinfeld modular forms}\label{sec: SBoundD2}
The following is obtained by an argument similar to proofs of the Sturm bound for classical modular forms.

\begin{thm}\label{thm: SBound.D}
Given $\nfk \in A_+$, let $\kappa(\nfk) := [\Gamma: \Gamma_0(\nfk)]$. Then $f \in M_{k,m}^{(\ell)}(\Gamma_0(\nfk))$ is identically zero if
$$b_j(f) = 0 \quad \text{ for every }\ 0\leq j \leq \kappa(\nfk) \cdot \left(\frac{k}{q^2-1}-\frac{\ell}{(q-1)|\nfk|_\infty}\right) + \frac{\ell - m|\nfk|_\infty}{(q-1)|\nfk|_\infty}.$$
\end{thm}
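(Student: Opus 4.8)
The plan is to imitate the classical proof of Theorem~\ref{thm: classical}, with Gekeler's valence formula for $\GL_2(A)$ playing the role of the valence formula for $\SL_2(\ZZ)$. The starting point I would recall is that formula: for a nonzero Drinfeld modular form $g$ of weight $k'$ for $\Gamma = \GL_2(A)$,
\[
\frac{\ord_\infty(g)}{q-1} + \sum_{z \in \Gamma\backslash\Omega}\frac{\ord_z(g)}{|\bar\Gamma_z|} = \frac{k'}{q^2-1},
\]
where $\ord_\infty$ denotes the order in the uniformizer $t$ (normalized so that $\ord_\infty(\Delta) = q-1$ for the discriminant $\Delta$ of weight $q^2-1$) and $\bar\Gamma_z$ is the image in $\PGL_2$ of the stabilizer of $z$ (cf.\ \cite{Gek5}). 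Since the elliptic terms are non-negative, this yields the only consequence I shall use, the inequality $\ord_\infty(g) \le k'/(q+1)$.

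Given $f \in M_{k,m}^{(\ell)}(\Gamma_0(\nfk))$, I would pass to the norm
\[
g := \prod_{\gamma \in \Gamma_0(\nfk)\backslash\Gamma} f\big|_{k,m}[\gamma].
\]
Because $f$ is $\Gamma_0(\nfk)$-invariant each factor depends only on the coset of $\gamma$, and right multiplication by $\Gamma$ merely permutes the cosets, so $g$ is a Drinfeld modular form for $\GL_2(A)$ of weight $k\kappa(\nfk)$. It is holomorphic at the cusp because every factor is, and $g$ vanishes identically precisely when $f$ does, since the factor indexed by the identity coset is $f$ itself and the ring of rigid holomorphic functions on the connected space $\Omega$ is an integral domain. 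Thus it suffices to show that the hypothesis forces $g\equiv 0$.

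The heart of the argument is a lower bound for $\ord_\infty(g) = \sum_\gamma \ord_\infty\!\big(f|_{k,m}[\gamma]\big)$, where each summand is the order in $t = t(z)$. For the identity coset (the unique one lying in the cusp $[0:1]$, of width $\nfk_\gamma = 1$) the factor is $f$, whose expansion $\sum_j b_j(f)\,t^{m+(q-1)j}$ has, under the hypothesis $b_j(f)=0$ for $0\le j\le J$, order at least $m + (q-1)(J+1)$. For every other coset $f|_{k,m}[\gamma]$ is expanded in the variable $t(z/\nfk_\gamma)$, in which its order is $\ge \ell$ by $\ell$-cuspidality; using the relation $t(z)\sim t(z/\nfk_\gamma)^{|\nfk_\gamma|_\infty}$ near the cusp --- which follows from the functional equation $1/t(z) = \phi_{\nfk_\gamma}\!\big(1/t(z/\nfk_\gamma)\big)$ for the Carlitz module, of degree $|\nfk_\gamma|_\infty$ --- I get $\ord_\infty\!\big(f|_{k,m}[\gamma]\big) = |\nfk_\gamma|_\infty^{-1}\,\ord_{t(z/\nfk_\gamma)}\!\big(f|_{k,m}[\gamma]\big)\ge \ell/|\nfk_\gamma|_\infty$. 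Since each width $\nfk_\gamma$ divides $\nfk$, one has $|\nfk_\gamma|_\infty \le |\nfk|_\infty$, so summing over the $\kappa(\nfk)-1$ non-identity cosets gives
\[
\ord_\infty(g) \;\ge\; m + (q-1)(J+1) + \frac{\ell\,(\kappa(\nfk)-1)}{|\nfk|_\infty}.
\]

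Finally I would combine this with $\ord_\infty(g) \le k\kappa(\nfk)/(q+1)$. A direct rearrangement --- using $k\kappa(\nfk)/(q+1) = (q-1)\,k\kappa(\nfk)/(q^2-1)$ and the stated value of $J$ --- shows that $J$ is exactly the threshold at which the displayed lower bound exceeds $k\kappa(\nfk)/(q+1)$; hence the hypothesis forces $\ord_\infty(g) > k\kappa(\nfk)/(q+1)$, so $g\equiv 0$ and $f\equiv 0$. I expect the main obstacle to be the bookkeeping at the cusps: pinning down the order-conversion $\ord_\infty(f|_{k,m}[\gamma]) = |\nfk_\gamma|_\infty^{-1}\ord_{t(z/\nfk_\gamma)}(f|_{k,m}[\gamma])$ and isolating the cusp $[0:1]$, since these produce the correction terms $-\ell/((q-1)|\nfk|_\infty)$ and $(\ell - m|\nfk|_\infty)/((q-1)|\nfk|_\infty)$, whereas the leading term $\kappa(\nfk)\,k/(q^2-1)$ falls out of the valence formula exactly as in the classical case. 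The crude estimate $|\nfk_\gamma|_\infty\le|\nfk|_\infty$ is what renders the correction clean; it is sharp at prime levels, and a more careful evaluation of $\sum_\gamma |\nfk_\gamma|_\infty^{-1}$ would in fact improve the bound for higher prime powers.
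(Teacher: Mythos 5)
Your proposal is correct and follows essentially the same route as the paper: form the norm $\prod_{\gamma}f|_{k,m}[\gamma]$ to reduce to level one, bound its order at infinity from below using the vanishing of the $b_j(f)$ at the identity coset and $\ell$-cuspidality (with the width-to-$t$ order conversion) at the others, and conclude via Gekeler's valence formula, which is exactly how the paper argues (it phrases the order bookkeeping through $t_{\nfk}$-expansions rather than fractional orders in $t$, but the computation is the same). Your closing remark that a sharper evaluation of $\sum_\gamma |\nfk_\gamma|_\infty^{-1}$ would improve the bound is a nice observation but does not change the argument.
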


\begin{rem}
Using Proposition~4.3 of \cite{C} for $0 \leq m \leq q-2$, we have:
\[
\dim M_{k,m}(1) = 1+ \left\lfloor \frac{k}{q^2-1} - \frac{m}{q-1} \right\rfloor.
\]
Thus the bound of the theorem is sharp for $\nfk = 1$.
\end{rem}

\begin{proof}
Suppose that $\nfk =1$. For $f \in M_{k,m}(1)$, let $\ord_t(f)$ denote the order of vanishing of $f$ with respect to the uniformizer $t$. Suppose that $f \in  M_{k,m}^{(\ell)}(1)$ with $b_j(f)=0$ for every $j \leq \frac{k}{q^2-1} - \frac{m}{q-1}$. Then we have
\[
f(z) = \sum_{j > \frac{k-(q+1)m}{q^2-1}} b_j(f)\ t^{m+(q-1)j}(z)
\]
hence $\ord_t(f)> \frac{k}{q+1}$. But according to Gekeler's valence formula for Drinfeld modular forms for $\Gamma$ (\cite[(5.14)]{Gek5}), any nonzero $g \in M_{k,m}(1)$ satisfies
\[
\ord_t(g) \leq \frac{k}{q+1}.
\]
Consequently, $f$ is identically zero.


\smallskip

Suppose that $\deg \nfk > 1$. Let $\kappa = \kappa(\nfk)$ and $\left(\begin{smallmatrix}1&0\\0&1\end{smallmatrix}\right)= \gamma_1 ,\gamma_2,...,\gamma_\kappa$ be representatives of the right cosets of $\Gamma_0(\nfk)$ in $\Gamma$.
Given $f \in M_{k,m}^{(\ell)}(\nfk)$,
put 
$$\tilde{f} := \prod_{i=1}^\kappa f\big|_{k,m}[\gamma_i].$$
Then $\tilde{f}$ is a Drinfeld modular form of weight $\kappa \cdot k$ and type $\tilde{m}$ for $\Gamma$, where
$0\leq \tilde{m} \leq q-2$ is such that $\tilde{m} \equiv \kappa \cdot m \bmod (q-1)$.
Considering the $t$-expansions of $\tilde{f}$ and $(f_{\mid k,m}[\gamma_i])_{2 \leq i \leq \kappa}$ we have
\begin{eqnarray}\label{eqn: expansion}
\tilde{f}(z) &=& \sum_{j=0}^\infty b_j(\tilde{f})\ t^{\tilde{m}+(q-1)j}(z) \nonumber \\ 
&=& \left(\sum_{j = 0}^\infty b_j(f)\ t^{m+(q-1)j}(z)\right) \cdot  \prod_{i=2}^\kappa \left(\sum_{n=0}^\infty a_n^{\gamma_i}(f)\  t^n\left((\nfk/\nfk_{\gamma_i})\frac{z}{\nfk}\right)\right).
\end{eqnarray}
Note that for $0 \neq a \in A$ and $z \in \Omega$ with $|t(z)|_\infty$ small enough, one has
$$t(az) = \sum_{i=0}^\infty c_i \; t^{|a|_\infty + i}(z), \quad \text{ where $c_i \in \CC_\infty$ for $i \geq 0$.}$$
Since $f$ is $\ell$-cuspidal, one has $a_n^{\gamma_i}(f) = 0$ for $n \leq \ell$ and $2 \leq i \leq \kappa$. 
Now suppose 
$$b_j(f) = 0 \quad \text{ for } \ 
j \leq \kappa \cdot \left(\frac{k}{q^2-1}-\frac{\ell}{(q-1)|\nfk|_\infty}\right) + \frac{\ell - m|\nfk|_\infty}{(q-1)|\nfk|_\infty}.$$
Let $t_\nfk(z) := t(z/\nfk)$ for $z \in \Omega$.
Expressing as $t_\nfk$-expansions on both sides of \eqref{eqn: expansion},
we then obtain that $b_j(\tilde{f}) = 0$ for $j \leq k\kappa/(q^2-1) - \tilde{m}/(q-1)$. From the case $\nfk =1$ proved previously, we get $\tilde{f}$ is identically zero. Since the ring of rigid analytic functions on $\Omega$ is an integral domain, $f$ is identically zero.
\end{proof}

\begin{rem}
For each prime $\pfk \in A_+$, the Hecke operator $T_\pfk$ on $M_{k,m}(\nfk)$ is given by (following \cite[Section 4.3]{Arm1}):
$$\forall z \in \Omega,\quad (f|_k T_\pfk)(z) :=  \pfk^{-1} \sum_{\subfrac{u \in A}{\deg u < \deg \pfk}}f\left(\frac{z+u}{\pfk}\right) + \mu_\nfk(\pfk)\cdot \pfk^{k-1} f(\pfk z).$$
Here $\mu_\nfk(\pfk) = 1$ if $\pfk \nmid \nfk$ and $0$ otherwise. 
In general, for $\mfk \in A_+$ written as $\mfk = \pfk_1^{r_1} \cdots \pfk_t^{r_t}$ with distinct primes $\pfk_1,\ldots,\pfk_t$, put
$$T_\mfk := \prod_{i=1}^t T_{\pfk_i}^{r_i}  \quad\in \End_{\CC_\infty}\big(M_{k,m}^{(\ell)}(\nfk)\big).$$
Let $\mathbf{T}^{(\ell)}_{k,m}(\nfk)$ be the $\CC_\infty$-algebra generated by $(T_\mfk)_{\mfk \in A_+}$. The first coefficient $b_1$ provides the following pairing:
$$
\begin{tabular}{ccccc}
$M_{k,m}^{(\ell)}(\nfk)$ & $\times$ & $\mathbf{T}_{k,m}^{(\ell)}(\nfk)$ & $\longrightarrow$ & $\CC_\infty$ \\
(\ $f$ &,& $T_\mfk$\ ) & $\longmapsto$ & $b_1(f|_k T_\mfk)$.
\end{tabular}
$$
However, unlike the classical case, this pairing is not expected to be perfect in general, cf.\ \cite[Theorem 1.1 and Conjecture 6.9]{Arm1}.
Besides, the action of Hecke operators on the $t$-expansion of $f$ is not well-understood.
Therefore our Sturm-type bound for Drinfeld modular forms does not directly provide a finite family of Hecke operators generating the $\CC_\infty$-algebra  $\mathbf{T}_{k,m}^{(\ell)}(\nfk)$.
\end{rem}

\subsubsection*{Acknowledgements}
The first author was partially supported by the French ANR program through project FLAIR (ANR-17-CE40-0012) and the French "Investissements d'Avenir" program through project ISITE-BFC (contract ANR-lS-IDEX-OOOB). She warmly thanks the National Center for Theoretical Sciences in Taipei and Hsinchu, where she carried out this work while visiting, for the financial support and excellent working conditions.

The second author is supported by the National Center for Theoretical Sciences and the Ministry of Science and Technology (grant no.\ 107-2628-M-007-004-MY4).

Both authors would like to thank the organizers of the stimulating conference \lq\lq New developments in the theory of modular forms over function fields \rq\rq\ held in Pisa, 2018.

\end{document}